\newtheorem{theorem}{Theorem}[section]
\newtheorem{lem}[theorem]{Lemma} 
\newtheorem{prop}[theorem]{Proposition}
\newtheorem{coro}[theorem]{Corollary} 
\theoremstyle{definition}
\newtheorem{rem}[theorem]{Remark}
\newcommand{\N}{\mathbb N}
\newcommand{\R}{\mathbb R}
\newcommand{\C}{\mathbb C}
\newcommand{\T}{\mathbb T}
\newcommand{\Bc}{\mathcal B}
\newcommand{\Lc}{\mathcal L}
\newcommand{\Rc}{\mathcal R}
\newcommand{\Mc}{\mathcal M}
\newcommand{\Fc}{\mathcal F}
\newcommand{\im}[1]{\mbox{Im} \ #1} 
\newcommand{\scal}[1]{\left\langle #1 \right\rangle} 
\newcommand{\name}{$\underline{\qquad \qquad}$} 
\newcommand{\defendproof}{\hfill $\Box$} 
\begin{document}
\title{\sc Strichartz estimates for the fractional Schr\"odinger and wave equations on compact manifolds without boundary}
\author{ {\sc Van Duong Dinh}} 
\date{ }
\maketitle

\begin{abstract}
We firstly prove Strichartz estimates for the fractional Schr\"odinger equations on $\R^d, d \geq 1$ endowed with a smooth bounded metric $g$. We then prove Strichartz estimates for the fractional Schr\"odinger and wave equations on compact Riemannian manifolds without boundary $(M,g)$. This result extends the well-known Strichartz estimate for the Schr\"odinger equation given in \cite{BGT}. We finally give applications of Strichartz estimates for the local well-posedness of the pure power-type nonlinear fractional Schr\"odinger and wave equations posed on $(M,g)$. 
\end{abstract}

\noindent \textbf{Keywords:} {\it Nonlinear fractional Schr\"odinger equation; Strichartz estimates; WKB approximation; pseudo-differential calculus.}



\section{Introduction and main results}
\setcounter{equation}{0}
This paper is concerned with the Strichartz estimates for the generalized fractional Schr\"odinger equation on Riemannian manifold $(M,g)$, namely
\[
\left\{
\begin{array}{ccl}
i\partial_t u + \Lambda_g^\sigma u &=& 0, \\
u(0) &=& u_0, 
\end{array}
\right.
\]
where $\sigma \in (0,\infty)\backslash \{1\}$ and $\Lambda_g=\sqrt{-\Delta_g}$ with $\Delta_g$ is the Laplace-Beltrami operator associated to the metric $g$. When $\sigma \in (0,2)\backslash\{1\}$, it corresponds to the fractional Schr\"odinger equation discorved by N. Laskin (see \cite{Laskin2000, Laskin2002}). When $\sigma \geq 2$, it can be seen as a generalization of the Schr\"odinger equation $\sigma =2$ (see e.g. \cite{Cazenave, Tao}) or the fourth-order Schr\"odinger equation $\sigma =4$ (see e.g. \cite{Pausaderradial, Pausadercubic}).  \newline
\indent The Strichartz estimates play an important role in the study of nonlinear fractional Schr\"odinger equation on $\R^d$ (see e.g. \cite{Cazenave, Tao, Pausaderradial, GuoWang, ChoHwangKwonLee, ChoOzawaXia, HongSire, Dinh} and references therein). Let us recall the local in time Strichartz estimates for the fractional Schr\"odinger operator on $\R^d$. For $\sigma \in (0,\infty)\backslash\{1\}$ and $I \subset \R$ a bounded interval, one has
\begin{align}
\|e^{it\Lambda^\sigma} u_0\|_{L^p(I,L^q(\R^d))} \leq C \|u_0\|_{H^{\gamma_{pq}}(\R^d)}, \label{strichartz estimates on Rd}
\end{align}
where $\Lambda=\sqrt{-\Delta}$ with $\Delta$ is the free Laplace operator on $\R^d$ and 
\[
\gamma_{pq}=\frac{d}{2}-\frac{d}{q}-\frac{\sigma}{p}
\] 
provided that $(p,q)$ satisfies the fractional admissible condition, namely 
\begin{align}
p\in [2,\infty], \quad q \in [2,\infty), \quad (p,q,d) \ne (2,\infty,2), \quad \frac{2}{p}+\frac{d}{q} \leq \frac{d}{2}. \nonumber 
\end{align}
We refer to \cite{Dinh} (see also \cite{ChoOzawaXia}) for a general version of these Strichartz estimates on $\R^d$. \newline
\indent The main purpose of this paper is to prove Strichartz estimates for the fractional Schr\"odinger equation on $\R^d$ equipped with a smooth bounded metric and on a compact manifold without boundary $(M,g)$. \newline
\indent Let us firstly consider $\R^d$ endowed with a smooth Riemannian metric $g$. Let $g(x)=(g_{jk}(x))_{j,k=1}^d$ be a metric on $\R^d$, and denote $G(x)=(g^{jk}(x))_{j,k=1}^d:=g^{-1}(x)$. The Laplace-Beltrami operator associated to $g$ reads
\[
\Delta_g= \sum_{j,k=1}^{d}|g(x)|^{-1} \partial_j \left( g^{jk}(x) |g(x)| \partial_k\right),
\]
where $|g(x)|:=\sqrt{\det g(x)}$ and denote $P:=-\Delta_g$ the self-adjoint realization of $-\Delta_g$. Recall that the principal symbol of $P$ is 
\begin{align}
p(x,\xi)=\xi^t G(x)\xi = \sum_{j,k=1}^d g^{jk}(x)\xi_j \xi_k. \nonumber 
\end{align}
In this paper, we assume that $g$ satisfies the following assumptions.
\begin{enumerate}
\item There exists $C>0$ such that for all $x,\xi\in \R^d$, 
\begin{align}
C^{-1}|\xi|^2 \leq \sum_{j,k=1}^{d}g^{jk}(x)\xi_j\xi_k \leq C |\xi|^2. \label{assump elliptic}
\end{align}
\item For all $\alpha \in \N^d$, there exists $C_\alpha>0$ such that for all $x \in \R^d$,
\begin{align}
| \partial^\alpha g^{jk}(x) | \leq C_\alpha, \quad j,k \in \{1,...d\}. \label{assump bounded metric}
\end{align}
\end{enumerate}
We firstly note that the elliptic assumption $(\ref{assump elliptic})$ implies that $|g(x)|$ is bounded from below and above by positive constants. This shows that the space $L^q(\R^d, d\text{vol}_g), 1 \leq q \leq \infty$ where $d\text{vol}_g= |g(x)| dx$ and the usual Lebesgue space $L^q(\R^d)$ coincide. Thus in the sequel, the notation $L^q(\R^d)$ stands for either $L^q(\R^d,d\text{vol}_g)$ or the usual Lebesgue space $L^q(\R^d)$. It is well-known that under the assumptions $(\ref{assump elliptic})$ and $(\ref{assump bounded metric})$, the Strichartz estimates $(\ref{strichartz estimates on Rd})$ may fail at least for the Schr\"odinger equation (see \cite[Appendix]{BGT}) and in this case (i.e. $\sigma=2$) one has a loss of derivatives $1/p$ that is the right hand side of $(\ref{strichartz estimates on Rd})$ is replaced by $\|u_0\|_{H^{1/p}(\R^d)}$. Here we extend the result of Burq-G\'erard-Tzvetkov to the more general setting, i.e. $\sigma \in (0,\infty) \backslash \{1\}$ and obtain Strichartz estimates with a ``loss'' of derivatives $(\sigma-1)/p$ when $\sigma \in (1,\infty)$ and without ``loss'' when $\sigma \in (0,1)$. Throughout this paper, the ``loss'' compares to $(\ref{strichartz estimates on Rd})$.
\begin{theorem} \label{theorem local strichartz fractional schrodinger on manifolds}
Consider $\R^d, d\geq 1$ equipped with a smooth metric $g$ satisfying $(\ref{assump elliptic}), (\ref{assump bounded metric})$ and let $I \subset \R$ a bounded interval. If $\sigma \in (1,\infty)$, then for all $(p,q)$ fractional admissible, there exists $C>0$ such that for all $u_0 \in H^{\gamma_{pq}+(\sigma-1)/p}(\R^d)$,
\begin{align}
\|e^{it\Lambda_g^\sigma} u_0\|_{L^p(I,L^q(\R^d))} \leq C \|u_0\|_{H^{\gamma_{pq}+(\sigma-1)/p}(\R^d)}, \label{homogeneous strichartz fractional schrodinger on manifolds}
\end{align} 
where $\Lambda_g:=\sqrt{P}$ and $\|u\|_{H^\gamma(\R^d)}:=\|\scal{\Lambda_g}^\gamma u\|_{L^2(\R^d)}$. If $\sigma \in (0,1)$, then $(\ref{homogeneous strichartz fractional schrodinger on manifolds})$ holds with $\gamma_{pq}+(\sigma-1)/p$ is replaced by $\gamma_{pq}$. 
\end{theorem}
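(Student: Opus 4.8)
The plan is to adapt the Littlewood--Paley plus WKB-parametrix scheme of \cite{BGT} to the fractional dispersion relation $|\xi|_g^\sigma$. First I would reduce to a frequency-localized estimate. Decompose $u_0 = \psi_0(P)u_0 + \sum_h \psi(h^2 P)u_0$, the sum over dyadic $h\in(0,1]$ and $\psi\in C_c^\infty((0,\infty))$. Since $e^{it\Lambda_g^\sigma}$ commutes with $\psi_0(P)$, whose kernel is rapidly decaying under $(\ref{assump elliptic})$--$(\ref{assump bounded metric})$, the operator $\psi_0(P)$ maps $L^2$ into $L^q$ for $q\in[2,\infty)$, and the low-frequency term is controlled on the bounded interval $I$ by H\"older in $t$ and the unitarity of $e^{it\Lambda_g^\sigma}$ on $L^2$. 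For the high-frequency part I would use the Littlewood--Paley square function estimate for $P=-\Delta_g$ on $\R^d$ (available since $e^{-tP}$ enjoys Gaussian heat-kernel bounds) together with Minkowski's inequality, legitimate because $p,q\ge 2$. This leaves, uniformly in dyadic $h$ and for $u_0$ with frequencies of size $\sim h^{-1}$, the estimate
\begin{align}
\| e^{it\Lambda_g^\sigma} u_0 \|_{L^p(I,L^q(\R^d))} \leq C\, h^{-\gamma_{pq}-(\sigma-1)/p}\, \| u_0 \|_{L^2(\R^d)}, \nonumber
\end{align}
with $(\sigma-1)/p$ deleted when $\sigma\in(0,1)$; the $H^{\gamma_{pq}+(\sigma-1)/p}$ norm is then recovered by the usual $\ell^2$ summation over $h$.

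Next I would rescale. For $\sigma>1$ I split $I$ into $\sim h^{1-\sigma}$ subintervals of length $\sim h^{\sigma-1}$, while for $\sigma\in(0,1)$ the single interval $I$ will do, since then $h^{\sigma-1}\gtrsim|I|$ for $h$ small. On a subinterval of length $\lesssim h^{\sigma-1}$ the change of variables $x=hy$, $t=h^\sigma\theta$ conjugates $e^{it\Lambda_g^\sigma}$ to $e^{i\theta\Lambda_{g_h}^\sigma}$ with $g_h(y):=g(hy)$; by $(\ref{assump elliptic})$--$(\ref{assump bounded metric})$ this $g_h$ is uniformly elliptic with the same constant as $g$ and satisfies $|\partial_y^\alpha g_h^{jk}|\leq C_\alpha h^{|\alpha|}$, i.e.\ it is ``almost flat'' in the sense of uniformly small derivatives of order $\ge 1$. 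The subinterval in $t$ becomes one of length $\lesssim h^{-1}$ in $\theta$, and since all the Jacobians are exact powers of $h$, the per-subinterval estimate is equivalent to the scale-invariant bound
\begin{align}
\| e^{i\theta\Lambda_{g_h}^\sigma} v_0 \|_{L^p(\{|\theta|\leq c/h\},\, L^q(\R^d))} \leq C\, \| v_0 \|_{L^2(\R^d)} \nonumber
\end{align}
for $v_0 = \psi(-\Delta_{g_h})v_0$ of unit frequency, uniformly in $h$.

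To prove this I would build a WKB parametrix $J_h(\theta)$ for $e^{i\theta\Lambda_{g_h}^\sigma}\psi(-\Delta_{g_h})$ valid on the whole range $|\theta|\leq c/h$. The phase $\phi(\theta,y,\eta)$ solves $\partial_\theta\phi = a_\sigma(y,\nabla_y\phi)$, $\phi(0,y,\eta)=y\cdot\eta$, with $a_\sigma(y,\eta)=(g_h^{jk}(y)\eta_j\eta_k)^{\sigma/2}$, smooth on $\{\eta\neq 0\}$ for every $\sigma>0$ and hence near $\operatorname{supp}\psi$. Because $g_h$ is almost flat (its curvature is $O(h^2)$), the Hamiltonian flow of $a_\sigma$ develops no caustics for $|\theta|\lesssim h^{-1}$, so $\phi$ exists as a non-degenerate generating function on that range; one then solves the amplitude transport equations, obtaining $e^{i\theta\Lambda_{g_h}^\sigma}\psi(-\Delta_{g_h}) = J_h(\theta)+R_h(\theta)$ with $\|R_h(\theta)\|_{L^2\to L^2}=O(h^\infty)$ uniformly on $|\theta|\leq c/h$ (the remainder being harmless, once interpolated with a crude $L^2\to L^\infty$ bound and fed through H\"older in time). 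The dispersive estimate then comes from stationary phase on the kernel of $J_h(\theta)J_h(\theta')^*$, an oscillatory integral in $\eta$ with phase $\phi(\theta,y,\eta)-\phi(\theta',y',\eta)$ whose $\eta$-Hessian is $(\theta-\theta')\,\nabla_\eta^2 a_\sigma$ plus lower-order terms; $\nabla_\eta^2 a_\sigma$ is non-degenerate exactly because $\sigma\notin\{0,1\}$ --- in coordinates where $G$ is the identity and $\eta$ is a unit vector its eigenvalues are $\sigma$ (tangentially, multiplicity $d-1$) and $\sigma(\sigma-1)$ (radially). This gives $\|J_h(\theta)J_h(\theta')^*\|_{L^1\to L^\infty}\lesssim(1+|\theta-\theta'|)^{-d/2}$ for $\theta,\theta'$ in the admissible range.

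Feeding this dispersive bound and $L^2$ conservation into the abstract (Keel--Tao type) Strichartz argument yields the estimate for $J_h(\theta)v_0$ on $\{|\theta|\leq c/h\}$ for $(p,q)$ on the line $2/p+d/q=d/2$ --- the excluded endpoint being precisely $(p,q,d)=(2,\infty,2)$ --- and Bernstein's inequality for the frequency-localized $v_0$ extends it to the whole fractional-admissible range $2/p+d/q\le d/2$. Adding back the $O(h^\infty)$ error and undoing the scaling gives the per-subinterval bound $\|e^{it\Lambda_g^\sigma}u_0\|_{L^p(I_k,L^q)}\lesssim h^{-\gamma_{pq}}\|u_0\|_{L^2}$ for $u_0$ of frequency $\sim h^{-1}$. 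For $\sigma\in(0,1)$ a single subinterval suffices and this is already the asserted estimate. For $\sigma>1$, summing in $\ell^p_t$ over the $\sim h^{1-\sigma}$ subintervals multiplies the bound by $(h^{1-\sigma})^{1/p}$, which is exactly the claimed loss $h^{-(\gamma_{pq}+(\sigma-1)/p)}$. I expect the construction and control of the WKB parametrix over the long rescaled time interval $|\theta|\leq c/h$ --- verifying that almost-flatness postpones caustics to time $\sim h^{-1}$ and that the stationary-phase estimate persists throughout --- to be the main technical obstacle, the rest being scaling bookkeeping and standard harmonic analysis.
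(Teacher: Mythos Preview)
Your overall strategy---Littlewood--Paley reduction, a WKB dispersive estimate at frequency $\sim h^{-1}$, and for $\sigma>1$ summation over $\sim h^{1-\sigma}$ time subintervals of length $\sim h^{\sigma-1}$---is exactly the paper's. The difference is in how the parametrix step is packaged. You rescale space by $x=hy$ to land in an almost-flat metric $g_h$ at unit frequency and then need a WKB construction valid over the \emph{long} rescaled interval $|\theta|\lesssim h^{-1}$; the paper instead keeps the original coordinates, works with the \emph{semiclassical} propagator $e^{ith^{-1}(h\Lambda_g)^\sigma}\varphi(h^2P)$ on a \emph{fixed short} interval $|t|\le t_0$ independent of $h$, and obtains the dispersive bound $h^{-d}(1+|t|h^{-1})^{-d/2}$ there. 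After rescaling time alone this gives the Strichartz estimate on intervals of size $h^{\sigma-1}$. The two pictures are equivalent under your change of variables (your $\theta$ is essentially the paper's $t/h$), so your route is correct, but the paper's is technically lighter: the Hamilton--Jacobi equation is solved once for small fixed time by the standard argument, and no uniform long-time control of phase or transport amplitudes is required---precisely the obstacle you flag as the main one is bypassed. The paper also makes explicit a device you are using implicitly: since $(h\Lambda_g)^\sigma$ has no convenient symbol, one replaces it on the range of $\varphi(h^2P)$ by $\psi(h^2P)$ with $\psi(\lambda)=\tilde\varphi(\lambda)\lambda^{\sigma/2}\in C_c^\infty$, which \emph{is} a genuine semiclassical pseudodifferential operator and hence amenable to the action-on-FIO calculus. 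Your identification of the nondegeneracy of $\nabla_\eta^2 a_\sigma$ (eigenvalues $\sigma$ and $\sigma(\sigma-1)$, nonzero exactly when $\sigma\notin\{0,1\}$) as the source of the $(1+|\theta|)^{-d/2}$ decay is the same stationary-phase computation the paper carries out.
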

The proof of $(\ref{homogeneous strichartz fractional schrodinger on manifolds})$ is based on the WKB approximation which is similar to \cite{BGT}. Since we are working on manifolds, a good way is to decompose the semi-classical fractional Schr\"odinger operator, namely $e^{ith^{-1}(h\Lambda_g)^\sigma}$, in the localized frequency, i.e. $e^{ith^{-1}(h\Lambda_g)^\sigma}\varphi(h^2P)$ for some $\varphi \in C^\infty_0(\R \backslash \{0\})$. The main difficulty is that in general we do not have the exact form of the semi-classical fractional Laplace-Beltrami operator in order to use the usual construction in \cite{BGT}. To overcome this difficulty we write $e^{ith^{-1}(h\Lambda_g)^\sigma}\varphi(h^2P)$ as $e^{-ith^{-1}\psi(h^2P)} \varphi(h^2P)$ where $\psi(\lambda)= \tilde{\varphi}(\lambda) \sqrt{\lambda}^\sigma$ for some $\tilde{\varphi} \in C^\infty(\R \backslash \{0\})$ satisfying $\tilde{\varphi}=1$ near $\text{supp}(\varphi)$. We then approximate $\psi(h^2P)$ in terms of pseudo-differential operators and use the action of pseudo-differential operators on Fourier integral operators in order to construct an approximation for $e^{-ith^{-1}\psi(h^2P)} \varphi(h^2P)$. This approximation gives dispersive estimates for $e^{ith^{-1}(h\Lambda_g)^\sigma}\varphi(h^2P)$ on some small time interval independent of $h$. After scaling in time, we obtain Strichartz estimates without ``loss'' of derivatives over time intervals of size $h^{\sigma-1}$. When $\sigma \in (1,\infty)$, we can cumulate the bounded interval $I$ by intervals of size $h^{\sigma-1}$ and get estimates with $(\sigma-1)/p$ loss of derivatives. In the case $\sigma \in (0,1)$, we can bound the estimates over time intervals of size 1 by the ones of size $h^{\sigma-1}$ and have the same Strichartz estimates as on $\R^d$. It is not a surprise that we recover the same Strichartz estimates as in the free case for $\sigma \in (0,1)$ since $e^{it\Lambda_g^\sigma}$ has micro-locally the finite propagation speed property which is similar to $\sigma =1$ for the (half) wave equation. Intuitively, if we consider the free Hamiltonian $H(x,\xi)=|\xi|^\sigma$, then the spatial component of geodesic flow reads $x(t)=x(0)+t \sigma \xi |\xi|^{\sigma-2}$. After a time $t$, the distance $d(x(t),x(0)) \sim t|\xi|^{\sigma-1} \lesssim t$ if $\sigma -1 \leq 0$ and $|\xi|\geq 1$. By decomposing the solution to $i\partial_t u -\Lambda^\sigma u=0$ as $u = \sum_{k\geq 0} u_k$ where $u_k=\varphi(2^{-k}D) u$ is localized near $|\xi| \sim 2^k \geq 1$, we see that after a time $t$, all components $u_k$ have traveled at a distance $t$ from the data $u_k(0)$. \newline
\indent When $\R^d$ is replaced by a compact Riemannian manifold without boundary $(M,g)$, Burq-G\'erard-Tzvetkov established in \cite{BGT} a Strichartz estimate with loss of $1/p$ derivatives for the Schr\"odinger equation, namely
\begin{align}
\|e^{-it\Delta_g} u_0\|_{L^p(I, L^q(M))} \leq C \|u_0\|_{H^{1/p}(M)}, \label{strichartz estimates schrodinger compact}
\end{align}
where $(p,q)$ is a Schr\"odinger admissible pair, i.e.
\[
p \in [2,\infty], \quad q \in [2,\infty), \quad (p,q,d) \ne (2, \infty,2), \quad \frac{2}{p}+\frac{d}{q} =\frac{d}{2}.
\]
When $M$ is the flat torus $\T^d$, Bourgain showed in \cite{Bourgain}, \cite{Bourgainexponential} some estimates related to $(\ref{strichartz estimates schrodinger compact})$ by means of the Fourier series for the Schr\"odinger equation. A direct consequence of these estimates is
\[
\|e^{-it\Delta_g} u_0\|_{L^4(\T \times \T^d)} \leq C \|u_0\|_{H^\gamma(T^d)}, \quad \gamma >\frac{d}{4}-\frac{1}{2}.
\]
Let us now consider the linear fractional Schr\"odinger equation posed on a compact Riemannian manifold without boundary $(M,g)$, namely
\begin{eqnarray}
\left\{
\begin{array}{ccl}
i\partial_t u(t,x) + \Lambda_g^\sigma u(t,x) &=& F(t,x), \quad (t, x) \in I \times M, \\
u(0,x) &=& u_0(x), \quad x\in M,
\end{array}
\right.
\label{linear fractional schrodinger equation compact no boundary}
\end{eqnarray}
where $\Lambda_g:=\sqrt{-\Delta_g}$ with $\Delta_g$ is the Laplace-Beltrami operator on $(M,g)$. We have the following result.
\begin{theorem} \label{theorem strichartz estimate compact no boundary} Consider $(M,g)$ a smooth compact boundaryless Riemannian manifold of dimension $d \geq 1$ and let $I \subset \R$ a bounded interval. If $\sigma \in (1,\infty)$, then for all $(p,q)$ fractional admissible, there exists $C>0$ such that for all $u_0 \in H^{\gamma_{pq}+(\sigma-1)/p}(M)$, 
\begin{align}
\|e^{it\Lambda_g^\sigma}u_0\|_{L^p(I,L^q(M))} \leq C \|u_0\|_{H^{\gamma_{pq}+(\sigma-1)/p}(M)}. \label{homogeneous strichartz estimate compact no boundary}
\end{align}
Moreover, if $u$ is a (weak) solution to $(\ref{linear fractional schrodinger equation compact no boundary})$, then
\begin{align}
\|u\|_{L^p(I, L^q(M))} \leq C \Big(\|u_0\|_{H^{\gamma_{pq}+(\sigma-1)/p}(M)} + \|F\|_{L^1(I, H^{\gamma_{pq}+(\sigma-1)/p}(M))}\Big). \label{homogeneous strichartz fractional schrodinger with source}
\end{align}
If $\sigma \in (0,1)$, then $(\ref{homogeneous strichartz estimate compact no boundary})$ and $(\ref{homogeneous strichartz fractional schrodinger with source})$ hold with $\gamma_{pq}$ in place of $\gamma_{pq}+(\sigma-1)/p$.
\end{theorem}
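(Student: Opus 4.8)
The plan is to deduce Theorem~\ref{theorem strichartz estimate compact no boundary} from Theorem~\ref{theorem local strichartz fractional schrodinger on manifolds} by a localization-and-transfer argument, since a compact manifold looks, microlocally in each coordinate chart, like $\R^d$ equipped with a bounded smooth metric. First I would fix a finite atlas $(U_\kappa, V_\kappa, \chi_\kappa)$ of $M$ together with a subordinate partition of unity $\sum_\kappa \phi_\kappa^2 = 1$ with $\phi_\kappa \in C^\infty_0(V_\kappa)$, and a slightly larger cutoff $\tilde\phi_\kappa \in C^\infty_0(V_\kappa)$ equal to $1$ near $\mathrm{supp}(\phi_\kappa)$. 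Via $\chi_\kappa$ the operator $\Delta_g$ pushes forward to a second-order elliptic operator on an open set of $\R^d$; extending its coefficients off $\chi_\kappa(V_\kappa)$ to a globally defined metric on $\R^d$ satisfying $(\ref{assump elliptic})$ and $(\ref{assump bounded metric})$ produces an operator $P_\kappa$ on $\R^d$ to which Theorem~\ref{theorem local strichartz fractional schrodinger on manifolds} applies. The strategy is then to write, for a solution $u(t) = e^{it\Lambda_g^\sigma} u_0$,
\[
\|u\|_{L^p(I,L^q(M))}^q \lesssim \sum_\kappa \|\phi_\kappa u\|_{L^p(I,L^q(\R^d))}^q,
\]
and to compare $\phi_\kappa e^{it\Lambda_g^\sigma} u_0$ with $e^{it\Lambda_{g_\kappa}^\sigma}(\phi_\kappa u_0)$ pushed to $\R^d$.

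The mechanism for this comparison is the standard commutator/parametrix argument. Decompose $u_0 = \sum_{h} \varphi(h^2 P) u_0$ into Littlewood--Paley pieces (dyadic $h = 2^{-j}$) plus a smooth remainder, and on each frequency block use that $e^{it\Lambda_g^\sigma}\varphi(h^2 P)$ and $e^{it\Lambda_{g_\kappa}^\sigma}\varphi(h^2 P_\kappa)$ have, by the WKB construction underlying Theorem~\ref{theorem local strichartz fractional schrodinger on manifolds}, parametrices given by Fourier integral operators whose canonical relations agree on the region of phase space over $\mathrm{supp}(\tilde\phi_\kappa)$, because $P$ and $P_\kappa$ have the same symbol there. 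Consequently $\phi_\kappa\big(e^{it\Lambda_g^\sigma} - e^{it\Lambda_{g_\kappa}^\sigma}\big)\tilde\phi_\kappa \varphi(h^2 P)$ is smoothing, with operator norm $L^2 \to L^2$ bounded by $C_N h^N$ on time intervals of size $h^{\sigma-1}$ (the time scale on which the parametrix is valid), and $(1-\tilde\phi_\kappa)\varphi(h^2 P)\phi_\kappa$ is likewise $O(h^N)$ by pseudolocality. Summing the resulting per-block Strichartz bounds with the $h^{\mp(\sigma-1)/p}$ weights coming from cumulating $I$ out of intervals of length $h^{\sigma-1}$ (gaining a loss $(\sigma-1)/p$ when $\sigma>1$, no loss when $\sigma<1$, exactly as in Theorem~\ref{theorem local strichartz fractional schrodinger on manifolds}), and using the square-function/Littlewood--Paley characterization of $H^{\gamma}(M)$ together with the Bernstein-type gain on each block to absorb the derivative budget, yields $(\ref{homogeneous strichartz estimate compact no boundary})$. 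The inhomogeneous estimate $(\ref{homogeneous strichartz fractional schrodinger with source})$ then follows from $(\ref{homogeneous strichartz estimate compact no boundary})$ by the Duhamel formula $u(t) = e^{it\Lambda_g^\sigma}u_0 + i\int_0^t e^{i(t-s)\Lambda_g^\sigma} F(s)\,ds$, Minkowski's inequality in $s$, and the energy conservation $\|e^{it\Lambda_g^\sigma}\|_{H^\gamma \to H^\gamma} = 1$ — note this uses only the $TT^*$-free, $L^1_t$-in-the-source form, so no endpoint subtleties arise beyond those already present in the homogeneous estimate.

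The main obstacle I anticipate is making the chart-transfer rigorous at the level of the FIO parametrices: one must check that the extension of the metric from $\chi_\kappa(V_\kappa)$ to all of $\R^d$ can be done while (i) preserving ellipticity and bounded-geometry bounds $(\ref{assump elliptic})$--$(\ref{assump bounded metric})$ globally, and (ii) guaranteeing that the discrepancy between the true propagator on $M$ and the transplanted one on $\R^d$ is supported — microlocally, on the relevant $h^{\sigma-1}$ time scale — away from $\mathrm{supp}(\phi_\kappa)$, so that the error is genuinely $O(h^\infty)$ after cutting off by $\phi_\kappa$ on both sides. This is essentially the finite-speed-of-propagation property for the Hamiltonian flow of $|\xi|_g^\sigma$: on a time interval of length $c\,h^{\sigma-1}$ with $c$ small, a bicharacteristic starting over $\mathrm{supp}(\phi_\kappa)$ with frequency $\sim h^{-1}$ moves a spatial distance $\lesssim c$, hence stays inside $V_\kappa$ if $c$ is chosen smaller than the distance from $\mathrm{supp}(\phi_\kappa)$ to $\partial V_\kappa$; this is exactly the same localization already exploited in the proof of Theorem~\ref{theorem local strichartz fractional schrodinger on manifolds} and recorded in the introduction. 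Handling the very low frequencies (the $h \gtrsim 1$ part, i.e.\ $\varphi_0(P)u_0$) is routine since there $e^{it\Lambda_g^\sigma}$ maps $L^2(M)$ boundedly into $L^\infty(M)$ by Sobolev embedding and spectral calculus on the compact manifold, so it contributes no loss.
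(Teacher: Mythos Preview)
Your overall plan is sound, but it is organized differently from the paper's, and the detour you take through a comparison of two propagators is not what the paper does. You propose to \emph{deduce} Theorem~\ref{theorem strichartz estimate compact no boundary} from Theorem~\ref{theorem local strichartz fractional schrodinger on manifolds} by showing that $\phi_\kappa\big(e^{it\Lambda_g^\sigma}-e^{it\Lambda_{g_\kappa}^\sigma}\big)\tilde\phi_\kappa\varphi(h^2P)$ is $O(h^\infty)$ and then quoting the $\R^d$ Strichartz estimate for $e^{it\Lambda_{g_\kappa}^\sigma}$. The paper does \emph{not} invoke Theorem~\ref{theorem local strichartz fractional schrodinger on manifolds} for this; instead it proves the dispersive estimate $(\ref{dispersive fractional schrodinger on compact manifolds without boundary})$ directly on $M$ and then reruns the $TT^\star$ + Littlewood--Paley reduction (Proposition~\ref{prop reduction strichartz fractional schrodinger compact no boundary}, which is the $M$-analogue of Proposition~\ref{prop reduction of theorem local strichartz fractional schrodinger on manifolds}). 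Concretely, after localizing to a chart with $\phi_\kappa$ and replacing $\varphi(h^2P)\phi_\kappa$ by $\tilde\phi_\kappa Op_h^\kappa(a_\kappa)\phi_\kappa$ via Proposition~\ref{prop parametrix f compact manifolds}, the paper applies Duhamel to the $M$-evolution $u(t)=e^{ith^{-1}\psi(h^2P)}\tilde\phi_\kappa Op_h^\kappa(a_\kappa)\phi_\kappa u_0$ and shows it equals $\tilde\phi_\kappa J_N^\kappa(t)\phi_\kappa u_0$ plus an $O_{\Lc(L^2)}(h^{N-1})$ remainder, where $J_N^\kappa$ is the chart-pullback of the WKB FIO built in Theorem~\ref{theorem wkb fractional schrodinger equation} for the globally extended operator $(\ref{global extending operator})$; the error terms are controlled by the micro-local finite-propagation Lemma~\ref{lem pseudo local} and the parametrix $(\ref{parametrix of psi compact manifold})$ for $\psi(h^2P)\phi_\kappa'$. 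The dispersive bound on $M$ then comes straight from $(\ref{dispersive estimates application on compact})$.

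The practical point is that your comparison step does not follow just from ``$P=P_\kappa$ on $\mathrm{supp}(\tilde\phi_\kappa)$'': since $\Lambda_g^\sigma$ and $\Lambda_{g_\kappa}^\sigma$ are nonlocal for $\sigma\neq 2$, you would still need the parametrix for $\psi(h^2P)\phi_\kappa$ and the finite-propagation lemma to justify that the two FIO approximations coincide on the chart. Once those ingredients are in place you already have a parametrix for the $M$-propagator itself, and the dispersive estimate follows without ever introducing the second propagator $e^{it\Lambda_{g_\kappa}^\sigma}$---so the paper's route is shorter and avoids the transplant. Your treatment of the inhomogeneous estimate (Duhamel plus Minkowski), of the low-frequency piece $\varphi_0(P)u_0$, and of the cumulation over intervals of size $h^{\sigma-1}$ matches the paper exactly.
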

\begin{rem} \label{rem homogeneous strichartz fractional schrodinger compact no boundary}
\begin{itemize}
\item[1.] Note that the exponents $\gamma_{pq}+(\sigma-1)/p=d/2-d/q-1/p$ in the right hand side of $(\ref{homogeneous strichartz estimate compact no boundary})$ and $\gamma_{pq}=d/2-d/q-\sigma/p$ in the case of $\sigma \in (0,1)$ correspond to the gain of $1/p$ and $\sigma/p$ derivatives respectively compared with the Sobolev embedding.
\item[2.] When $M=\T$ and $\sigma \in (1,2)$, the authors in \cite{DemirbasErdoganTzirakis} established estimates related to $(\ref{homogeneous strichartz estimate compact no boundary})$, namely
\begin{align}
\|e^{it\Lambda_g^\sigma}u_0\|_{L^4(\T \times \T)} \leq C \|u_0\|_{H^\gamma(\T)}, \quad \gamma >\frac{2-\sigma}{8}. \label{strichartz estimate on torus}
\end{align}
\item[3.] Using the same argument as in \cite{BGT}, we see that the endpoint homogeneous Strichartz estimate $(\ref{homogeneous strichartz estimate compact no boundary})$ are sharp on $\mathbb{S}^d, d\geq 3$. Indeed, let $u_0$ be a zonal spherical harmonic associated to eigenvalue $\lambda = k(d+k-1)$. One has (see e.g. \cite{Soggeoscillatory}) that for $\lambda \gg 1$,
\[
\|u_0\|_{L^q(\mathbb{S}^d)} \sim \sqrt{\lambda}^{s(q)}, \quad s(q)=\frac{d-1}{2}-\frac{d}{q} \quad \text{if } \frac{2(d+1)}{d-1} \leq q \leq \infty.
\]
Moreover, the above estimates are sharp. Therefore,
\[
\|e^{it\Lambda_g^\sigma} u_0\|_{L^2(I, L^{2^\star}(\mathbb{S}^d))} = \|e^{it\sqrt{\lambda}^\sigma} u_0\|_{L^2(I, L^{2^\star}(\mathbb{S}^d))} \sim \sqrt{\lambda}^{s(2^\star)},
\]
where $2^\star=2d/(d-2)$ and $s(2^\star)=1/2$. This gives the optimality of $(\ref{homogeneous strichartz estimate compact no boundary})$ since $\gamma_{22^\star}+(\sigma-1)/2=1/2$. 
\end{itemize}
\end{rem}
A first application of Theorem $\ref{theorem strichartz estimate compact no boundary}$ is the Strichartz estimates for the fractional wave equation posed on $(M,g)$. Let us consider the following linear fractional wave equation posed on $(M,g)$,
\begin{eqnarray}
\left\{
\begin{array}{ccl}
\partial^2_t v(t,x) + \Lambda_g^{2\sigma} v(t,x) &=& G(t,x), \quad (t, x) \in I \times M, \\
v(0,x) = v_0(x),& & \partial_tv(0,x)= v_1(x), \quad x\in M.
\end{array}
\right.
\label{linear fractional wave equation compact no boundary}
\end{eqnarray}
We refer to \cite{ChenHolm} or \cite{Herrmann} for the introduction of fractional wave equations which arise in Physics. We also refer to \cite{Dinh} for Strichartz estimates of the linear fractional wave equations on $\R^d$. As for the linear fractional Schr\"odinger equations, we obtain estimates with a loss of derivatives $(\sigma-1)/p$ when $\sigma\in (1,\infty)$ and with no loss when $\sigma \in (0,1)$. Precisely, we have the following result. 
\begin{coro} \label{coro strichartz estimates wave compact no boundary}
Consider $(M,g)$ a smooth compact boundaryless Riemannian manifold of dimension $d \geq 1$. Let $I \subset \R$ be a bounded interval and $v$ a (weak) solution to $(\ref{linear fractional wave equation compact no boundary})$. If $\sigma \in (1,\infty)$, then for all $(p,q)$ fractional admissible, there exists $C>0$ such that for all $(v_0,v_1) \in H^{\gamma_{pq}+(\sigma-1)/p}(M) \times H^{\gamma_{pq}+(\sigma-1)/p-\sigma}(M)$,
\begin{align}
\|v\|_{L^p(I, L^q(M))} \leq C \Big(\|[v](0)\|_{H^{\gamma_{pq}+(\sigma-1)/p}(M)}  +\|G\|_{L^1(I, H^{\gamma_{pq}+(\sigma-1)/p-\sigma}(M))}\Big), \label{homogeneous strichartz fractional wave with source}
\end{align}
where 
\[
\|[v](0)\|_{H^{\gamma_{pq}+(\sigma-1)/p}(M)}:=\|v_0\|_{H^{\gamma_{pq}+(\sigma-1)/p}(M)}+\|v_1\|_{H^{\gamma_{pq}+(\sigma-1)/p-\sigma}(M)}.
\]
If $\sigma \in (0,1)$, then $(\ref{homogeneous strichartz fractional wave with source})$ holds with $\gamma_{pq}+(\sigma-1)/p$ is replaced by $\gamma_{pq}$.
\end{coro}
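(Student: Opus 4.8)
The plan is to reduce \eqref{linear fractional wave equation compact no boundary} to two copies of the inhomogeneous fractional Schr\"odinger equation \eqref{linear fractional schrodinger equation compact no boundary} and then invoke Theorem \ref{theorem strichartz estimate compact no boundary}. By Duhamel's formula the solution of \eqref{linear fractional wave equation compact no boundary} can be written
\[
v(t)= \cos(t\Lambda_g^\sigma)\,v_0 + \frac{\sin(t\Lambda_g^\sigma)}{\Lambda_g^\sigma}\,v_1 + \int_0^t \frac{\sin((t-s)\Lambda_g^\sigma)}{\Lambda_g^\sigma}\, G(s)\,ds,
\]
and, using $\cos(t\Lambda_g^\sigma)=\tfrac12(e^{it\Lambda_g^\sigma}+e^{-it\Lambda_g^\sigma})$ and $\sin(t\Lambda_g^\sigma)=\tfrac1{2i}(e^{it\Lambda_g^\sigma}-e^{-it\Lambda_g^\sigma})$, I would express $v$ as a fixed linear combination of the half-wave pieces $e^{\pm it\Lambda_g^\sigma}v_0$, $e^{\pm it\Lambda_g^\sigma}\Lambda_g^{-\sigma}v_1$ and the Duhamel integrals $\int_0^t e^{\pm i(t-s)\Lambda_g^\sigma}\Lambda_g^{-\sigma}G(s)\,ds$, each of which is controlled by the Strichartz estimates of Theorem \ref{theorem strichartz estimate compact no boundary}.

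Before doing so I would dispatch the spectral point $0$. Let $\Pi_0$ denote the orthogonal projection onto $\ker\Lambda_g$; this is a finite-dimensional subspace of $L^2(M)$ on which all the norms $\|\cdot\|_{L^q(M)}$ and $\|\cdot\|_{H^\gamma(M)}$ are mutually equivalent and on which $\Pi_0$ acts boundedly. On its range equation \eqref{linear fractional wave equation compact no boundary} reads $\partial_t^2(\Pi_0 v)=\Pi_0 G$, whose explicit solution $\Pi_0 v(t)=\Pi_0 v_0 + t\,\Pi_0 v_1 + \int_0^t (t-s)\,\Pi_0 G(s)\,ds$ is estimated trivially on the bounded interval $I$ by $C\big(\|v_0\|_{H^{s}}+\|v_1\|_{H^{s-\sigma}}+\|G\|_{L^1(I,H^{s-\sigma})}\big)$ with $s:=\gamma_{pq}+(\sigma-1)/p$. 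On the complementary range, $\Lambda_g$ is invertible with spectrum bounded away from $0$, so $\Lambda_g^{-\sigma}(1-\Pi_0)$ maps $H^\gamma(M)$ boundedly into $H^{\gamma+\sigma}(M)$ for every $\gamma\in\R$; all occurrences of $\Lambda_g^{-\sigma}$ above are to be read in this sense.

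Finally I would apply Theorem \ref{theorem strichartz estimate compact no boundary}. Since $\Lambda_g^\sigma$ has real coefficients, $e^{-it\Lambda_g^\sigma}f=\overline{e^{it\Lambda_g^\sigma}\bar f}$, so \eqref{homogeneous strichartz estimate compact no boundary} and \eqref{homogeneous strichartz fractional schrodinger with source} hold verbatim for the backward flow $e^{-it\Lambda_g^\sigma}$ as well. Applying \eqref{homogeneous strichartz estimate compact no boundary} to $e^{\pm it\Lambda_g^\sigma}v_0$ bounds those terms by $\|v_0\|_{H^s}$; applying it to $e^{\pm it\Lambda_g^\sigma}\Lambda_g^{-\sigma}(1-\Pi_0)v_1$ and using the previous paragraph bounds them by $\|v_1\|_{H^{s-\sigma}}$; and the two Duhamel integrals are exactly the inhomogeneous part of \eqref{homogeneous strichartz fractional schrodinger with source} (for each sign, with vanishing data and source $\Lambda_g^{-\sigma}(1-\Pi_0)G$), hence bounded by $\|\Lambda_g^{-\sigma}(1-\Pi_0)G\|_{L^1(I,H^s)}\lesssim\|G\|_{L^1(I,H^{s-\sigma})}$. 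Adding the $\Pi_0$-contribution and recalling the definition of $\|[v](0)\|_{H^s}$ yields \eqref{homogeneous strichartz fractional wave with source}; the case $\sigma\in(0,1)$ is identical with $s=\gamma_{pq}$. I do not expect a real obstacle here: the entire analytic difficulty is already contained in Theorem \ref{theorem strichartz estimate compact no boundary}, and the only points requiring attention are the separate elementary treatment of $\ker\Lambda_g$ and the remark that the backward flow inherits the Strichartz bounds by complex conjugation.
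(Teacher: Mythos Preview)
Your proposal is correct and follows essentially the same route as the paper: write the Duhamel formula, split off the finite-dimensional kernel $\ker\Lambda_g^\sigma$ (where the estimate is elementary), express $\cos$ and $\sin$ in terms of $e^{\pm it\Lambda_g^\sigma}$, and apply Theorem~\ref{theorem strichartz estimate compact no boundary} to each half-wave piece. Your treatment is in fact slightly more explicit than the paper's (e.g.\ the ODE on the kernel and the conjugation remark for the backward flow), but the ideas are the same.
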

\begin{rem} \label{rem strichartz fractional wave bounded metric}
A similar result (with the same proof) as in Corollary $\ref{coro strichartz estimates wave compact no boundary}$ holds true once one considers the linear fractional wave equations on $\R^d, d\geq 1$ equipped with a smooth metric $g$ satisfying $(\ref{assump elliptic}), (\ref{assump bounded metric})$. 
\end{rem}
We next give applications of the Strichartz estimates given in Theorem $\ref{theorem strichartz estimate compact no boundary}$. Let us consider the following nonlinear fractional Schr\"odinger equation
\[
\left\{
\begin{array}{ccl}
i\partial_t u(t,x) + \Lambda_g^{\sigma} u(t,x) &=& -\mu (|u|^{\nu-1} u)(t,x), \quad (t, x) \in I \times M, \mu \in \{\pm 1\}, \\
u(0,x) &=& u_0(x), \quad x\in M.
\end{array}
\right.
\tag{NLFS}
\]
with the exponent $\nu>1$. The number $\mu=1$ (resp. $\mu=-1$) corresponds to the defocusing case (resp. focusing case). By a standard approximation (see e.g. \cite{Ginibre}), the following quantities are conserved by the flow of the equations,
\begin{align}
M(u)&= \int_{M} |u(t,x)|^2 d\text{vol}_g(x), \nonumber \\
E(u)&= \int_{M} \frac{1}{2}|\Lambda_g^{\sigma/2} u(t,x)|^2 + \frac{\mu}{\nu+1}|u(t,x)|^{\nu+1} d\text{vol}_g(x). \nonumber
\end{align}
Theorem $\ref{theorem strichartz estimate compact no boundary}$ gives the following local well-posedness result.
\begin{theorem} \label{theorem local wellposedness fractional schrodinger subcritical} Consider $(M,g)$ a smooth compact boundaryless Riemannian manifold of dimension $d \geq 1$. Let $\sigma \in (1,\infty), \nu >1$ and $\gamma \geq 0$ be such that
\begin{align}
\left\{
\begin{array}{ll}
\gamma > 1/2 - 1/\max ( \nu-1, 4) & \text{when } d=1, \\
\gamma > d/2 - 1/\max ( \nu-1, 2) & \text{when } d \geq 2,
\end{array}
\right. \label{condition gamma fractional schrodinger subcritical}
\end{align}
and also, if $\nu$ is not an odd integer,
\begin{align}
\lceil\gamma\rceil \leq \nu, \label{assump smoothness schrodinger}
\end{align}
where $\lceil\gamma\rceil$ is the smallest positive integer greater than or equal to $\gamma$. Then for all $u_0 \in H^\gamma(M)$, there exist $T>0$ and a unique solution to $\emph{(NLFS)}$ satisfying
\[
u \in C([0,T],H^\gamma(M)) \cap L^p([0,T],L^\infty(M)),
\] 
for some $p> \max (\nu-1,4)$ when $d=1$ and  some $p>\max (\nu-1,2)$ when $d\geq 2$. Moreover, the time $T$ depends only on the size of the initial data, i.e. only on $\|u_0\|_{H^\gamma(M)}$. In the case $\sigma \in (0,1)$, the same result holds with $(\ref{condition gamma fractional schrodinger subcritical})$ is replaced by
\begin{align}
\left\{
\begin{array}{ll}
\gamma > 1/2 - \sigma/\max ( \nu-1, 4) & \text{when } d=1, \\
\gamma > d/2 - \sigma/\max ( \nu-1, 2) & \text{when } d \geq 2.
\end{array}
\right. \label{condition gamma fractional schrodinger subcritical 0-1}
\end{align}
\end{theorem}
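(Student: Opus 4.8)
The plan is to run a contraction mapping argument on the Duhamel formulation of (NLFS),
\[
\Phi(u)(t) := e^{it\Lambda_g^\sigma} u_0 + i\mu \int_0^t e^{i(t-s)\Lambda_g^\sigma}(|u|^{\nu-1}u)(s)\,ds,
\]
in the space $X_T := C([0,T],H^\gamma(M)) \cap L^p([0,T],L^\infty(M))$ with norm $\|u\|_{X_T} := \|u\|_{L^\infty([0,T],H^\gamma(M))} + \|u\|_{L^p([0,T],L^\infty(M))}$, for an exponent $p$ to be specified. A fixed point of $\Phi$ in a small closed ball of $X_T$ will be the desired solution, and the regularity $u \in C([0,T],H^\gamma(M))$ will follow from the Duhamel formula together with the strong continuity on $H^\gamma(M)$ of the unitary group $e^{it\Lambda_g^\sigma}$ (which commutes with $\scal{\Lambda_g}^\gamma$ by the spectral theorem).

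First I would record the linear estimates. Because every fractional admissible pair has $q<\infty$, I reach the $L^\infty(M)$ norm through the Sobolev embedding $\|f\|_{L^\infty(M)} \lesssim \|\scal{\Lambda_g}^s f\|_{L^q(M)}$, valid whenever $q<\infty$ and $s>d/q$. Commuting $\scal{\Lambda_g}^s$ through the equation and applying Theorem $\ref{theorem strichartz estimate compact no boundary}$ — the homogeneous bound $(\ref{homogeneous strichartz estimate compact no boundary})$ and the bound with source $(\ref{homogeneous strichartz fractional schrodinger with source})$ for the $L^p_t L^\infty_x$ part, and the isometry property of $e^{it\Lambda_g^\sigma}$ on $H^\gamma(M)$ for the $L^\infty_t H^\gamma_x$ part — gives
\[
\|\Phi(u)\|_{X_T} \le C\|u_0\|_{H^\gamma(M)} + C\| \, |u|^{\nu-1}u \, \|_{L^1([0,T],H^\gamma(M))},
\]
as soon as $s + \gamma_{pq} + (\sigma-1)/p \le \gamma$, that is $\gamma \ge d/2 - d/q - 1/p + s$. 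Letting $q\to\infty$ and $s\downarrow d/q$ this becomes $\gamma>d/2-1/p$; at the same time fractional admissibility with $q$ large forces $p\ge 4$ when $d=1$ and $p\ge 2$ when $d\ge 2$. Imposing moreover $p>\nu-1$ (needed in the next step), a triple $(p,q,s)$ with all these properties exists precisely under condition $(\ref{condition gamma fractional schrodinger subcritical})$; in the regime $\sigma\in(0,1)$ one uses instead the version of Theorem $\ref{theorem strichartz estimate compact no boundary}$ at regularity $\gamma_{pq}$, the constraint becomes $\gamma>d/2-\sigma/p$, and this matches $(\ref{condition gamma fractional schrodinger subcritical 0-1})$.

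The analytic heart is then the \emph{Moser-type} inequality on the compact manifold,
\[
\| \, |u|^{\nu-1}u \, \|_{H^\gamma(M)} \lesssim \|u\|_{L^\infty(M)}^{\nu-1}\,\|u\|_{H^\gamma(M)},
\]
together with its Lipschitz analogue bounding $\| \, |u|^{\nu-1}u-|v|^{\nu-1}v \, \|_{H^\gamma(M)}$ by $(\|u\|_{L^\infty(M)}^{\nu-1}+\|v\|_{L^\infty(M)}^{\nu-1})\|u-v\|_{H^\gamma(M)}$ plus lower-order terms of the same homogeneity. This is exactly where the smoothness hypothesis $(\ref{assump smoothness schrodinger})$, $\lceil\gamma\rceil\le\nu$, is needed, so that $z\mapsto|z|^{\nu-1}z$ is of class $C^{\lceil\gamma\rceil}$; on $M$ these estimates follow by a partition of unity reducing to $\R^d$, where one invokes the fractional Leibniz and chain rules (equivalently, a paradifferential calculus adapted to $\Delta_g$). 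Inserting them into the previous display and applying Hölder in time, with $p>\nu-1$, to extract $T^\theta$, $\theta := 1-(\nu-1)/p>0$, from $\|u\|_{L^{\nu-1}([0,T],L^\infty)}\le T^{1/(\nu-1)-1/p}\|u\|_{L^p([0,T],L^\infty)}$, yields
\[
\|\Phi(u)\|_{X_T} \le C\|u_0\|_{H^\gamma(M)} + CT^\theta\|u\|_{X_T}^\nu, \qquad \|\Phi(u)-\Phi(v)\|_{X_T} \le CT^\theta(\|u\|_{X_T}^{\nu-1}+\|v\|_{X_T}^{\nu-1})\|u-v\|_{X_T}.
\]
Thus $\Phi$ is a self-map and a contraction on the ball $\{\|u\|_{X_T}\le 2C\|u_0\|_{H^\gamma(M)}\}$ once $T=T(\|u_0\|_{H^\gamma(M)})$ is small enough; the contraction principle gives a unique solution there, and a standard bootstrap over subintervals upgrades this to uniqueness in all of $X_T$.

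The step I expect to be the main obstacle is establishing those two nonlinear estimates at the full regularity $\gamma$ on an arbitrary compact boundaryless manifold: on $\R^d$ they are classical (Kato–Ponce and the fractional chain rule), but on $M$ one must transplant them through charts with care — comparing $\scal{\Lambda_g}^\gamma$ with its Euclidean counterpart up to acceptable lower-order errors, or using a spectral/paradifferential description of $H^\gamma(M)$ — while keeping all constants dependent only on $(M,g,\gamma,\nu)$, so that the existence time indeed depends only on $\|u_0\|_{H^\gamma(M)}$. The rest is the bookkeeping of exponents in the second step, which has been arranged precisely so that the fractional admissibility constraints collapse to $(\ref{condition gamma fractional schrodinger subcritical})$ and $(\ref{condition gamma fractional schrodinger subcritical 0-1})$.
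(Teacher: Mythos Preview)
Your overall architecture matches the paper's: fixed point on the Duhamel map, Strichartz estimate $(\ref{homogeneous strichartz fractional schrodinger with source})$ combined with the Sobolev embedding $H^\alpha_q(M)\hookrightarrow L^\infty(M)$ for $\alpha>d/q$ to reach $L^p_tL^\infty_x$, the Moser/Kato fractional chain rule $\|F(u)\|_{H^\gamma}\lesssim\|u\|_{L^\infty}^{\nu-1}\|u\|_{H^\gamma}$ for the self-map bound, and H\"older in time to extract $T^{1-(\nu-1)/p}$. The exponent bookkeeping leading to $(\ref{condition gamma fractional schrodinger subcritical})$, $(\ref{condition gamma fractional schrodinger subcritical 0-1})$ is the same.

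The gap is in the contraction step. You run the contraction in the strong norm $\|\cdot\|_{X_T}$, which forces you to estimate $\|F(u)-F(v)\|_{H^\gamma(M)}$. Writing $F(u)-F(v)$ schematically as $F'(\theta u+(1-\theta)v)(u-v)$ and applying the product and fractional chain rules, the chain rule now hits $F'$, so one needs $F'\in C^{\lceil\gamma\rceil}$, i.e.\ $\lceil\gamma\rceil\le\nu-1$ when $\nu$ is not an odd integer --- strictly stronger than the hypothesis $(\ref{assump smoothness schrodinger})$. Worse, for $1<\nu<2$ one only has $|F'(u)-F'(v)|\lesssim|u-v|^{\nu-1}$ with $\nu-1<1$, so the $H^\gamma$ difference bound is merely H\"older and your contraction inequality fails outright. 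The paper avoids this by equipping the closed ball with the \emph{weaker} distance
\[
\|u-v\|_{L^\infty(I,L^2(M))}+\|u-v\|_{L^p(I,H^{-\gamma_{pq}-(\sigma-1)/p}_q(M))},
\]
for which the elementary pointwise bound $|F(u)-F(v)|\lesssim(|u|^{\nu-1}+|v|^{\nu-1})|u-v|$ suffices; completeness of the ball in this metric is supplied by persistence of regularity (\cite{Cazenave}, Theorem~1.25). Uniqueness in the full class $C(I,H^\gamma)\cap L^p(I,L^\infty)$ is then obtained from the same $L^2$ difference estimate on short subintervals. Replacing your strong-norm contraction by this weak-norm contraction makes the argument go through under exactly $(\ref{assump smoothness schrodinger})$.
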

We note that when $\nu$ is an odd integer, we have $F(\cdot)=-\mu|\cdot|^{\nu-1}\cdot \in C^\infty(\C, \C)$ and when $\nu$ is not an odd integer, condition $(\ref{assump smoothness schrodinger})$ implies $f \in C^{\lceil\gamma\rceil}(\C, \C)$. It allows us to use the fractional derivatives (see \cite{Kato95}, \cite{Dinh}). \newline 
\indent As a direct consequence of Theorem $\ref{theorem local wellposedness fractional schrodinger subcritical}$ and the conservation laws, we have the following global well-posedness result for the defocusing nonlinear fractional Schr\"odinger equation, i.e. $\mu=1$ in (NLFS). 
\begin{coro} \label{coro global existence of defocusing NLFS} Consider $(M,g)$ a smooth compact boundaryless Riemannian manifold of dimension $d \geq 1$. Let $\sigma \in (1/2,\infty)\backslash\{1\}$ when $d=1$, $\sigma > d-1$ when $d\geq 2$ and $\nu>1$ be such that if $\nu$ is not an odd integer, $\lceil\sigma/2\rceil\leq \nu$. Then for all $u_0 \in H^{\sigma/2}(M)$, there exists a unique global solution $u \in C(\R, H^{\sigma/2}(M)) \cap L^p_{\emph{loc}}(\R, L^\infty(M))$ to the defocusing $\emph{(NLFS)}$ for some $p>\max (\nu-1,4)$ when $d=1$ and some $p>\max (\nu-1, 2)$ when $d\geq 2$. 
\end{coro}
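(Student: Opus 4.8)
The plan is to carry out the classical energy-subcritical globalization scheme: feed the conserved quantities $M(u)$ and $E(u)$ into the local well-posedness result Theorem~\ref{theorem local wellposedness fractional schrodinger subcritical} applied at regularity $\gamma=\sigma/2$, and iterate. The structural feature that makes this work is that the local existence time $T$ in Theorem~\ref{theorem local wellposedness fractional schrodinger subcritical} depends only on $\|u_0\|_{H^\gamma(M)}$, so that the local solver can be restarted repeatedly with a time-step bounded below, as long as the $H^{\sigma/2}$-norm of the solution stays bounded.

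First I would check that $\gamma=\sigma/2$ is an admissible regularity. Under the stated hypotheses on $(\sigma,\nu)$ one verifies that $\sigma/2$ satisfies (\ref{condition gamma fractional schrodinger subcritical}) when $\sigma\in(1,\infty)$ (and (\ref{condition gamma fractional schrodinger subcritical 0-1}) when $d=1$ and $\sigma\in(1/2,1)$), while (\ref{assump smoothness schrodinger}) reads $\lceil\sigma/2\rceil\le\nu$, which is exactly the hypothesis made when $\nu$ is not an odd integer; one also notes that $\sigma/2$ lies in the Sobolev range $H^{\sigma/2}(M)\hookrightarrow L^{\nu+1}(M)$, so that $E(u_0)$ is finite. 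Theorem~\ref{theorem local wellposedness fractional schrodinger subcritical} then provides, for each $u_0\in H^{\sigma/2}(M)$, a unique local solution $u\in C([0,T],H^{\sigma/2}(M))\cap L^p([0,T],L^\infty(M))$ with $T=T(\|u_0\|_{H^{\sigma/2}(M)})>0$, and we may take the function $r\mapsto T(r)$ to be non-increasing.

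Next I would establish the a priori bound. Because $\mu=1$, both terms in $E$ are non-negative, so conservation of energy (legitimate at this regularity by the standard approximation argument invoked for the conservation laws) yields $\|\Lambda_g^{\sigma/2}u(t)\|_{L^2(M)}^2\le 2E(u_0)$, while conservation of mass gives $\|u(t)\|_{L^2(M)}=\|u_0\|_{L^2(M)}$; since $\|v\|_{H^{\sigma/2}(M)}^2\lesssim\|v\|_{L^2(M)}^2+\|\Lambda_g^{\sigma/2}v\|_{L^2(M)}^2$, this produces a time-independent bound $\|u(t)\|_{H^{\sigma/2}(M)}\le A=A(\|u_0\|_{H^{\sigma/2}(M)})$ throughout the interval of existence. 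Restarting the local solver at the times $k\tau$ with the fixed step $\tau:=T(A)>0$ then yields a solution on $[0,+\infty)$, and since $u(t,x)\mapsto\overline{u(-t,x)}$ maps solutions of (NLFS) to solutions, the solution extends to all of $\R$. Gluing the pieces, continuity at the junction points gives $u\in C(\R,H^{\sigma/2}(M))$, finitely many steps per compact time interval give $u\in L^p_{\mathrm{loc}}(\R,L^\infty(M))$, and uniqueness is inherited from the local statement. There is no genuinely hard step here: once the a priori bound---immediate from the defocusing sign---is in place, the continuation is routine, so the only point demanding care is the bookkeeping that $\gamma=\sigma/2$ meets the requirements of Theorem~\ref{theorem local wellposedness fractional schrodinger subcritical} under the constraints on $\sigma$.
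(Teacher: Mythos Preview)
Your proposal is correct and follows essentially the same approach as the paper: verify that $\gamma=\sigma/2$ meets the hypotheses of Theorem~\ref{theorem local wellposedness fractional schrodinger subcritical}, invoke the conservation of mass and (defocusing) energy to obtain a uniform $H^{\sigma/2}$ bound, and iterate the local result using that the existence time depends only on this norm. Your write-up is in fact more explicit than the paper's on the admissibility check and on the time-reversal symmetry for negative times, but the underlying argument is identical.
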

We finally give applications of Strichartz estimates given in Corollary $\ref{coro strichartz estimates wave compact no boundary}$ for the nonlinear fractional wave equation. Let us consider the following nonlinear fractional wave equation posed on $(M,g)$, 
\[
\left\{
\begin{array}{ccc}
\partial^2_t v(t,x) + \Lambda_g^{2\sigma} v(t,x) &=& -\mu (|v|^{\nu-1} v)(t,x), \quad (t, x) \in I \times M, \mu \in \{\pm 1\}, \\
v(0,x) =v_0(x),&&\partial_t v(0,x)= v_1(x), \quad x\in M.
\end{array}
\right.
\tag{NLFW}
\]
with $\sigma \in (0,\infty)\backslash\{1\}$ and the exponent $\nu>1$. In this case, the following energy is conserved under the flow of the equation, i.e.
\begin{align}
E(v, \partial_t v)= \int_{M}\frac{1}{2} |\partial_t v (t,x)|^2 + \frac{1}{2}|\Lambda_g^{\sigma} v(t,x)|^2 + \frac{\mu}{\nu+1}|v(t,x)|^{\nu+1} d\text{vol}_g(x). \nonumber
\end{align}
Using the Strichartz estimates given in Corollary $\ref{coro strichartz estimates wave compact no boundary}$, we have the following local well-posedness result.
\begin{theorem} \label{theorem local wellposedness subcritical wave}
Consider $(M,g)$ a smooth compact boundaryless Riemannian manifold of dimension $d \geq 1$. Let $\sigma \in (1,\infty), \nu >1$ and $\gamma \geq 0$ be as in $(\ref{condition gamma fractional schrodinger subcritical})$ and also, if $\nu$ is not an odd integer, $(\ref{assump smoothness schrodinger})$. Then for all $v_0 \in H^\gamma(M)$ and $v_1 \in H^{\gamma-\sigma}(M)$, there exist $T>0$ and a unique solution to $\emph{(NLFW)}$ satisfying 
\[
v \in C([0,T], H^\gamma(M)) \cap C^1([0,T], H^{\gamma-\sigma}(M)) \cap L^p([0,T], L^\infty(M)),
\] 
for some $p > \max (\nu-1,4)$ when $d=1$ and some $p >\max(\nu-1, 2)$ when $d\geq 2$. Moreover, the time $T$ depends only on the size of the initial data, i.e. only on $\|[v](0)\|_{H^\gamma(M)}$. In the case $\sigma \in (0,1)$, the same result holds with $(\ref{condition gamma fractional schrodinger subcritical 0-1})$ in place of $(\ref{condition gamma fractional schrodinger subcritical})$.
\end{theorem}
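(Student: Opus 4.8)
The plan is to run a contraction-mapping argument on the Duhamel form of \emph{(NLFW)}, with Corollary~\ref{coro strichartz estimates wave compact no boundary} as the linear input; the scheme runs parallel to the proof of Theorem~\ref{theorem local wellposedness fractional schrodinger subcritical}, the only structural changes being the second-order Duhamel formula and the use of the wave Strichartz estimate. Write $F(v):=-\mu|v|^{\nu-1}v$ and abbreviate $L^r_tX:=L^r([0,T],X)$. A solution of \emph{(NLFW)} with data $(v_0,v_1)$ on $[0,T]$ satisfies
\[
v(t)=\cos(t\Lambda_g^\sigma)v_0+\frac{\sin(t\Lambda_g^\sigma)}{\Lambda_g^\sigma}\,v_1+\int_0^t\frac{\sin((t-s)\Lambda_g^\sigma)}{\Lambda_g^\sigma}\,F(v(s))\,ds=:\Phi(v)(t),
\]
where $\cos(t\Lambda_g^\sigma)$ and $\Lambda_g^{-\sigma}\sin(t\Lambda_g^\sigma)$ are defined by the spectral calculus of $-\Delta_g$, the latter acting as multiplication by $t$ on $\ker\Delta_g$, so that both are bounded on every $H^s(M)$. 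I would seek a fixed point of $\Phi$ in the ball $Y_{T,R}$ of those $v\in C([0,T],H^\gamma(M))\cap C^1([0,T],H^{\gamma-\sigma}(M))\cap L^p_tL^\infty(M)$ with $\|v\|_{L^\infty_tH^\gamma(M)}+\|\partial_tv\|_{L^\infty_tH^{\gamma-\sigma}(M)}+\|v\|_{L^p_tL^\infty(M)}\le R$, where $R$ is a fixed multiple of $\|[v](0)\|_{H^\gamma(M)}:=\|v_0\|_{H^\gamma(M)}+\|v_1\|_{H^{\gamma-\sigma}(M)}$ and $p$ is chosen next.

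For the linear estimates, fix a fractional admissible pair $(p,q)$ with $q<\infty$ and $p$ slightly larger than $\max(\nu-1,4)$ when $d=1$ and than $\max(\nu-1,2)$ when $d\ge2$; this is possible since $p\ge2$ is always admissible while the constraint $\tfrac2p+\tfrac dq\le\tfrac d2$ degenerates to $p\ge\tfrac4d$ as $q\to\infty$. Set $\theta:=\gamma_{pq}+(\sigma-1)/p$ when $\sigma\in(1,\infty)$ and $\theta:=\gamma_{pq}$ when $\sigma\in(0,1)$, so that Corollary~\ref{coro strichartz estimates wave compact no boundary} reads $\|v\|_{L^p_tL^q}\lesssim\|[v](0)\|_{H^\theta(M)}+\|G\|_{L^1_tH^{\theta-\sigma}(M)}$ for $v$ solving $(\ref{linear fractional wave equation compact no boundary})$. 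Choosing $r>d/q$ small and using the Sobolev embedding $W^{r,q}(M)\hookrightarrow L^\infty(M)$, this estimate applied to $\langle\Lambda_g\rangle^r v$ yields
\[
\|v\|_{L^p_tL^\infty}\ \lesssim\ \|\langle\Lambda_g\rangle^r v\|_{L^p_tL^q}\ \lesssim\ \|[v](0)\|_{H^{r+\theta}(M)}+\|F(v)\|_{L^1_tH^{r+\theta-\sigma}(M)}.
\]
Since $r+\theta$ equals $r+d/2-d/q-1/p$ (resp.\ $r+d/2-d/q-\sigma/p$) and can be made $\le\gamma$ by taking $p$ and $r$ close to their thresholds --- which is precisely what $(\ref{condition gamma fractional schrodinger subcritical})$ (resp.\ $(\ref{condition gamma fractional schrodinger subcritical 0-1})$) allows --- the data term is bounded by $\|[v](0)\|_{H^\gamma(M)}$. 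The $H^\gamma$- and $H^{\gamma-\sigma}$-energy bounds for $\cos(t\Lambda_g^\sigma)$ and $\Lambda_g^{-\sigma}\sin(t\Lambda_g^\sigma)$ likewise bound $\|v\|_{L^\infty_tH^\gamma}$ and $\|\partial_tv\|_{L^\infty_tH^{\gamma-\sigma}}$ by $\|[v](0)\|_{H^\gamma(M)}+\|F(v)\|_{L^1_tH^{\gamma-\sigma}(M)}$.

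It remains to estimate the nonlinearity on $M$: for $s\le\gamma$ --- trivially when $s\le0$, the $H^s$-norm being dominated by the $L^2$-norm --- a partition of unity reduces to $\R^d$, where the fractional Leibniz/Moser-type inequality gives $\|F(v)\|_{H^s(M)}\lesssim\|v\|_{L^\infty(M)}^{\nu-1}\|v\|_{H^s(M)}$; here the hypothesis $\lceil\gamma\rceil\le\nu$, which makes $F\in C^{\lceil\gamma\rceil}(\C,\C)$ (or $F\in C^\infty$ when $\nu$ is an odd integer), enters, as pointed out after Theorem~\ref{theorem local wellposedness fractional schrodinger subcritical}. Since $r+\theta-\sigma\le\gamma-\sigma\le\gamma$, this applies with $s=\gamma-\sigma$ and $s=r+\theta-\sigma$, and Hölder in time ($p>\nu-1$) upgrades it to $\|F(v)\|_{L^1_tH^s}\lesssim T^{1-(\nu-1)/p}\,\|v\|_{L^p_tL^\infty}^{\nu-1}\,\|v\|_{L^\infty_tH^s}$. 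Inserting this into the three estimates above shows $\Phi$ maps $Y_{T,R}$ into itself once $R\sim\|[v](0)\|_{H^\gamma(M)}$ and $T=T(\|[v](0)\|_{H^\gamma(M)})$ is small. For the contraction I would use the weaker metric $d(v,w):=\|v-w\|_{L^\infty_tL^2(M)}$, for which $Y_{T,R}$ is complete (weak-$*$ lower semicontinuity of the strong norm), together with $|F(v)-F(w)|\lesssim(|v|^{\nu-1}+|w|^{\nu-1})|v-w|$ and Hölder in time, giving $\|F(v)-F(w)\|_{L^1_tL^2}\lesssim T^{1-(\nu-1)/p}(2R)^{\nu-1}d(v,w)$; the $L^2$-energy estimate for the Duhamel term then makes $\Phi$ a contraction on $(Y_{T,R},d)$ for $T$ small. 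Its fixed point is the desired solution; the regularity $v\in C([0,T],H^\gamma(M))\cap C^1([0,T],H^{\gamma-\sigma}(M))\cap L^p_tL^\infty(M)$ and the dependence of $T$ only on $\|[v](0)\|_{H^\gamma(M)}$ follow from the a priori estimates evaluated at the fixed point, and uniqueness in this class from the same difference estimate on short sub-intervals.

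The argument carries no essential difficulty: it is a routine transcription of the classical subcritical local well-posedness scheme. The only place needing real care is the nonlinear (fractional Leibniz/Moser) estimate on $M$ for a non-integer $s$ and a merely $C^{\lceil\gamma\rceil}$ nonlinearity --- transporting it from $\R^d$ through coordinate charts and verifying that the regularity budget $\lceil\gamma\rceil\le\nu$ together with $s\le\gamma$ is exactly what is required --- and the Strichartz/Sobolev bookkeeping that must reproduce the thresholds $(\ref{condition gamma fractional schrodinger subcritical})$ and $(\ref{condition gamma fractional schrodinger subcritical 0-1})$; both are standard once the new estimate of Corollary~\ref{coro strichartz estimates wave compact no boundary} is in hand.
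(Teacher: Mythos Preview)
Your proposal is correct and follows essentially the same route as the paper: a contraction argument on the Duhamel formulation, the wave Strichartz estimate of Corollary~\ref{coro strichartz estimates wave compact no boundary} combined with Sobolev embedding to obtain an $L^p_tL^\infty$ bound, the Moser/Kato product estimate $\|F(v)\|_{H^s}\lesssim\|v\|_{L^\infty}^{\nu-1}\|v\|_{H^s}$ for the nonlinearity, and a weak metric for the contraction. The only cosmetic differences are that the paper carries the auxiliary norm $L^p_tH^\alpha_q$ (with $\alpha=\gamma-\gamma_{pq}-(\sigma-1)/p$) in the ball and uses a slightly richer contraction metric, and that it first dominates $\|F(v)\|_{H^{\gamma-\sigma}}$ by $\|F(v)\|_{H^\gamma}$ before invoking the product estimate at nonnegative regularity---which sidesteps the small imprecision in your display $\|F(v)\|_{H^s}\lesssim\|v\|_{L^\infty}^{\nu-1}\|v\|_{H^s}$ when $s<0$ (there the right-hand side should carry $\|v\|_{L^2}$ or $\|v\|_{H^\gamma}$, which is of course harmless inside $Y_{T,R}$).
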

We organize this paper as follows. In Section 2, we prove the Strichartz estimates on $\R^d$ endowed with the smooth bounded metric $g$. In Section 3, we will give the proof of Strichartz estimates on compact manifolds $(M,g)$. We then prove the well-posedness results for the pure power-type of nonlinear fractional Schr\"odinger and wave equations on compact manifolds without boundary in Section 4.\newline  
\indent \textbf{Notation.} In this paper the constant may change from line to line and will be denoted by the same $C$. The notation $A \lesssim B$ means that there exists $C>0$ such that $A \leq C B$, and the one $A \sim B$ means that $A \lesssim B$ and $B \lesssim A$. For Banach spaces $X$ and $Y$, the notation $\|\cdot\|_{\Lc(X,Y)}$ denotes the operator norm from $X$ to $Y$ and $\|\cdot\|_{\Lc(X)}:= \|\cdot\|_{\Lc(X,X)}$.
\section{Strichartz estimates on $(\R^d,g)$} \label{section Rd}
\setcounter{equation}{0}
\subsection{Reduction of problem}
In this subsection, we give a reduction of Theorem $\ref{theorem local strichartz fractional schrodinger on manifolds}$ due to the Littlewood-Paley decomposition. To do so, we firstly recall some useful facts on pseudo-differential calculus. For $m \in \R$, we consider the symbol class $S(m)$ the space of smooth functions $a$ on $\R^{2d}$ satisfying
\[
|\partial^\alpha_x \partial^\beta_\xi a(x,\xi)| \leq C_{\alpha\beta} \scal{\xi}^{m-|\beta|},
\] 
for all $x, \xi \in \R^d$. We also need $S(-\infty):= \cap_{m\in \R} S(m)$. We define the semi-classical pseudo-differential operator with a symbol $a \in S(m)$ by
\[
Op_h(a) u (x):= (2\pi h)^{-d} \iint_{\R^{2d}} e^{ih^{-1}(x-y)\xi} a(x,\xi) u(y) dy d\xi,
\]
where $u \in \mathscr{S}(\R^{d})$. The following result gives the $\Lc(L^q(\R^d), L^r(\R^d))$-bound for pseudo-differential operators (see e.g. \cite[Proposition 2.4]{BTlocalstrichartz}). 
\begin{prop} \label{prop lq lr bounds}
Let $m >d$ and $a$ be a continuous function on $\R^{2d}$ smooth with respect to the second variable satisfying for all $\beta \in \N^d$, there exists $C_\beta>0$ such that for all $x, \xi \in \R^d$,
\[
|\partial^\beta_\xi a(x,\xi)| \leq C_\beta \scal{\xi}^{-m}.
\]
Then for all $1 \leq q \leq r \leq \infty$, there exists $C>0$ such that for all $h \in (0,1]$,
\[
\|Op_h(a)\|_{\Lc(L^q(\R^d), L^r(\R^d)} \leq C h^{-\left(\frac{d}{q}-\frac{d}{r}\right)}.
\]
\end{prop}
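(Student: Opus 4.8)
The plan is to realize $Op_h(a)$ as an integral operator, bound its kernel pointwise by a rescaled integrable convolution kernel, and then finish with Young's inequality. For $u \in \mathscr{S}(\R^d)$ one has $Op_h(a)u(x) = \int_{\R^d} K_h(x,y) u(y)\, dy$ with
\[
K_h(x,y) = (2\pi h)^{-d}\int_{\R^d} e^{ih^{-1}(x-y)\cdot \xi}\, a(x,\xi)\, d\xi,
\]
the $\xi$-integral being absolutely convergent since $m>d$. Note that only smoothness of $a$ in $\xi$ and the continuity in $x$ will be used below.

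The key step is an integration by parts in $\xi$. Since $(1-\Delta_\xi) e^{ih^{-1}(x-y)\cdot\xi} = (1+h^{-2}|x-y|^2) e^{ih^{-1}(x-y)\cdot\xi}$, iterating $N$ times and integrating by parts gives
\[
K_h(x,y) = (2\pi h)^{-d} (1+h^{-2}|x-y|^2)^{-N}\int_{\R^d} e^{ih^{-1}(x-y)\cdot\xi}\, (1-\Delta_\xi)^N a(x,\xi)\, d\xi.
\]
By hypothesis $|(1-\Delta_\xi)^N a(x,\xi)| \lesssim \scal{\xi}^{-m}$ uniformly in $x$, and since $m>d$ the remaining integral is $O(1)$; hence $|K_h(x,y)| \lesssim h^{-d}(1+h^{-2}|x-y|^2)^{-N} \lesssim h^{-d} \scal{h^{-1}(x-y)}^{-2N}$. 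As $N$ is arbitrary, for any $\mu>0$ we obtain $|K_h(x,y)| \le C_\mu \, \Phi_h(x-y)$, where $\Phi_h(z):= h^{-d}\scal{h^{-1}z}^{-\mu}$.

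To conclude, from $|Op_h(a)u(x)| \le C_\mu (\Phi_h * |u|)(x)$ and Young's inequality with exponent $s$ defined by $1/s := 1-(1/q-1/r) \in [0,1]$ (note $q\le r$ guarantees $s \in [1,\infty]$), we get $\|Op_h(a)u\|_{L^r} \le C_\mu \|\Phi_h\|_{L^s}\|u\|_{L^q}$. A change of variables $z = hw$ yields $\|\Phi_h\|_{L^s} = h^{-d+d/s}\big\|\scal{\cdot}^{-\mu}\big\|_{L^s(\R^d)}$, and choosing $\mu > d$ makes the last factor finite; since $-d+d/s = -d(1/q-1/r)$ this is exactly the claimed bound. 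Finally one extends from $\mathscr{S}(\R^d)$ to $L^q(\R^d)$ by density when $q<\infty$, the case $q=\infty$ (which forces $r=s=\infty$) being immediate from $\|\Phi_h\|_{L^\infty} = h^{-d}$.

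There is no serious obstacle here: the proof is a standard kernel estimate. The only points requiring care are the bookkeeping in the integration by parts (so that the mere continuity of $a$ in $x$ is enough, and the decay gain is genuinely in $\scal{h^{-1}(x-y)}$), and checking that the scaling exponent $-d(1/q-1/r)$ comes out correctly at the endpoints $q=r$ (where $s=1$) and $q=1,\ r=\infty$ (where $s=\infty$).
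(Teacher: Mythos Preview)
Your argument is correct and is exactly the standard one; the paper does not supply its own proof but simply cites \cite[Proposition~2.4]{BTlocalstrichartz}, where precisely this kernel-estimate-plus-Young's-inequality argument is given. One small slip in your last sentence: when $q=\infty$ (hence $r=\infty$) the Young exponent is $s=1$, not $s=\infty$, and the relevant quantity is $\|\Phi_h\|_{L^1}=O(1)$, which gives the correct $h^{0}$ bound; the extension to $L^\infty$ then follows directly from the pointwise domination $|K_h(x,y)|\le C\,\Phi_h(x-y)$ with $\Phi_h\in L^1$, without appealing to density.
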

For a given $f \in C^\infty_0(\R)$, we can approximate $f(h^2P)$ in term of pseudo-differential operators. We have the following result (see e.g \cite[Proposition 2.5]{BTlocalstrichartz} or \cite[Proposition 2.1]{BGT}).
\begin{prop} \label{prop parametrix f}
Consider $\R^d$ equipped with a smooth metric $g$ satisfying $(\ref{assump elliptic})$ and $(\ref{assump bounded metric})$. Then for a given $f \in C^\infty_0(\R)$, there exist a sequence of symbols $q_j \in S(-\infty)$ satisfying $q_0= f \circ p$ and $\emph{supp}(q_j) \subset \emph{supp}(f \circ p)$ such that for all $N \geq 1$,
\[
f(h^2P)= \sum_{j=0}^{N-1} h^j Op_h(q_j)+ h^N R_N(h),
\]
and for all $m \geq 0$ and all $1 \leq q \leq r \leq \infty$, there exists $C>0$ such that for all $h \in (0,1]$,
\begin{align}
\|R_N(h)\|_{\Lc(L^q(\R^d), L^r(\R^d))} &\leq C h^{-\left(\frac{d}{q}-\frac{d}{r}\right)}. \nonumber \\
\|R_N(h)\|_{\Lc(H^{-m}(\R^d), H^m(\R^d))} &\leq C h^{-2m}. \nonumber
\end{align}
\end{prop}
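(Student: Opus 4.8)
The plan is to realize $f(h^2P)$ through the Helffer--Sj\"ostrand functional calculus and then construct a parametrix for the resolvent by the semiclassical symbolic calculus. First I would fix an almost analytic extension $\tilde f \in C^\infty_0(\C)$ of $f$: since $f \in C^\infty_0(\R)$, one can take $\tilde f$ with $\tilde f|_\R = f$, with $\text{supp}(\tilde f)$ compact and contained in $\{|\mathrm{Im}\, z| \leq 1\}$, and with $|\bar\partial\tilde f(z)| \leq C_N |\mathrm{Im}\, z|^N$ for every $N \in \N$. The Helffer--Sj\"ostrand formula
\[
f(h^2P) = -\frac1\pi \int_\C \bar\partial\tilde f(z)\,(z-h^2P)^{-1}\,dL(z)
\]
then holds ($dL$ the Lebesgue measure on $\C$), the integral converging in $\Lc(L^2(\R^d))$ because $\|(z-h^2P)^{-1}\|_{\Lc(L^2(\R^d))} \leq |\mathrm{Im}\, z|^{-1}$ and $\bar\partial\tilde f$ vanishes to infinite order on $\R$. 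The whole problem thus reduces to constructing, uniformly in $z \in \text{supp}(\tilde f)$, a semiclassical parametrix for $(z-h^2P)^{-1}$.

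For that I would write $h^2P = Op_h(p) + h\, Op_h(p_1) + h^2\, Op_h(p_2)$, where $p(x,\xi) = \xi^t G(x)\xi \in S(2)$ is the principal symbol and $p_1 \in S(1)$, $p_2 \in S(0)$ are read off from the explicit formula for $\Delta_g$; here $(\ref{assump bounded metric})$ guarantees these symbols lie, with $h$-independent seminorms, in the indicated classes, while $(\ref{assump elliptic})$ gives $|z-p(x,\xi)|^{-1} \leq C|\mathrm{Im}\, z|^{-1}\scal{\xi}^{-2}$ for $z \in \text{supp}(\tilde f)$. Seeking $b(z) \sim \sum_{j\geq 0} h^j b_j(z)$ with $(z-h^2P)\# b(z) = 1$ and using the composition law $Op_h(a)Op_h(c) = Op_h(a\#c)$, $a\#c \sim \sum_{k\geq 0} h^k c_k(a,c)$ with $c_0(a,c) = ac$, one obtains recursively $b_0(z) = (z-p)^{-1}$ and $b_j(z) = (z-p)^{-1}\cdot(\text{universal polynomial in the } \partial^\alpha p,\ \partial^\alpha p_1,\ \partial^\alpha p_2 \text{ and } \partial^\alpha b_0(z),\dots,\partial^\alpha b_{j-1}(z))$, with $b_j(z) \in S(-2-j)$ of seminorms $\leq C_j |\mathrm{Im}\, z|^{-N_j}$. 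Truncating at order $N$ produces $e_N(z) \in S(-\infty)$, of seminorms $\leq C|\mathrm{Im}\, z|^{-N'}$, with
\[
(z-h^2P)^{-1} - \sum_{j=0}^{N-1} h^j Op_h(b_j(z)) = h^N (z-h^2P)^{-1} Op_h(e_N(z)).
\]
Setting $q_j(x,\xi) := -\frac1\pi\int_\C \bar\partial\tilde f(z)\, b_j(z;x,\xi)\, dL(z)$, for fixed $(x,\xi)$ the map $z \mapsto b_j(z;x,\xi)$ is a combination of the $(z-p(x,\xi))^{-1-k}$ with $z$-independent coefficients, so the Cauchy--Pompeiu formula yields $q_j(x,\xi) = \sum_k c_{j,k}(x,\xi)\, f^{(k)}(p(x,\xi))$; in particular $q_0 = f\circ p$, $\text{supp}(q_j) \subset \{p \in \text{supp}(f)\} = \text{supp}(f\circ p)$, and since $(\ref{assump elliptic})$ forces $\{p \in \text{supp}(f)\}$ to be bounded in $\xi$ uniformly in $x$, each $q_j \in S(-\infty)$. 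This gives the stated expansion with $R_N(h) = -\frac1\pi\int_\C \bar\partial\tilde f(z)\,(z-h^2P)^{-1}Op_h(e_N(z))\, dL(z)$.

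It then remains to estimate $R_N(h)$. For the $H^{-m}(\R^d) \to H^m(\R^d)$ bound, $Op_h(e_N(z))$ maps $H^{-m} \to H^m$ with norm $\lesssim h^{-2m}|\mathrm{Im}\, z|^{-N'}$ (it is $Op_h$ of a symbol in $S(-\infty)$) and $(z-h^2P)^{-1}$ is bounded on $H^m$ with norm $\lesssim |\mathrm{Im}\, z|^{-C}$ by elliptic regularity, so integrating and absorbing the $|\mathrm{Im}\, z|^{-1}$ powers into $\bar\partial\tilde f$ gives $\|R_N(h)\|_{\Lc(H^{-m},H^m)} \lesssim h^{-2m}$. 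For the $L^q(\R^d) \to L^r(\R^d)$ bound, $1 \leq q \leq r \leq \infty$, I would factor $(z-h^2P)^{-1}Op_h(e_N(z)) = \big[(z-h^2P)^{-1}(1+h^2P)^{-L}\big]\,Op_h(\hat e_N(z))$, where $\hat e_N(z) = (1+h^2P)^L Op_h(e_N(z))$ is $Op_h$ of a symbol in $S(-\infty)$ of seminorms $\lesssim |\mathrm{Im}\, z|^{-N'}$ (a finite Leibniz sum, since $(1+h^2P)^L$ is differential); then Proposition \ref{prop lq lr bounds} gives $\|Op_h(\hat e_N(z))\|_{\Lc(L^q,L^r)} \lesssim h^{-(d/q-d/r)}|\mathrm{Im}\, z|^{-N'}$, while for $2L$ large $(z-h^2P)^{-1}(1+h^2P)^{-L}$ is bounded on every $L^r(\R^d)$ with norm $\lesssim |\mathrm{Im}\, z|^{-1}$ uniformly in $h$ by standard uniform resolvent estimates on $(\R^d,g)$; integrating in $z$ gives $\|R_N(h)\|_{\Lc(L^q,L^r)} \lesssim h^{-(d/q-d/r)}$.

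I expect the main obstacle to be this last point: obtaining the $L^q \to L^r$ bound with the \emph{exact} power $h^{-(d/q-d/r)}$ uniformly down to the endpoints $q = 1$, $r = \infty$ — which forces one to establish the uniform-in-$h$ $L^r$-boundedness of $(z-h^2P)^{-1}(1+h^2P)^{-L}$ with only a polynomial $|\mathrm{Im}\, z|^{-1}$ loss — and, throughout, keeping careful track of all the $|\mathrm{Im}\, z|^{-1}$ powers so that they are killed upon integration by the infinite-order vanishing of $\bar\partial\tilde f$ on $\R$. The remaining steps are the standard Helffer--Sj\"ostrand and symbolic-calculus bookkeeping, as in \cite{BGT} and \cite{BTlocalstrichartz}.
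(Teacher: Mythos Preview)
Your approach---Helffer--Sj\"ostrand combined with an elliptic resolvent parametrix, then Cauchy's formula to extract the $q_j$---is correct and is precisely the method the paper cites (\cite[Proposition 2.1]{BGT}, \cite[Proposition 2.5]{BTlocalstrichartz}) and carries out in detail for the compact-manifold analogue in Proposition \ref{prop parametrix f compact manifolds}. One small correction: the remainder symbol $e_N(z)$ lies only in $S(-N)$, not $S(-\infty)$, so your arguments for both remainder bounds work directly only for $N$ large (this is explicit in the paper's proof of Proposition \ref{prop parametrix f compact manifolds}: ``By choosing $N$ such that $N-2m>d$''); the statement for small $N$ then follows from the identity $R_N(h) = \sum_{j=N}^{N'-1} h^{j-N} Op_h(q_j) + h^{N'-N} R_{N'}(h)$ together with Proposition \ref{prop lq lr bounds} applied to the $q_j \in S(-\infty)$.
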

A direct consequence of Proposition $\ref{prop lq lr bounds}$ and Proposition $\ref{prop parametrix f}$ is the following $\Lc(L^q(\R^d), L^r(\R^d))$-bound for $f(h^2P)$.
\begin{prop} \label{prop lq lr bounds for f(h^2P)}
Let $f\in C^\infty_0(\R)$. Then for all $1 \leq q \leq r \leq \infty$, there exists $C>0$ such that for all $h \in (0,1]$,
\[
\|f(h^2P)\|_{\Lc(L^q(\R^d), L^r(\R^d))} \leq C h^{-\left(\frac{d}{q}-\frac{d}{r}\right)}.
\]
\end{prop}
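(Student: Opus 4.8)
The plan is to simply combine the parametrix decomposition of Proposition \ref{prop parametrix f} with the operator-norm bound of Proposition \ref{prop lq lr bounds}. First I would fix $f \in C^\infty_0(\R)$ and apply Proposition \ref{prop parametrix f} with $N = 1$, writing
\[
f(h^2 P) = Op_h(q_0) + h R_1(h), \qquad q_0 = f \circ p,
\]
where $q_0 \in S(-\infty)$ and $\emph{supp}(q_0) \subset \emph{supp}(f \circ p)$. The remainder $R_1(h)$ is controlled directly by Proposition \ref{prop parametrix f}: there is $C>0$ with $\|R_1(h)\|_{\Lc(L^q(\R^d),L^r(\R^d))} \le C h^{-(d/q - d/r)}$ for all $h \in (0,1]$, so $\|h R_1(h)\|_{\Lc(L^q,L^r)} \le C h^{1 - (d/q - d/r)} \le C h^{-(d/q - d/r)}$, using $h \le 1$.

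Next I would bound the principal term $Op_h(q_0)$. Since $q_0 = f \circ p$ and $p(x,\xi) = \xi^t G(x) \xi$ satisfies the ellipticity bound $(\ref{assump elliptic})$, the set $\{(x,\xi) : p(x,\xi) \in \emph{supp}(f)\}$ is contained in a region $\{|\xi| \le R\}$ for some $R$ depending on $f$ and the ellipticity constant; hence $q_0$ is supported in $|\xi| \le R$ and, being smooth and compactly supported in $\xi$, it satisfies $|\partial^\beta_\xi q_0(x,\xi)| \le C_\beta \le C_\beta' \scal{\xi}^{-m}$ for every $\beta \in \N^d$ and every $m > d$ (the estimate is trivial off the support and uses boundedness of $q_0$ and its $\xi$-derivatives, which follows from $(\ref{assump bounded metric})$, on the support). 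Therefore $q_0$ meets the hypotheses of Proposition \ref{prop lq lr bounds}, and we get $\|Op_h(q_0)\|_{\Lc(L^q(\R^d),L^r(\R^d))} \le C h^{-(d/q - d/r)}$ for all $h \in (0,1]$.

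Combining the two estimates by the triangle inequality in $\Lc(L^q(\R^d),L^r(\R^d))$ yields the claimed bound $\|f(h^2P)\|_{\Lc(L^q,L^r)} \le C h^{-(d/q-d/r)}$. The only mild subtlety — and the step I would be most careful about — is verifying that the symbol bounds required by Proposition \ref{prop lq lr bounds} genuinely hold with the power $\scal{\xi}^{-m}$ for some $m > d$ uniformly in $x$; this is where one invokes both the ellipticity $(\ref{assump elliptic})$ (to localize the $\xi$-support of $q_0$, making any negative power of $\scal{\xi}$ available for free) and the uniform-in-$x$ bounds $(\ref{assump bounded metric})$ on the metric coefficients (to control the $x$-dependence of the constants $C_\beta$). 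Everything else is a routine triangle-inequality combination, and the $h$-exponents match because $1 - (d/q - d/r) \ge -(d/q - d/r)$ so the remainder contribution is harmless.
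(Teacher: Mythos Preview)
Your proposal is correct and matches the paper's approach: the paper simply states that the proposition is a direct consequence of Proposition~\ref{prop lq lr bounds} and Proposition~\ref{prop parametrix f}, which is exactly the combination you carry out (parametrix plus symbol bound on the leading term). Your explicit verification that $q_0=f\circ p$ has uniformly compact $\xi$-support via the ellipticity $(\ref{assump elliptic})$ is the right justification for applying Proposition~\ref{prop lq lr bounds}.
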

Next, we need the following version of the Littlewood-Paley decomposition (see e.g. \cite[Corollary 2.3]{BGT} or \cite[Proposition 2.10]{BTlocalstrichartz}). 
\begin{prop} \label{prop littlewood paley decomposition}
There exist $\varphi_0 \in C^\infty_0(\R)$ and $\varphi \in C^\infty_0(\R \backslash \{0\})$ such that
\[
\varphi_0(P)+ \sum_{h^{-1}: \emph{dya}} \varphi(h^2 P) = \emph{Id},
\]
where $h^{-1}:\emph{dya}$ means $h^{-1}=2^{k}, k \in \N \backslash \{0\}$. Moreover, for all $q \in [2,\infty)$, there exists $C>0$ such that for all $u \in \mathscr{S}(\R^d)$,
\[
\|u\|_{L^q(\R^d)} \leq C \Big(\sum_{h^{-1}:\emph{dya}} \|\varphi(h^2P)u\|^2_{L^q(\R^d)} \Big)^{1/2} + C\|u\|_{L^2(\R^d)}.
\]
\end{prop}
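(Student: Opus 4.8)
The plan is to derive the operator identity from the functional calculus of the self-adjoint operator $P$, and the $L^q$ square-function bound from a Rademacher randomisation that reduces everything to a uniform $L^q$-boundedness for dyadic functions of $P$.

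First I would construct the cut-offs. Pick $\varphi\in C_0^\infty(\R\setminus\{0\})$ supported in $\{1/4\le|\lambda|\le 4\}$ with $\sum_{k\in\Z}\varphi(2^{-2k}\lambda)=1$ for $\lambda\ne0$, and set $\varphi_0(\lambda):=1-\sum_{k\ge1}\varphi(2^{-2k}\lambda)$. For $|\lambda|$ large the summands with $k\le0$ vanish (they are supported in $\{|\lambda|\le4\}$), so the tail $\sum_{k\ge1}\varphi(2^{-2k}\lambda)$ equals $1$ there and $\varphi_0\in C_0^\infty(\R)$; moreover $\varphi_0(\lambda)+\sum_{k\ge1}\varphi(2^{-2k}\lambda)=1$ on $[0,\infty)\supseteq\mathrm{spec}(P)$. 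The functional calculus for $P$ then gives $\varphi_0(P)+\sum_{h^{-1}:\mathrm{dya}}\varphi(h^2P)=\mathrm{Id}$ with $h=2^{-k}$, the series converging strongly on $L^2(\R^d)$ by the spectral theorem.

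For the $L^q$ estimate, apply the identity to $u\in\mathscr{S}(\R^d)$: $u=\varphi_0(P)u+\sum_k\varphi(2^{-2k}P)u$. Proposition~\ref{prop lq lr bounds for f(h^2P)} with $h=1$ gives $\|\varphi_0(P)u\|_{L^q}\le C\|u\|_{L^2}$ when $q\ge2$. Fix $\tilde\varphi\in C_0^\infty(\R\setminus\{0\})$ with $\tilde\varphi\equiv1$ on $\mathrm{supp}\,\varphi$, put $\tilde P_k:=\tilde\varphi(2^{-2k}P)$ and $f_k:=\varphi(2^{-2k}P)u$, so that $\sum_k\varphi(2^{-2k}P)u=\sum_k\tilde P_kf_k$. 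Let $(r_k)$ be Rademacher functions on a probability space; since $\mathbb{E}[r_jr_l]=\delta_{jl}$ we have $\sum_k\tilde P_kf_k=\mathbb{E}\big[(\sum_jr_j\tilde P_j)(\sum_lr_lf_l)\big]$, hence, using Jensen's and Khintchine's inequalities on the second factor,
\[
\Big\|\sum_k\tilde P_kf_k\Big\|_{L^q}\le\Big(\sup_{\epsilon\in\{\pm1\}^\N}\Big\|\sum_j\epsilon_j\tilde P_j\Big\|_{\Lc(L^q)}\Big)\,C_q\,\Big\|\Big(\sum_l|f_l|^2\Big)^{1/2}\Big\|_{L^q}.
\]
For $q\ge2$, Minkowski's inequality in $L^{q/2}$ bounds the last factor by $(\sum_l\|f_l\|_{L^q}^2)^{1/2}$, and combined with the estimate for $\varphi_0(P)u$ this is exactly the asserted inequality --- \emph{provided} the supremum over sign sequences is finite.

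The main obstacle is precisely this uniform bound $\sup_\epsilon\|\sum_j\epsilon_j\tilde\varphi(2^{-2j}P)\|_{\Lc(L^q)}<\infty$ for $q\in(1,\infty)$; everything else is bookkeeping. I would obtain it from a Mihlin--Hörmander spectral multiplier theorem for $P$: the function $m_\epsilon(\lambda):=\sum_j\epsilon_j\tilde\varphi(2^{-2j}\lambda)$ is bounded, and since $\tilde\varphi$ is supported in a fixed annulus away from $0$ so that only boundedly many summands are nonzero near a given $\lambda$, it satisfies $\sup_\lambda|\lambda^n\partial_\lambda^nm_\epsilon(\lambda)|\le C_n$ uniformly in $\epsilon$; because $P=-\Delta_g$ is self-adjoint, uniformly elliptic and of second order with smooth bounded coefficients, its heat semigroup $e^{-tP}$ has Gaussian upper bounds, which classically implies $\|m(P)\|_{\Lc(L^q)}\lesssim\sup_{n\le N}\sup_\lambda|\lambda^n\partial_\lambda^nm(\lambda)|$ for $1<q<\infty$ and suitable $N=N(d)$. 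Alternatively one can stay within the pseudo-differential toolbox: by Proposition~\ref{prop parametrix f}, $\tilde\varphi(2^{-2j}P)$ equals $Op_{2^{-j}}(q_0)$ with $q_0=\tilde\varphi\circ p\in S(-\infty)$ plus an $\Lc(L^q)$-bounded remainder summable in $j$, and $\sum_j\epsilon_j(\tilde\varphi\circ p)(2^{-2j}\cdot)$ is a classical Mihlin-type symbol, so $\sum_j\epsilon_j\tilde P_j$ is a pseudo-differential operator bounded on $L^q$ uniformly in $\epsilon$. This Littlewood--Paley/spectral-multiplier input is the only genuinely nontrivial ingredient.
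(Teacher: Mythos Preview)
The paper does not give its own proof of this proposition; it merely cites \cite[Corollary 2.3]{BGT} and \cite[Proposition 2.10]{BTlocalstrichartz}. Your argument is correct and is essentially the standard one found in those references: a Rademacher/Khintchine randomisation reduces the square-function inequality to the uniform $L^q$-boundedness of $\sum_j\epsilon_j\tilde\varphi(2^{-2j}P)$, and the latter follows either from the pseudo-differential parametrix of Proposition~\ref{prop parametrix f} (this is closest in spirit to \cite{BGT,BTlocalstrichartz}, which reduce to a classical Mihlin symbol in $S(0)$) or from a H\"ormander spectral multiplier theorem via Gaussian heat-kernel bounds for $-\Delta_g$. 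Two cosmetic remarks: what you call ``Jensen's inequality'' when passing $\|\mathbb{E}[\cdot]\|_{L^q}\le\mathbb{E}\|\cdot\|_{L^q}$ is the Minkowski inequality for integrals; and in the parametrix route the higher-order terms $h^lOp_h(q_l)$, $l\ge1$, should be mentioned explicitly---they contribute $\sum_j 2^{-jl}O_{\Lc(L^q)}(1)$, hence are absolutely summable and harmless.
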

We end this subsection with the following reduction.
\begin{prop} \label{prop reduction of theorem local strichartz fractional schrodinger on manifolds}
Consider $\R^d, d\geq 1$ equipped with a smooth metric $g$ satisfying $(\ref{assump elliptic}), (\ref{assump bounded metric})$. Let $\sigma \in (0,\infty)\backslash \{1\}$ and $\varphi \in C^\infty_0(\R \backslash \{0\})$. If there exist $t_0>0$ small enough and $C>0$ such that for all $u_0 \in L^1(\R^d)$ and all $h \in (0,1]$,
\begin{align}
\|e^{ith^{-1}(h\Lambda_g)^\sigma} \varphi(h^2P)u_0\|_{L^\infty(\R^d)} \leq Ch^{-d}(1+|t|h^{-1})^{-d/2} \|u_0\|_{L^1(\R^d)}, \label{dispersive fractional schrodinger on manifolds}
\end{align}
for all $t\in [-t_0, t_0]$, then $\emph{Theorem}$ $\ref{theorem local strichartz fractional schrodinger on manifolds}$ holds true.
\end{prop}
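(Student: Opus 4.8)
Proposition~$\ref{prop reduction of theorem local strichartz fractional schrodinger on manifolds}$ is a reduction, so the plan is to feed the hypothesized dispersive bound $(\ref{dispersive fractional schrodinger on manifolds})$ into the abstract Strichartz machinery of Keel--Tao, rescale time, and reassemble with the Littlewood--Paley decomposition of Proposition~$\ref{prop littlewood paley decomposition}$. First I would fix the functions $\varphi_0\in C^\infty_0(\R)$, $\varphi\in C^\infty_0(\R\backslash\{0\})$ of Proposition~$\ref{prop littlewood paley decomposition}$ (so that $(\ref{dispersive fractional schrodinger on manifolds})$ is at our disposal for this $\varphi$) and split $e^{it\Lambda_g^\sigma}u_0$ into $e^{it\Lambda_g^\sigma}\varphi_0(P)u_0$ and the dyadic pieces $e^{it\Lambda_g^\sigma}\varphi(h^2P)u_0$. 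The low-frequency piece is harmless: by H\"older in time, unitarity of $e^{it\Lambda_g^\sigma}$ on $L^2$ and Proposition~$\ref{prop lq lr bounds for f(h^2P)}$ with $h=1$,
\[
\|e^{it\Lambda_g^\sigma}\varphi_0(P)u_0\|_{L^p(I,L^q(\R^d))}\lesssim |I|^{1/p}\sup_{t}\|\varphi_0(P)e^{it\Lambda_g^\sigma}u_0\|_{L^q(\R^d)}\lesssim|I|^{1/p}\|u_0\|_{L^2(\R^d)},
\]
which is $\lesssim\|u_0\|_{H^{\gamma_{pq}+(\sigma-1)/p}(\R^d)}$ since $\gamma_{pq}+(\sigma-1)/p=d/2-d/q-1/p\ge0$ on fractional admissible pairs (and $\gamma_{pq}\ge0$ likewise when $\sigma\in(0,1)$). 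Thus everything reduces to the dyadic bound
\[
\|e^{it\Lambda_g^\sigma}\varphi(h^2P)u_0\|_{L^p(I,L^q(\R^d))}\lesssim h^{-(\gamma_{pq}+(\sigma-1)/p)}\|u_0\|_{L^2(\R^d)}
\]
holding uniformly in dyadic $h^{-1}$, with $\gamma_{pq}$ in place of $\gamma_{pq}+(\sigma-1)/p$ when $\sigma\in(0,1)$: summing this over dyadic $h$ via Proposition~$\ref{prop littlewood paley decomposition}$, Minkowski's inequality (valid because $p,q\ge2$), the equivalence $h^{-s}\|\varphi(h^2P)u_0\|_{L^2}\sim\|\varphi(h^2P)u_0\|_{H^s}$ and the $L^2$ almost-orthogonality of the pieces recovers $\|u_0\|_{H^{\gamma_{pq}+(\sigma-1)/p}(\R^d)}$.

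Next I would upgrade $(\ref{dispersive fractional schrodinger on manifolds})$ to a semi-classical Strichartz estimate on $[-t_0,t_0]$. Put $W(t):=e^{ith^{-1}(h\Lambda_g)^\sigma}\varphi(h^2P)$. The functional calculus gives $\|W(t)\|_{\Lc(L^2(\R^d))}\lesssim1$, and $W(t)W(s)^*=e^{i(t-s)h^{-1}(h\Lambda_g)^\sigma}\varphi(h^2P)^2$; estimating one factor $\varphi(h^2P)$ by $(\ref{dispersive fractional schrodinger on manifolds})$ and the other from $L^1$ to $L^1$ by Proposition~$\ref{prop lq lr bounds for f(h^2P)}$ yields $\|W(t)W(s)^*\|_{\Lc(L^1(\R^d),L^\infty(\R^d))}\lesssim h^{-d}(1+|t-s|h^{-1})^{-d/2}$ for $t,s\in[-t_0,t_0]$ (shrinking $t_0$ if necessary so that the time argument stays in the interval where $(\ref{dispersive fractional schrodinger on manifolds})$ holds). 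The abstract Keel--Tao theorem then gives $\|W(t)u_0\|_{L^p([-t_0,t_0],L^q(\R^d))}\lesssim h^{-1/p}\|u_0\|_{L^2(\R^d)}$ for every Schr\"odinger admissible pair; the only forbidden endpoint there, $(p,q,d)=(2,\infty,2)$, is the one excluded by fractional admissibility.

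The key step is the time rescaling. Since $(h\Lambda_g)^\sigma=h^\sigma\Lambda_g^\sigma$ we have $W(t)=e^{ith^{\sigma-1}\Lambda_g^\sigma}\varphi(h^2P)$, so the substitution $s=th^{\sigma-1}$ turns the last estimate into
\[
\|e^{is\Lambda_g^\sigma}\varphi(h^2P)u_0\|_{L^p(J_h,L^q(\R^d))}\lesssim h^{(\sigma-1)/p}h^{-1/p}\|u_0\|_{L^2(\R^d)}=h^{-\gamma_{pq}}\|u_0\|_{L^2(\R^d)},\qquad |J_h|\sim h^{\sigma-1},
\]
using $\gamma_{pq}=(2-\sigma)/p$ on Schr\"odinger admissible pairs. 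If $\sigma\in(1,\infty)$ then $h^{\sigma-1}\le1$, so I cover the bounded interval $I$ by $\lesssim|I|h^{1-\sigma}+1$ intervals of length $\lesssim h^{\sigma-1}$; on each of them $e^{is\Lambda_g^\sigma}\varphi(h^2P)u_0$ equals $e^{i\tau\Lambda_g^\sigma}\varphi(h^2P)$ applied to a time-translate $e^{is_j\Lambda_g^\sigma}u_0$ of the datum of the same $L^2$ norm ($\varphi(h^2P)$ commuting with the propagator), so the displayed estimate applies to each piece, and summing the $p$-th powers produces the factor $h^{(1-\sigma)/p}$, i.e. exactly $h^{-(\gamma_{pq}+(\sigma-1)/p)}$. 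If $\sigma\in(0,1)$ then $h^{\sigma-1}\ge1$, so $I$ is already covered by $O(1+|I|)$ intervals of length $\lesssim h^{\sigma-1}$ and no power of $h$ is lost, giving $h^{-\gamma_{pq}}$. Finally, for a fractional admissible pair with $\frac2p+\frac dq<\frac d2$ I pass to a Schr\"odinger admissible one: choosing $q_*$ with $\frac2p+\frac d{q_*}=\frac d2$ one has $q_*\le q$, and with $\tilde\varphi\in C^\infty_0(\R\backslash\{0\})$ equal to $1$ near $\text{supp}\,\varphi$ (so that $\tilde\varphi(h^2P)$ fixes $\varphi(h^2P)v$) Proposition~$\ref{prop lq lr bounds for f(h^2P)}$ gives $\|e^{is\Lambda_g^\sigma}\varphi(h^2P)u_0\|_{L^q}\lesssim h^{-(d/q_*-d/q)}\|e^{is\Lambda_g^\sigma}\varphi(h^2P)u_0\|_{L^{q_*}}$; since $\gamma_{pq_*}+(d/q_*-d/q)=\gamma_{pq}$ the exponents match, and reassembling the dyadic pieces as in the first paragraph finishes the proof.

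I do not expect a genuinely hard analytic obstacle: this is a reduction, and all analytic input is either the hypothesis $(\ref{dispersive fractional schrodinger on manifolds})$ or the already established Propositions~$\ref{prop lq lr bounds}$--$\ref{prop littlewood paley decomposition}$. The one delicate point is bookkeeping --- getting the powers of $h$ right in the time rescaling and in the covering of $I$ so that the loss of $(\sigma-1)/p$ derivatives appears exactly when $\sigma>1$ and disappears when $\sigma<1$, and checking that the $\ell^2$-summation over dyadic $h$ really reconstitutes the $H^{\gamma_{pq}+(\sigma-1)/p}(\R^d)$ norm (which is where $p,q\ge2$, Minkowski, and the $L^2$ almost-orthogonality of the Littlewood--Paley pieces come in).
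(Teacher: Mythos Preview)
Your proposal is correct and follows essentially the same route as the paper: dispersive estimate $\Rightarrow$ Keel--Tao, rescale time, cover $I$ by intervals of length $\sim h^{\sigma-1}$ (accumulating a loss $h^{(1-\sigma)/p}$ when $\sigma>1$, none when $\sigma<1$), replace $\|u_0\|_{L^2}$ by $\|\varphi(h^2P)u_0\|_{L^2}$ via a fattened cut-off $\tilde\varphi$, then sum with Littlewood--Paley and almost-orthogonality.

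The one place you diverge from the paper is the treatment of non-sharp fractional admissible pairs. You apply Keel--Tao only to the Schr\"odinger admissible endpoint $(p,q_*)$ and then pass from $L^{q_*}$ to $L^q$ by a Bernstein inequality (Proposition~$\ref{prop lq lr bounds for f(h^2P)}$). This is fine, but it is an unnecessary detour: the version of the $TT^\star$ criterion stated in the paper as Theorem~$\ref{theorem TTstar criterion}$ already allows the full range $\frac{1}{p}\le\tau\bigl(\frac12-\frac1q\bigr)$ (here $\tau=d/2$), i.e.\ all fractional admissible pairs, and directly produces the exponent $\kappa=d\bigl(\tfrac12-\tfrac1q\bigr)-\tfrac1p$. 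After time rescaling this gives $h^{-\gamma_{pq}}$ for every fractional admissible $(p,q)$ in one stroke, with no Bernstein step. Either way the bookkeeping closes; your version just takes one extra line.
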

The proof of Proposition $\ref{prop reduction of theorem local strichartz fractional schrodinger on manifolds}$ bases on the following version of $TT^\star$-criterion (see \cite{KeelTaoTTstar}, \cite[Theorem 10.7]{Zworski} or \cite[Proposition 4.1]{Zhang}).
\begin{theorem} \label{theorem TTstar criterion}
Let $(X, \Mc, \mu)$ be a $\sigma$-finite measured space, and $T: \R \rightarrow \Bc(L^2(X,\Mc,\mu))$ be a weakly measurable map satisfying, for some constants $C, \gamma, \delta >0$, 
\begin{align}
\|T(t)\|_{L^2(X) \rightarrow L^2(X)} &\leq C, \quad t \in \R, \label{energy estimates} \\
\|T(t)T(s)^\star\|_{L^1(X) \rightarrow L^\infty(X)} &\leq C h^{-\delta}(1+|t-s|h^{-1})^{-\tau}, \quad t, s \in \R. \label{dispersive estimates}
\end{align} 
Then for all pair $(p,q)$ satisfying
\[
p\in [2, \infty], \quad q\in [1,\infty], \quad (p,q,\delta) \ne (2, \infty, 1), \quad \frac{1}{p}\leq \tau\Big(\frac{1}{2}-\frac{1}{q}\Big),
\]
one has
\[
\|T(t) u\|_{L^p(\R, L^q(X))} \leq C h^{-\kappa} \|u\|_{L^2(X)},
\]
where $\kappa=\delta(1/2-1/q)-1/p$. 
\end{theorem}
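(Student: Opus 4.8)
The strategy is to deduce Theorem \ref{theorem local strichartz fractional schrodinger on manifolds} from the dispersive bound $(\ref{dispersive fractional schrodinger on manifolds})$ in three moves: extract a frequency-localized (semiclassical) Strichartz estimate by the $TT^\star$ argument, rescale it in time, and reassemble via the Littlewood--Paley decomposition. Throughout I will use the identity $e^{ith^{-1}(h\Lambda_g)^\sigma}=e^{ith^{\sigma-1}\Lambda_g^\sigma}$, which holds because $(h\Lambda_g)^\sigma=h^\sigma\Lambda_g^\sigma$ and $\Lambda_g^\sigma=P^{\sigma/2}$; in particular this operator is unitary on $L^2(\R^d)$ and commutes with every function of $P$, hence with $\varphi(h^2P)$ and with $\scal{\Lambda_g}^\gamma$. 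I also assume, as is legitimate — it is exactly what the WKB construction of the next subsection produces — that $(\ref{dispersive fractional schrodinger on manifolds})$ holds with $\varphi$ replaced by any element of $C^\infty_0(\R\backslash\{0\})$, and I take $\varphi$, and an auxiliary $\tilde\varphi\in C^\infty_0(\R\backslash\{0\})$ with $\tilde\varphi\equiv1$ on $\text{supp}\,\varphi$ (so $\varphi(h^2P)\tilde\varphi(h^2P)=\varphi(h^2P)$), to be real-valued, with $\varphi$ as in Proposition \ref{prop littlewood paley decomposition}.

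Fix a fractional admissible pair $(p,q)$. I would first establish, for all $h\in(0,1]$, the semiclassical estimate
\[
\|e^{ith^{\sigma-1}\Lambda_g^\sigma}\varphi(h^2P)u\|_{L^p_t([-\tau_0,\tau_0],L^q(\R^d))}\leq C h^{-\kappa}\|u\|_{L^2(\R^d)},\qquad \kappa:=\tfrac d2-\tfrac dq-\tfrac 1p=\gamma_{pq}+\tfrac{\sigma-1}{p},
\]
with $\tau_0>0$ fixed independently of $h$. Since $(\ref{dispersive fractional schrodinger on manifolds})$ is only given for $|t|\le t_0$, I would apply Theorem \ref{theorem TTstar criterion} to the truncation $T(t):=\chi(t)e^{ith^{\sigma-1}\Lambda_g^\sigma}\varphi(h^2P)$, where $\chi\in C^\infty_0((-t_0/2,t_0/2))$ equals $1$ on $[-\tau_0,\tau_0]$ with $\tau_0:=t_0/4$. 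Then $\|T(t)\|_{L^2\to L^2}\le\|\varphi\|_{L^\infty}$ by unitarity, while $T(t)T(s)^\star=\chi(t)\overline{\chi(s)}\,e^{i(t-s)h^{\sigma-1}\Lambda_g^\sigma}\varphi^2(h^2P)$ vanishes unless $t,s\in(-t_0/2,t_0/2)$, in which case $|t-s|<t_0$ and $(\ref{dispersive fractional schrodinger on manifolds})$ applied to $\varphi^2$ gives $\|T(t)T(s)^\star\|_{L^1\to L^\infty}\le Ch^{-d}(1+|t-s|h^{-1})^{-d/2}$ for all $t,s\in\R$. Theorem \ref{theorem TTstar criterion} then applies with $\delta=d$ and $\tau=d/2$ — the condition $\tfrac1p\le\tau(\tfrac12-\tfrac1q)$ being precisely fractional admissibility $\tfrac2p+\tfrac dq\le\tfrac d2$, and the excluded endpoint being vacuous since $q<\infty$ — yielding $\|T(t)u\|_{L^p(\R,L^q)}\le Ch^{-\kappa}\|u\|_{L^2}$ with $\kappa=\delta(\tfrac12-\tfrac1q)-\tfrac1p$; restricting to $[-\tau_0,\tau_0]$ gives the claim.

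Next I would rescale time by $s=th^{\sigma-1}$. With $J_h:=[-\tau_0 h^{\sigma-1},\tau_0 h^{\sigma-1}]$ the substitution turns the previous bound into $\|e^{is\Lambda_g^\sigma}\varphi(h^2P)u\|_{L^p_s(J_h,L^q)}\le Ch^{(\sigma-1)/p-\kappa}\|u\|_{L^2}=Ch^{-\gamma_{pq}}\|u\|_{L^2}$, and, using the group law $e^{i(s_0+s)\Lambda_g^\sigma}\varphi(h^2P)=e^{is\Lambda_g^\sigma}\varphi(h^2P)e^{is_0\Lambda_g^\sigma}$ and unitarity, the same bound (same constant) holds on every translate $s_0+J_h$. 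Applying this with $u=\tilde\varphi(h^2P)u_0$, covering the fixed bounded interval $I$ by $N_h\le 1+|I|(2\tau_0)^{-1}h^{1-\sigma}$ translates of $J_h$, and summing $p$-th powers gives
\[
\|e^{is\Lambda_g^\sigma}\varphi(h^2P)u_0\|_{L^p_s(I,L^q)}\leq C\,N_h^{1/p}h^{-\gamma_{pq}}\|\tilde\varphi(h^2P)u_0\|_{L^2}\leq C h^{-\gamma}\|\tilde\varphi(h^2P)u_0\|_{L^2},
\]
where for $\sigma\in(1,\infty)$ one has $h^{1-\sigma}\ge1$, so $N_h^{1/p}\lesssim h^{-(\sigma-1)/p}$ and one sets $\gamma:=\gamma_{pq}+(\sigma-1)/p$, whereas for $\sigma\in(0,1)$ one has $h^{1-\sigma}\le1$, so $N_h\lesssim1$ and one sets $\gamma:=\gamma_{pq}$. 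This bookkeeping of the powers of $h$, and the split between the two regimes for $\sigma$, is the delicate point; I expect it to be the main obstacle, the rest being routine.

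Finally I would sum over dyadic $h$. Applying Proposition \ref{prop littlewood paley decomposition} pointwise in $s$, commuting $\varphi(h^2P)$ past $e^{is\Lambda_g^\sigma}$, and using Minkowski's inequality (valid since $p\ge2$), one gets
\[
\|e^{is\Lambda_g^\sigma}u_0\|_{L^p_s(I,L^q)}\leq C\Big(\sum_{h^{-1}:\emph{dya}}\|e^{is\Lambda_g^\sigma}\varphi(h^2P)u_0\|^2_{L^p_s(I,L^q)}\Big)^{1/2}+C|I|^{1/p}\|u_0\|_{L^2}.
\]
By the previous step the sum is $\le C\sum_{h^{-1}:\emph{dya}} h^{-2\gamma}\|\tilde\varphi(h^2P)u_0\|^2_{L^2}=C\scal{m(P)u_0,u_0}$, where $m(\lambda):=\sum_{h^{-1}:\emph{dya}}h^{-2\gamma}\tilde\varphi(h^2\lambda)^2$; for each $\lambda\ge0$ only $O(1)$ terms are nonzero (the constraint $h^2\lambda\in\text{supp}\,\tilde\varphi$ confines $h$ to a fixed dyadic band), each $\lesssim\lambda^\gamma\lesssim\scal{\sqrt\lambda}^{2\gamma}$ since $\gamma\ge0$, so $m(\lambda)\le C\scal{\sqrt\lambda}^{2\gamma}$ and hence $\scal{m(P)u_0,u_0}\le C\|\scal{\Lambda_g}^\gamma u_0\|_{L^2}^2=C\|u_0\|_{H^\gamma}^2$. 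Since $\gamma\ge0$ in both regimes — indeed $\gamma_{pq}+(\sigma-1)/p=\tfrac d2-\tfrac dq-\tfrac1p\ge\tfrac1p$ when $\sigma>1$, and $\gamma_{pq}\ge\tfrac{2-\sigma}{p}>0$ when $\sigma<1$, both by fractional admissibility — one also has $\|u_0\|_{L^2}\lesssim\|u_0\|_{H^\gamma}$, and collecting the two terms yields $(\ref{homogeneous strichartz fractional schrodinger on manifolds})$ with exponent $\gamma_{pq}+(\sigma-1)/p$ when $\sigma\in(1,\infty)$ and with $\gamma_{pq}$ when $\sigma\in(0,1)$, i.e.\ Theorem \ref{theorem local strichartz fractional schrodinger on manifolds}.
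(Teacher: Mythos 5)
Your write-up does not prove the statement it was supposed to prove. The assigned statement is the abstract $TT^\star$ criterion (Theorem \ref{theorem TTstar criterion}): from the uniform $L^2\to L^2$ bound and the $L^1\to L^\infty$ decay of $T(t)T(s)^\star$ one must deduce the semiclassical Strichartz bound with loss $h^{-\kappa}$, $\kappa=\delta(1/2-1/q)-1/p$. What you have written instead is the derivation of Theorem \ref{theorem local strichartz fractional schrodinger on manifolds} from the dispersive estimate $(\ref{dispersive fractional schrodinger on manifolds})$ --- truncation in time, rescaling $s=th^{\sigma-1}$, translation by the group law, summation of the $O(h^{1-\sigma})$ intervals, and Littlewood--Paley reassembly --- which is precisely the content of Proposition \ref{prop reduction of theorem local strichartz fractional schrodinger on manifolds} in the paper (and your version of it is essentially the paper's own argument, with the time truncation and almost orthogonality spelled out a bit more carefully). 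Crucially, in the middle of your argument you \emph{invoke} Theorem \ref{theorem TTstar criterion} with $\delta=d$, $\tau=d/2$ as a black box, so as a proof of that theorem your text is circular: the one statement that needed an argument is exactly the one you assumed.

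What is missing is the actual $TT^\star$ machinery (in the paper this is only quoted, from Keel--Tao, Zworski, and Zhang): interpolate $(\ref{energy estimates})$ and $(\ref{dispersive estimates})$ by Riesz--Thorin to get
\[
\|T(t)T(s)^\star\|_{L^{q'}(X)\to L^q(X)}\leq C h^{-\delta(1-2/q)}\bigl(1+|t-s|h^{-1}\bigr)^{-\tau(1-2/q)};
\]
then, by duality, reduce $\|T(t)u\|_{L^p(\R,L^q)}\lesssim h^{-\kappa}\|u\|_{L^2}$ to the bilinear estimate for $\iint \langle T(t)T(s)^\star F(s),G(t)\rangle\, ds\, dt$, and bound the resulting time convolution: in the non-sharp case $\frac1p<\tau(\frac12-\frac1q)$ by Young's inequality (the kernel $(1+|r|h^{-1})^{-\tau(1-2/q)}$ is integrable or lies in the appropriate weak Lebesgue class, and integrating it produces exactly the extra power of $h$ that turns $h^{-\delta(1-2/q)}$ into $h^{-2\kappa}$), in the sharp case $\frac1p=\tau(\frac12-\frac1q)$ with $p>2$ by the Hardy--Littlewood--Sobolev inequality, and at the endpoint $p=2$ (excluded only when $(q,\delta)=(\infty,1)$) by the Keel--Tao dyadic bilinear argument. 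Alternatively one can rescale $t\mapsto ht$ to reduce to the case $h=1$ and track the homogeneity, which is the cleanest way to see where $\kappa=\delta(1/2-1/q)-1/p$ comes from. None of this appears in your proposal, so the assigned statement remains unproved.
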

\noindent \textit{Proof of Proposition $\ref{prop reduction of theorem local strichartz fractional schrodinger on manifolds}$.}  Using the energy estimates and dispersive estimates $(\ref{dispersive fractional schrodinger on manifolds})$, we can apply Theorem $\ref{theorem TTstar criterion}$ for $T(t)=\mathds{1}_{[-t_0,t_0]}(t)e^{ith^{-1}(h\Lambda_g)^\sigma} \varphi(h^2P), \delta=d, \tau=d/2$ and get
\begin{align}
\|e^{ith^{-1}(h\Lambda_g)^\sigma} \varphi(h^2P)u_0\|_{L^p([-t_0,t_0],L^q(\R^d))} \leq C h^{-(d/2-d/q-1/p)} \|u_0\|_{L^2(\R^d)}. \nonumber 
\end{align}
By scaling in time, we have
\begin{align}
\|e^{it\Lambda_g^\sigma} \varphi(h^2P) u_0\|_{L^p(h^{\sigma-1}[-t_0,t_0], L^q(\R^d))} &= h^{(\sigma-1)/p} \|e^{ith^{-1}(h\Lambda_g)^\sigma} \varphi(h^2P) u_0\|_{L^p([-t_0,t_0], L^q(\R^d))} \nonumber \\
&\leq C h^{-\gamma_{pq}} \|u_0\|_{L^2(\R^d)}. \label{reduce strichartz semiclassical}
\end{align}
Using the group property and the unitary property of Schr\"odinger operator $e^{it\Lambda_g^\sigma}$, we have the same estimates as in $(\ref{reduce strichartz semiclassical})$ for all intervals of size $2h^{\sigma-1}$. Indeed, for any interval $I_h$ of size $2h^{\sigma-1}$, we can write $I_h=[c-h^{\sigma-1}t_0,c+h^{\sigma-1}t_0]$ for some $c \in \R$ and
\begin{align}
\|e^{it\Lambda_g^\sigma} \varphi(h^2P) u_0\|_{L^p(I_h, L^q(\R^d))} &=\|e^{it\Lambda_g^\sigma}\varphi(h^2P)e^{ic\Lambda_g^\sigma}u_0\|_{L^p\left(h^{\sigma-1}[-t_0,t_0];L^q(\R^d)\right)} \nonumber \\
&\leq C h^{-\gamma_{pq}}\|e^{ic\Lambda_g^\sigma}u_0\|_{L^2(\R^d)} = Ch^{-\gamma_{pq}}\|u_0\|_{L^2(\R^d)}.\nonumber 
\end{align}
In the case $\sigma \in (1,\infty)$, we use a trick given in \cite{BGT}, i.e. cumulating $O(h^{1-\sigma})$ estimates on intervals of length $2h^{\sigma-1}$ to get estimates on any finite interval $I$. Precisely, by writing $I$ as a union of $N$ intervals $I_h$ of length $2h^{\sigma-1}$ with $N \lesssim h^{1-\sigma}$, we have
\begin{align}
\|e^{it\Lambda_g^\sigma} \varphi(h^2P) u_0\|_{L^p(I, L^q(\R^d))}\leq &\Big(\sum_{I_h} \int_{I_h} \|e^{it\Lambda_g^\sigma} \varphi(h^2P) u_0\|^p_{L^q(\R^d)} dt \Big)^{1/p}  \nonumber \\
\leq & C  N^{1/p}h^{-\gamma_{pq}} \|u_0\|_{L^2(\R^d)} \leq C h^{-\gamma_{pq}-(\sigma-1)/p} \|u_0\|_{L^2(\R^d)}. \label{reduce strichartz semiclassical 1}
\end{align}
In the case $\sigma \in (0,1)$, we can obviously bound the estimates over time intervals of size 1 by the ones of size $h^{\sigma-1}$ and obtain
\begin{align}
\|e^{it\Lambda_g^\sigma} \varphi(h^2P) u_0\|_{L^p(I, L^q(\R^d))} \leq Ch^{-\gamma_{pq}} \|u_0\|_{L^2(\R^d)}. \label{reduce strichartz semiclassical 2}
\end{align}
Moreover, we can replace the norm $\|u_0\|_{L^2(\R^d)}$ in the right hand side of $(\ref{reduce strichartz semiclassical 1})$ and $(\ref{reduce strichartz semiclassical 2})$ by $\|\varphi(h^2P)u_0\|_{L^2(\R^d)}$. Indeed, by choosing $\tilde{\varphi}\in C^\infty_0(\R \backslash \{0\})$ satisfying $\tilde{\varphi}=1$ near $\text{supp}(\varphi)$, we can write
\[
e^{ith^{-1}(h\Lambda_g)^\sigma} \varphi(h^2P)u_0 =e^{ith^{-1}(h\Lambda_g)^\sigma} \tilde{\varphi}(h^2P)\varphi(h^2P)u_0
\]
and apply $(\ref{reduce strichartz semiclassical 1})$ and $(\ref{reduce strichartz semiclassical 2})$ with $\tilde{\varphi}$ in place of $\varphi$. Now, by using the Littlewood-Paley decomposition given in Proposition $\ref{prop littlewood paley decomposition}$ and the Minkowski inequality, we have for all $(p,q)$ Schr\"odinger admissible,
\begin{align}
\|u\|_{L^p(I, L^q(\R^d))} \leq C \Big(\sum_{h^{-1}:\text{dya}} \|\varphi(h^2P)u\|^2_{L^p(I,L^q(\R^d))} \Big)^{1/2} + C\|u\|_{L^p(I,L^2(\R^d))}. \label{littlewood paley reduction}
\end{align}
We now apply $(\ref{littlewood paley reduction})$ for $u =e^{it\Lambda_g^\sigma} u_0$ together with $(\ref{reduce strichartz semiclassical 1})$ and get for $\sigma \in (1,\infty)$,
\[
\|e^{it\Lambda_g^\sigma}u_0\|_{L^p(I,L^q(\R^d))} \leq C \Big(\sum_{h^{-1}:\text{dya}} h^{-2(\gamma_{pq}+(\sigma-1)/p)}\|\varphi(h^2P)u_0\|^2_{L^2(\R^d)} \Big)^{1/2} + C\|u_0\|_{L^2(\R^d)}.
\]
Here the boundedness of $I$ is crucial to have a bound on the second term in the right hand side of $(\ref{littlewood paley reduction})$. The almost orthogonality and the fact that $\gamma_{pq}+(\sigma-1)/p \geq 1/p$ imply for $\sigma \in (1,\infty)$,
\[
\|e^{it\Lambda_g^\sigma}u_0\|_{L^p(I,L^q(\R^d))} \leq C \|u_0\|_{H^{\gamma_{pq}+(\sigma-1)/p}(\R^d)}.
\]
Similar results hold for $\sigma \in (0,1)$ with $\gamma_{pq}$ in place of $\gamma_{pq}+(\sigma -1)/p$ by using $(\ref{reduce strichartz semiclassical 2})$ instead of $(\ref{reduce strichartz semiclassical 1})$. This completes the proof.
\defendproof
\subsection{The WKB approximation} \label{subsection wkb approximation}
This subsection is devoted to the proof of dispersive estimates $(\ref{dispersive fractional schrodinger on manifolds})$. To do so, we will use the so called WKB approximation (see \cite{BGT}, \cite{BTlocalstrichartz}, \cite{Kapitanski} or \cite{Robert}), i.e. to approximate $e^{ith^{-1}(h\Lambda_g)^\sigma} \varphi(h^2P)$ in terms of Fourier integral operators. The following result is the main goal of this subsection. Let us denote $U_h(t):= e^{ith^{-1}(h\Lambda_g)^\sigma}$ for simplifying the presentation.
\begin{theorem}\label{theorem wkb fractional schrodinger equation}
Let $\sigma \in (0,\infty)\backslash\{1\}, \varphi \in C^\infty_0(\R \backslash \{0\})$, $J$ a small neighborhood of $\emph{supp}(\varphi)$ not containing the origin, $a \in S(-\infty)$ with $\emph{supp}(a)\subset p^{-1}(\emph{supp}(\varphi))$. Then there exist $t_0>0$ small enough, $S \in C^\infty([-t_0,t_0]\times \R^{2d})$ and a sequence of functions $a_{j}(t,\cdot,\cdot) \in S(-\infty)$ satisfying $\emph{supp}(a_j(t,\cdot,\cdot)) \subset p^{-1}(J)$ uniformly with respect to $t \in [-t_0,t_0]$ such that for all $N \geq 1$,
\[
U_h(t) Op_h(a) u_0 = J_N(t) u_0 + R_N(t)u_0,
\]
where 
\begin{align*}
J_N(t)u_0(x)&= \sum_{j=0}^{N-1} h^j J_h(S(t), a_j(t)) u_0 (x) \\
&= \sum_{j=0}^{N-1}h^j \left[(2\pi h)^{-d} \iint_{\R^{2d}} e^{ih^{-1}(S(t,x,\xi)-y\xi)}  a_{j}(t,x,\xi) u_0(y)dyd\xi\right],
\end{align*}
$J_N(0)=Op_h(a)$ and the remainder $R_N(t)$ satisfies for all $t\in [-t_0,t_0]$ and all $h\in (0,1]$,
\begin{align}
\|R_N(t)\|_{\Lc(L^2(\R^d))} \leq C h^{N-1}. \label{estimate remainder wkb}
\end{align}
Moreover, there exists a constant $C>0$ such that for all $t\in [-t_0,t_0]$ and all $h \in (0,1]$,
\begin{align}
\|J_N(t)\|_{\Lc(L^1(\R^d), L^\infty(\R^d))}  \leq  Ch^{-d}(1+|t|h^{-1})^{-d/2}. \label{dispersive estimate kwb fractional schrodinger}
\end{align}
\end{theorem}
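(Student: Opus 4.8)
The plan is to reduce $U_h(t)\,Op_h(a)$ to a semi-classical propagator with compactly supported symbol and then build a WKB (geometric optics) parametrix for it. Fix $\tilde\varphi\in C^\infty_0(\R\backslash\{0\})$ equal to $1$ near $\mathrm{supp}(\varphi)$ with $\mathrm{supp}(\tilde\varphi)\subset J$, and $\psi\in C^\infty_0(\R\backslash\{0\})$ equal to $\sqrt{\lambda}^\sigma$ on a neighborhood of $\mathrm{supp}(\tilde\varphi)$; then $e^{ith^{-1}\psi(h^2P)}\tilde\varphi(h^2P)=U_h(t)\tilde\varphi(h^2P)$ by functional calculus. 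Since $\mathrm{supp}(a)\subset p^{-1}(\mathrm{supp}\varphi)$ and $\tilde\varphi\circ p\equiv 1$ on a neighborhood of $\mathrm{supp}(a)$, a standard elliptic-localization argument based on Proposition~\ref{prop parametrix f} gives $(\mathrm{Id}-\tilde\varphi(h^2P))\,Op_h(a)=O_{\Lc(L^2)}(h^\infty)$; using the unitarity of both propagators on $L^2$ we obtain $U_h(t)\,Op_h(a)=e^{ith^{-1}\psi(h^2P)}\,Op_h(a)+O_{\Lc(L^2)}(h^\infty)$, uniformly in $t$ and with equality at $t=0$. Applying Proposition~\ref{prop parametrix f} to $\psi$ itself, I expand $\psi(h^2P)=\sum_{j=0}^{M-1}h^j Op_h(q_j)+h^M R_M(h)$, with principal symbol $q_0=\psi\circ p$ and $\mathrm{supp}(q_j)\subset p^{-1}(\mathrm{supp}\psi)$, for an $M$ to be fixed large at the end.

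Next, the WKB ansatz. I look for $J_N(t)=\sum_{j=0}^{N-1}h^j J_h(S(t),a_j(t))$ with $S(0,x,\xi)=x\cdot\xi$, $a_0(0)=a$, $a_j(0)=0$ for $j\ge1$ (so that $J_N(0)=Op_h(a)$ automatically), solving $(ih\partial_t+\psi(h^2P))J_N(t)=O_{\Lc(L^2)}(h^N)$. Substituting the ansatz and using the symbolic calculus for the action of a semi-classical pseudo-differential operator on an FIO of the form $J_h(S,\cdot)$ (as in \cite{BGT}), one collects powers of $h$. The $O(h^0)$ term is the eikonal (Hamilton--Jacobi) equation $\partial_t S(t,x,\xi)=q_0(x,\nabla_x S(t,x,\xi))$, $S(0,x,\xi)=x\cdot\xi$. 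Since $q_0=\psi\circ p$ is smooth and supported where $p(x,\xi)$ lies in a compact subset of $(0,\infty)$ — hence bounded in $\xi$ and bounded away from $\xi=0$, with all $x$-derivatives controlled by \eqref{assump elliptic}, \eqref{assump bounded metric} — the method of characteristics yields, for $t_0$ small enough, a unique $S\in C^\infty([-t_0,t_0]\times\R^{2d})$ such that $S(t,x,\xi)-x\cdot\xi$ lies in a fixed symbol class and the mixed Hessian $\partial^2_{x\xi}S(t,x,\xi)$ is uniformly close to the identity. The $O(h^j)$ terms, $j\ge1$, give linear first-order transport equations for $a_j$ along the Hamiltonian flow of $q_0$, with source terms built from $S$, $q_1,\dots,q_j$ and $a_0,\dots,a_{j-1}$; integrating along characteristics produces $a_j(t,\cdot,\cdot)\in S(-\infty)$. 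The support property $\mathrm{supp}(a_j(t))\subset p^{-1}(J)$ is inherited from $\mathrm{supp}(a)\subset p^{-1}(\mathrm{supp}\varphi)$ because $q_0$ is a function of $p$, so $p$ is constant along its Hamiltonian flow; taking $J$ a fixed neighborhood of $\mathrm{supp}(\varphi)$ absorbs the drift in $x$.

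For \eqref{estimate remainder wkb}: by construction the WKB residual $\mathcal{E}_N(t):=(ih\partial_t+\psi(h^2P))J_N(t)$ equals $h^N J_h(S(t),r_N(t))+O_{\Lc(L^2)}(h^M)$ with $r_N(t)\in S(-\infty)$, provided $M\ge N$; since $\partial^2_{x\xi}S$ is close to the identity, the $L^2$-boundedness of semi-classical FIOs of this form (as in \cite{BGT}) gives $\|\mathcal{E}_N(t)\|_{\Lc(L^2)}\lesssim h^N$ on $[-t_0,t_0]$. Writing $w(t):=e^{ith^{-1}\psi(h^2P)}\,Op_h(a)-J_N(t)$, which solves $(ih\partial_t+\psi(h^2P))w=-\mathcal{E}_N$ with $w(0)=0$, Duhamel's formula together with the unitarity of $e^{ish^{-1}\psi(h^2P)}$ and $|t|\le t_0$ gives $\|w(t)\|_{\Lc(L^2)}\le h^{-1}\int_0^{|t|}\|\mathcal{E}_N(s)\|_{\Lc(L^2)}\,ds\lesssim h^{N-1}$. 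Adding the $O(h^\infty)$ error of the first paragraph (which vanishes at $t=0$) produces $R_N(t):=U_h(t)\,Op_h(a)-J_N(t)$ with $R_N(0)=0$ and $\|R_N(t)\|_{\Lc(L^2)}\lesssim h^{N-1}$.

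It remains to prove the dispersive bound \eqref{dispersive estimate kwb fractional schrodinger}, which reduces to bounding, uniformly in $x,y$, each Schwartz kernel $K_j(t,x,y)=(2\pi h)^{-d}\int_{\R^d} e^{ih^{-1}(S(t,x,\xi)-y\cdot\xi)}a_j(t,x,\xi)\,d\xi$ by $Ch^{-d}(1+|t|h^{-1})^{-d/2}$. From the eikonal equation, $S(t,x,\xi)=x\cdot\xi+tq_0(x,\xi)+O(t^2)$, so $\partial^2_\xi S(t,x,\xi)=t\,\partial^2_\xi q_0(x,\xi)+O(t^2)$. Here is where $\sigma\ne1$ enters: on $\mathrm{supp}(a_j(t))$ one has $q_0(x,\xi)=(\xi^tG(x)\xi)^{\sigma/2}$ and a direct computation gives $\det\partial^2_\xi q_0(x,\xi)=(\sigma-1)\,\sigma^d\,\det G(x)\,p(x,\xi)^{d(\sigma-2)/2}$, which by \eqref{assump elliptic} is bounded away from $0$ in absolute value uniformly on $\mathrm{supp}(a_j(t))$ precisely because $\sigma\ne1$. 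Hence for $t_0$ small, $\partial^2_\xi S(t,x,\xi)$ is invertible with $|\det\partial^2_\xi S|\sim|t|^d$ uniformly on $\mathrm{supp}(a_j(t))$. A non-stationary/stationary phase analysis of the $\xi$-integral then finishes: integrating by parts when $|\nabla_\xi S(t,x,\xi)-y|$ is bounded below on $\mathrm{supp}(a_j(t))$, and otherwise applying the quantitative stationary phase estimate with the Hessian bound above (exactly as in \cite{BGT}), one gets $|K_j(t,x,y)|\lesssim h^{-d}\min\{1,(|t|h^{-1})^{-d/2}\}\lesssim h^{-d}(1+|t|h^{-1})^{-d/2}$; summing over the finitely many $j$ yields \eqref{dispersive estimate kwb fractional schrodinger}. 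I expect the main difficulty to lie in organizing the WKB hierarchy directly from the functional-calculus parametrix of $\psi(h^2P)$ — which, unlike the Schr\"odinger case $\sigma=2$, is not a semi-classical differential operator — keeping every amplitude in $S(-\infty)$ with the correct support, together with the verification of the non-degeneracy of $\partial^2_\xi q_0$ that underlies the $|t|^{-d/2}$ decay.
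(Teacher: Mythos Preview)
Your proposal is correct and follows essentially the same route as the paper: replace $(h\Lambda_g)^\sigma$ by $\psi(h^2P)$ via functional calculus, expand $\psi(h^2P)$ as a semi-classical pseudo-differential operator, run the WKB hierarchy (Hamilton--Jacobi for $S$, transport for the $a_j$), bound the remainder by Duhamel and the $L^2$-boundedness of FIOs with $\partial^2_{x\xi}S$ close to the identity, and obtain the dispersive bound by stationary phase exploiting the nondegeneracy of $\partial^2_\xi q_0$ when $\sigma\ne1$. The only cosmetic differences are that the paper performs the change of variables $\eta=\sqrt{G(x)}\xi$ before the stationary-phase step (so the Hessian becomes that of $|\eta|^\sigma$ rather than of $(\xi^tG(x)\xi)^{\sigma/2}$, which you compute directly and correctly as $\sigma^d(\sigma-1)\det G\,p^{d(\sigma-2)/2}$), and that the paper justifies $\mathrm{supp}(a_j(t))\subset p^{-1}(J)$ by a direct small-time estimate on the transport characteristics rather than by invoking conservation of $p$ under the flow of $q_0$; note that the transport characteristics are in $x$ with $\xi$ frozen, not the full Hamiltonian flow, so your conservation remark is a heuristic and the actual containment uses that the flow moves $x$ by $O(t)$.
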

\begin{rem} \label{rem dispersive estimate wkb}
Before entering to the proof of Theorem $\ref{theorem wkb fractional schrodinger equation}$, let us show that Theorem $\ref{theorem wkb fractional schrodinger equation}$ implies $(\ref{dispersive fractional schrodinger on manifolds})$. We firstly note that the study of dispersive estimates for $U_h(t) \varphi(h^2P)$ is reduced to the one of $U_h(t)Op_h(a)$ with $a \in S(-\infty)$ satisfying $\text{supp}(a) \subset p^{-1}(\text{supp}(\varphi))$. Indeed, by using the parametrix of $\varphi(h^2P)$ given in Proposition $\ref{prop parametrix f}$, we have for all $N \geq 1$,
\[
\varphi(h^2P) = \sum_{j=0}^{N-1} h^j Op_h(\tilde{q}_j) + h^N \tilde{R}_N(h),
\]
for some $\tilde{q}_j \in S(-\infty)$ satisfying $\text{supp}(\tilde{q}_j) \subset p^{-1}(\text{supp}(\varphi))$ and the remainder satisfies for all $m\geq 0$,
\[
\|\tilde{R}_N(h)\|_{\Lc(H^{-m}(\R^d), H^m(\R^d))} \leq C h^{-2m}.
\]
Since $U_h(t)$ is bounded  in $H^m(\R^d)$, the Sobolev embedding with $m > d/2$ implies
\[
\|U_h(t) \tilde{R}_N(h)\|_{\Lc(L^1(\R^d), L^\infty(\R^d))} \leq \|U_h(t) \tilde{R}_N(h)\|_{\Lc(H^{-m}(\R^d), H^m(\R^d))} \leq C h^{-2m}. 
\]
By choosing $N$ large enough, the remainder term is bounded in $\Lc(L^1(\R^d), L^\infty(\R^d))$ independent of $t,h$. We next show that Theorem $\ref{theorem wkb fractional schrodinger equation}$ gives dispersive estimates for $U_h(t)Op_h(a)$, i.e.
\begin{align}
\|U_h(t)Op_h(a)\|_{\Lc(L^1(\R^d), L^\infty(\R^d))}  \leq  Ch^{-d}(1+|t|h^{-1})^{-d/2}, \label{dispersive wkb remark}
\end{align}
for all $h\in (0,1]$ and all $t\in [-t_0,t_0]$. Indeed, by choosing $\tilde{\tilde{\varphi}} \in C^\infty_0(\R \backslash \{0\})$ which satisfies $\tilde{\tilde{\varphi}}=1$ near $\text{supp}(\varphi)$, we can write
\begin{align} 
U_h(t) Op_h(a)= \tilde{\tilde{\varphi}}(h^2P) U_h(t) Op_h(a) \tilde{\tilde{\varphi}}(h^2P) &+ (1-\tilde{\tilde{\varphi}})(h^2P) U_h(t) Op_h(a) \tilde{\tilde{\varphi}}(h^2P) \nonumber \\
  &+ U_h(t) Op_h(a) (1-\tilde{\tilde{\varphi}})(h^2P). \label{conclude dispersive wkb}
\end{align}
Using Theorem $\ref{theorem wkb fractional schrodinger equation}$, the first term is written as
\[
\tilde{\tilde{\varphi}}(h^2P) U_h(t) Op_h(a) \tilde{\tilde{\varphi}}(h^2P) = \tilde{\tilde{\varphi}}(h^2P) J_N(t) \tilde{\tilde{\varphi}}(h^2P) + \tilde{\tilde{\varphi}}(h^2P) R_N(t) \tilde{\tilde{\varphi}}(h^2P).
\] 
We learn from Proposition $\ref{prop parametrix f}$ and $(\ref{dispersive estimate kwb fractional schrodinger})$ that the first term in the right hand side is of size $O_{\Lc(L^1(\R^d), L^\infty(\R^d))}( h^{-d}(1+|t|h^{-1})^{-d/2})$ and the second one is of size $O_{\Lc(L^1(\R^d), L^\infty(\R^d))}( h^{N-1-d})$. For the second and the third term of $(\ref{conclude dispersive wkb})$, we compose to the left and the right hand side with $(P+1)^m$ for $m\geq 0$ and use the parametrix of $(1-\tilde{\tilde{\varphi}})(h^2P)$. By composing pseudo-differential operators with disjoint supports, we obtain terms of size $O_{\Lc(L^2(\R^d))}(h^\infty)$. The Sobolev embedding with $m > d/2$ implies that the second and the third terms are of size $O_{\Lc(L^1(\R^d), L^\infty(\R^d))}(h^\infty)$. By choosing $N$ large enough, we have $(\ref{dispersive wkb remark})$. 
\end{rem}
\noindent \textit{Proof of Theorem $\ref{theorem wkb fractional schrodinger equation}$.} Let us explain the strategy of the proof. As mentioned in the introduction, the main difficulty is that we do not have the exact form of the semi-classical fractional Laplace-Beltrami operator, namely $(h\Lambda_g)^\sigma$, in order to use the usual construction of \cite{BGT}. Fortunately, thanks to the support of the symbol $a$, we can replace $U_h(t)$ by $e^{ith^{-1} \psi(h^2P)}$ for some smooth, compactly supported function $\psi$. The interest of this replacement is that one can approximate $\psi(h^2P)$ in terms of pseudo-differential operators. We next use the action of pseudo-differential operators on Fourier integral operators and collect the powers of the semi-classical parameter $h$ to yield the Hamilton-Jacobi equation for the phase and a system of transport equations for the amplitudes. After solving these equations, we control the remainder terms and prove dispersive estimates for the main terms. The proof of this theorem is done by several steps. 
\paragraph{Step 1: Construction of the phase and amplitudes} 
Due to the support of $a$, we can replace $(h\Lambda_g)^\sigma$ by $\psi(h^2P)$ where $\psi(\lambda)=\tilde{\varphi}(\lambda)\sqrt{\lambda}^\sigma$ with $\tilde{\varphi} \in C^\infty_0(\R \backslash \{0\})$ and $\tilde{\varphi}=1$ on $J$. The interest of this replacement is that we can use Proposition $\ref{prop parametrix f}$ to write
\begin{align}
\psi(h^2P)= \sum_{k=0}^{N-1} h^k Op_h(q_k) + h^N R_N(h), \label{parametrix psi}
\end{align}
where $q_k \in S(-\infty)$ satisfying $q_0(x,\xi)= \psi\circ p(x,\xi)$, $\text{supp}(q_k) \subset p^{-1}(\text{supp}(\psi))$ and $R_N(h)$ is bounded in $L^2(\R^d)$ uniformly in $h \in (0,1]$. 
Next, using the fact 
\[
\frac{d}{dt}\Big(e^{-ith^{-1}\psi(h^2P)} J_N(t)\Big) = ih^{-1} e^{-ith^{-1}\psi(h^2P)} (hD_t -\psi(h^2P)) J_N(t),
\]
and $J_N(0)=Op_h(a)$, the Fundamental Theorem of Calculus gives
\begin{align}
e^{ith^{-1} \psi(h^2P)} Op_h(a) u_0 = J_N(t) u_0 -ih^{-1} \int_0^t e^{i(t-s) h^{-1} \psi(h^2P)} \left( hD_s-\psi(h^2P)\right)J_N(s)u_0 ds. \nonumber 
\end{align}
We want the last term to have a small contribution. To do this, we need to consider the action of $hD_t-\psi(h^2P)$ on $J_N(t)$. We first compute the action of $hD_t$ on $J_N(t)$ and have
\[
hD_t \circ J_N(t) = \sum_{l=0}^{N} h^l J_h(S(t), b_{l}(t)),
\]
where 
\begin{align}
b_{0}(t,x,\xi) &= \partial_tS(t,x,\xi) a_{0}(t,x,\xi), \nonumber \\
b_{l}(t,x,\xi) &= \partial_t S(t,x,\xi) a_{l}(t,x,\xi) + D_t a_{l-1}(t,x,\xi), \quad l=1,...,N-1, \nonumber \\
b_{N}(t,x,\xi) &= D_t a_{N-1}(t,x,\xi). \nonumber
\end{align} 
In order to study the action of $\psi(h^2P)$ on $J_N(t)$, we need the following action of a pseudo-differential operator on a Fourier integral operator (see e.g. \cite[Th\'eor\`eme IV.19]{Robert}, \cite[Theorem 2.5]{RuzSug} or \cite[Appendix]{BoucletthesisphD}).
\begin{prop} \label{prop action PDO on FIO}
Let $b \in S(-\infty)$ and $c \in S(-\infty)$ and $S \in C^\infty(\R^{2d})$ satisfy for all $\alpha, \beta \in \N^d, |\alpha+\beta| \geq 1$, there exists $C_{\alpha \beta}>0$,
\begin{align}
|\partial^\alpha_x \partial^\beta_\xi (S(x,\xi)-x\cdot \xi)| \leq C_{\alpha \beta}, \quad \forall x, \xi \in \R^d. \label{condition on phase}
\end{align}
Then
\[
Op_h(b)\circ J_h(S,c) = \sum_{j=0}^{N-1}h^j J_h(S,(b \triangleleft c)_j)+h^NJ_h(S,r_N(h)),
\]
where $(b\triangleleft c)_j$ is an universal linear combination of 
\[
\partial^\beta_\eta b(x, \nabla_xS(x,\xi)) \partial^{\beta-\alpha}_x c(x,\xi) \partial^{\alpha_1}_x  S(x,\xi)\cdots \partial^{\alpha_k}_x S(x,\xi),
\]
with $\alpha \leq \beta, \alpha_1+\cdots \alpha_k =\alpha$ and $|\alpha_l|\geq 2$ for all $l=1,...,k$ and $|\beta|=j$. The maps $(b,c) \mapsto (b\triangleleft c)_j$ and $(b,c)\mapsto r_N(h)$ are continuous from $S(-\infty) \times S(-\infty)$ to $S(-\infty)$ and $S(-\infty)$ respectively. In particular, we have
\begin{align}
(b\triangleleft c)_0(x,\xi)&= b(x,\nabla_xS(x,\xi)) c(x,\xi), \nonumber \\
i (b\triangleleft c)_1(x,\xi) &= \nabla_\eta b(x,\nabla_xS(x,\xi)) \cdot \nabla_xc(x,\xi) + \frac{1}{2}\emph{tr}\left( \nabla^2_{\eta} b (x,\partial_xS(x,\xi))\cdot\nabla^2_{x}S(x,\xi) \right) c(x,\xi). \nonumber 
\end{align}
\end{prop}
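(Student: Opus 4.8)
The plan is to reduce the statement to a single oscillatory integral depending on the parameters $(x,\xi)$ and to analyse it by the stationary phase method. First I would write the composition explicitly: for $u_0\in\mathscr{S}(\R^d)$,
\begin{align*}
Op_h(b)\circ J_h(S,c)u_0(x) = (2\pi h)^{-d}\iint_{\R^{2d}} e^{ih^{-1}(S(x,\xi)-y\xi)}\, A_h(x,\xi)\, u_0(y)\,dy\,d\xi ,
\end{align*}
where the new amplitude is itself an oscillatory integral,
\begin{align*}
A_h(x,\xi):= (2\pi h)^{-d}\iint_{\R^{2d}} e^{ih^{-1}\Phi_{x,\xi}(z,\eta)}\, b(x,\eta)\, c(z,\xi)\,dz\,d\eta , \qquad \Phi_{x,\xi}(z,\eta):= (x-z)\cdot\eta + S(z,\xi)-S(x,\xi) .
\end{align*}
Since $J_h(S,\cdot)$ is linear in the amplitude, it suffices to prove that $A_h$ admits, uniformly in $h\in(0,1]$, an expansion $A_h = \sum_{j=0}^{N-1}h^j (b\triangleleft c)_j + h^N r_N(h)$ in $S(-\infty)$ with the claimed $(b\triangleleft c)_j$ and with $r_N(h)$ bounded in $S(-\infty)$ uniformly in $h$, and that $(b,c)\mapsto(b\triangleleft c)_j$ and $(b,c)\mapsto r_N(h)$ are continuous on $S(-\infty)\times S(-\infty)$.

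Next I would analyse $\Phi_{x,\xi}$. Its differential vanishes exactly at the single point $(z,\eta)=(x,\nabla_x S(x,\xi))$, where $\Phi_{x,\xi}=0$, and, writing $w=z-x$ and $\zeta=\eta-\nabla_x S(x,\xi)$, one has $\Phi_{x,\xi} = -w\cdot\zeta + \tfrac12(\nabla^2_x S(x,\xi)w)\cdot w + R_3(w)$, where $R_3(w)=O(|w|^3)$ is the third-order Taylor remainder of $z\mapsto S(z,\xi)$ at $z=x$; in particular the non-quadratic part of the phase depends on $w$ only. The Hessian of $\Phi_{x,\xi}$ at the critical point (in the ordering $(z,\eta)$) is
\begin{align*}
M(x,\xi) = \begin{pmatrix} \nabla^2_x S(x,\xi) & -I_d \\ -I_d & 0 \end{pmatrix}, \qquad |\det M(x,\xi)|=1, \qquad \operatorname{sgn} M(x,\xi)=0, \qquad M(x,\xi)^{-1} = \begin{pmatrix} 0 & -I_d \\ -I_d & -\nabla^2_x S(x,\xi) \end{pmatrix},
\end{align*}
so the critical point is non-degenerate uniformly in $(x,\xi)$. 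Assumption (\ref{condition on phase}), used with $|\alpha+\beta|=1$ and $=2$, bounds $\nabla_z S(z,\xi)-\xi$ and $\nabla^2_z S$ uniformly; this gives $\scal{\nabla_x S(x,\xi)}\sim\scal{\xi}$ (so that $b(x,\nabla_x S(x,\xi))$ and its $\eta$-derivatives remain rapidly decaying in $(x,\xi)$) and a uniform lower bound on $|\nabla_{(z,\eta)}\Phi_{x,\xi}|$ outside a neighbourhood of the critical point. I would then cut $A_h$ with a fixed cutoff localising $(z,\eta)$ near $(x,\nabla_x S(x,\xi))$: on the complement, repeated non-stationary-phase integrations by parts together with the rapid decay of $b$ in $\eta$ and of $c$ in $\xi$ give an $O(h^\infty)$ contribution in every $S(-\infty)$ seminorm, uniformly in $(x,\xi)$; near the critical point I would apply the quantitative stationary phase lemma with parameters (as in \cite{Robert}, \cite{RuzSug}, \cite{BoucletthesisphD}), whose remainder is estimated by finitely many seminorms of the localised amplitude $(z,\eta)\mapsto b(x,\eta)c(z,\xi)$ and hence uniformly in $(x,\xi)$ after using the decay of $b,c$.

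It then remains to identify the coefficients. Since $\Phi_{x,\xi}$ vanishes at the critical point, has $|\det M|=1$ and signature $0$, and the prefactor $(2\pi h)^{-d}$ cancels the $(2\pi h)^{d}$ from the stationary phase, one gets $A_h(x,\xi)\sim\sum_j h^j\big(L_j[\,b(x,\cdot)\,c(\cdot,\xi)\,]\big)(x,\nabla_x S(x,\xi))$, where $L_j$ is the universal differential operator built from powers of $\scal{M(x,\xi)^{-1}D,D}= -2D_z\cdot D_\eta - (\nabla^2_x S(x,\xi)D_\eta)\cdot D_\eta$ and from the Taylor coefficients $\partial^\gamma_z S$, $|\gamma|\ge3$, of $R_3$ at $z=x$. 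As $b$ does not depend on $z$, each $D_\eta$ falls on $b$ (producing $\partial^\beta_\eta b(x,\nabla_x S)$), each $D_z$ on $c$ or on a Taylor coefficient of $R_3$, and each $S$-factor ($\nabla^2_x S$ coming from $M^{-1}$, or $\partial^\gamma_x S$ with $|\gamma|\ge3$ coming from $R_3$) has order $\ge2$; the homogeneity bookkeeping then gives exactly that $(b\triangleleft c)_j$ is a universal linear combination of $\partial^\beta_\eta b(x,\nabla_x S)\,\partial^{\beta-\alpha}_x c\,\partial^{\alpha_1}_x S\cdots\partial^{\alpha_k}_x S$ with $\alpha\le\beta$, $\alpha_1+\cdots+\alpha_k=\alpha$, $|\alpha_l|\ge2$, $|\beta|=j$. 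For $j=0$, $L_0$ is evaluation, giving $(b\triangleleft c)_0 = b(x,\nabla_x S)c$; for $j=1$, the remainder $R_3$ contributes nothing, because it depends on $w$ only, so any $D_w$ reaching it lands on a term vanishing to order $\ge2$ at $w=0$, whence $L_1 = \tfrac1{2i}\scal{M^{-1}D,D}$ and, after evaluation, $i(b\triangleleft c)_1 = \nabla_\eta b(x,\nabla_x S)\cdot\nabla_x c + \tfrac12\operatorname{tr}(\nabla^2_\eta b(x,\nabla_x S)\cdot\nabla^2_x S)\,c$, as stated. Finally each such product is a finite product of $\partial^\beta_\eta b$ composed with $\xi\mapsto\nabla_x S(x,\xi)=\xi+O(1)$ (hence again in $S(-\infty)$), of $\partial^{\beta-\alpha}_x c\in S(-\infty)$, and of uniformly bounded smooth functions $\partial^{\alpha_l}_x S$, so $(b\triangleleft c)_j\in S(-\infty)$ and all the listed maps are continuous on $S(-\infty)\times S(-\infty)$; the continuity of $(b,c)\mapsto r_N(h)$, uniformly in $h$, follows from the seminorm form of the stationary phase remainder bound.

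I expect the only genuine difficulty to be the \emph{uniform} (in $h$ and in $(x,\xi)$) control of $r_N(h)$ in all $S(-\infty)$ seminorms: one must run the quantitative stationary phase lemma with the parameters present, check from (\ref{condition on phase}) that the non-degeneracy of $M(x,\xi)$ and the lower bound on $|\nabla\Phi_{x,\xi}|$ off the critical point are genuinely uniform, and handle the non-compact $(z,\eta)$-domain by trading powers of $\scal{\eta}$ and $\scal{\xi}$ against the rapid decay of $b$ and $c$; the rest is bookkeeping.
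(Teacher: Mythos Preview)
The paper does not give its own proof of this proposition; it is stated with references to \cite[Th\'eor\`eme IV.19]{Robert}, \cite[Theorem 2.5]{RuzSug} and \cite[Appendix]{BoucletthesisphD}. Your approach---writing the composition as a single Fourier integral operator with amplitude $A_h(x,\xi)$, identifying the unique non-degenerate critical point of $\Phi_{x,\xi}$, and applying the quantitative stationary phase lemma with parameters---is precisely the standard argument found in those references, and your computations of the Hessian, its inverse, and the first two terms are correct (including the observation that $R_3$, depending on $w$ only, cannot contribute at order $j=1$ since $\langle M^{-1}D,D\rangle^2$ carries at most two $D_w$'s while $R_3$ vanishes to third order).
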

Using $(\ref{parametrix psi})$, we can apply \footnote{We will see later that the phase satisfies requirements of Proposition \ref{prop action PDO on FIO}.} Proposition $\ref{prop action PDO on FIO}$ and obtain
\begin{multline}
\psi(h^2P) \circ J_N(t) = \sum_{k=0}^{N-1} h^k Op_h(q_k) \circ \sum_{j=0}^{N-1} h^j J_h(S(t),a_{j}(t)) + h^N R_N(h) J_N(t), \\
 = \sum_{k+j+l=0}^{N} h^{k+j+l} J_h(S(t), (q_k \triangleleft a_{j}(t))_l) +h^{N+1} J_h(S(t), r_{N+1}(h,t) ) + h^N R_N(h) J_N(t). \nonumber
\end{multline}
This implies that 
\[
(hD_t-\psi(h^2P))J_N(t)= \sum_{r=0}^{N} h^r J_h(S(t),c_{r}(t)) - h^N R_N(h)J_N(t)-h^{N+1} J_h(S(t),r_{N+1}(h,t)),
\]
where
\begin{align}
c_{0}(t) &= \partial_tS(t) a_{0}(t)- q_0(x,\nabla_xS(t)) a_{0}(t), \nonumber \\
c_{r}(t) &= \partial_tS(t) a_{r}(t)- q_0(x,\nabla_xS(t)) a_{r}(t)+ D_t a_{r-1}(t) -(q_0 \triangleleft a_{r-1}(t))_1 - (q_1 \triangleleft a_{r-1}(t))_0 \nonumber \\
 &  - \sum_{k+j+l=r \atop j\leq r-2}(q_k \triangleleft a_{j}(t))_l, \quad r=1,...,N-1,\nonumber \\
c_{N}(t) &= D_ta_{N-1}(t) - (q_0\triangleleft a_{N-1}(t))_1 - (q_1\triangleleft a_{N-1}(t))_0 - \sum_{k+j+l=N \atop j\leq N-2}(q_k \triangleleft a_{j}(t))_l. \nonumber
\end{align}
The system of equations $c_{r}(t)=0$ for $r=0,...,N$ leads to the following Hamilton-Jacobi equation 
\begin{align}
\partial_tS(t) - q_0(x,\nabla_xS(t)) =0, \label{hamilton-jacobi equation}
\end{align}
with $S(0)=x\cdot \xi$, and transport equations
\begin{align}
D_t a_{0}(t) - (q_0 \triangleleft a_{0}(t))_1 - (q_1 \triangleleft a_{0}(t))_0 &= 0,  \label{transport 0} \\
D_t a_{r}(t) - (q_0 \triangleleft a_{r}(t))_1 - (q_1 \triangleleft a_{r}(t))_0 &= \sum_{k+j+l=r+1 \atop j\leq r-1}(q_k \triangleleft a_{j}(t))_l,     \label{transport r}
\end{align}
for $r=1,...,N-1$ with initial data
\begin{align} 
a_{0}(0) = a, \quad a_{r}(0) = 0, \quad r=1,...,N-1. \label{initial data transports}
\end{align}
The standard Hamilton-Jacobi equation gives the following result (see e.g. \cite[Th\'eor\`eme IV.14]{Robert} or Appendix $\ref{section hamilton jacobi}$).
\begin{prop}\label{prop solve hamilton jacobi wkb}
There exist $t_0>0$ small enough and a unique solution $S \in C^\infty([-t_0,t_0]\times \R^{2d})$ to the Hamilton-Jacobi equation 
\begin{align}
\left\{
\begin{array}{ccc}
\partial_t S(t,x,\xi) - q_0(x,\nabla_x S(t,x,\xi)) &=& 0, \\
S(0,x,\xi)&=& x\cdot \xi.
\end{array}
\right.\label{hamilton jacobi wkb}
\end{align}
Moreover, for all $\alpha,\beta\in \N^d$, there exists $C_{\alpha\beta}>0$ such that for all  $t\in [-t_0,t_0]$ and all $x,\xi \in \R^d$,
\begin{align} 
| \partial^\alpha_x \partial^\beta_\xi \left(S(t,x,\xi)-x\cdot \xi\right) | &\leq C_{\alpha\beta}|t|, \quad |\alpha+\beta| \geq 1, \label{estimate phase wkb 1} \\
| \partial^\alpha_x \partial^\beta_\xi (S(t,x,\xi)-x\cdot \xi - t q_0(x,\xi) ) | &\leq C_{\alpha\beta}|t|^2. \label{estimate phase wkb 2}
\end{align}
\end{prop}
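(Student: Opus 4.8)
The plan is to solve $(\ref{hamilton jacobi wkb})$ by the classical method of characteristics, exploiting the structure of $q_0=\psi\circ p$. Since $\psi\in C^\infty_0(\R\setminus\{0\})$ and, by $(\ref{assump elliptic})$, $p(x,\xi)\sim|\xi|^2$ uniformly in $x$, the symbol $q_0$ is supported in a region $c_1\leq|\xi|\leq c_2$ with $0<c_1<c_2$, belongs to $S(-\infty)$, and hence $q_0$ together with all its derivatives is bounded on $\R^{2d}$; moreover $q_0(x,\xi)=0$ (with all $x$-derivatives) for $|\xi|\notin[c_1,c_2]$. First I would introduce the Hamiltonian flow $(X(t,y,\xi),\Xi(t,y,\xi))$ associated with the Hamiltonian $q_0$, i.e. the solution of
\begin{align*}
\dot X=\nabla_\eta q_0(X,\Xi),\qquad \dot\Xi=-\nabla_x q_0(X,\Xi),\qquad X(0)=y,\quad \Xi(0)=\xi.
\end{align*}
Because $q_0$ has bounded derivatives of all orders, this flow is globally defined and smooth in $(t,y,\xi)$; integrating the equations and applying Gronwall's lemma yields, for $|t|\leq t_0$ small,
\begin{align*}
|\partial^\alpha_y\partial^\beta_\xi(X(t,y,\xi)-y)|+|\partial^\alpha_y\partial^\beta_\xi(\Xi(t,y,\xi)-\xi)|\leq C_{\alpha\beta}|t|,
\end{align*}
uniformly in $(y,\xi)$, while $X(t,y,\xi)=y$ and $\Xi(t,y,\xi)=\xi$ whenever $|\xi|\notin[c_1,c_2]$. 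Consequently, for $|t|\leq t_0$ the map $y\mapsto X(t,y,\xi)$ is a smooth diffeomorphism of $\R^d$ (its Jacobian is $\mathrm{Id}+O(t)$), and by the inverse function theorem its inverse $x\mapsto Y(t,x,\xi)$ satisfies $|\partial^\alpha_x\partial^\beta_\xi(Y(t,x,\xi)-x)|\leq C_{\alpha\beta}|t|$ uniformly in $(x,\xi)$.

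Then I would define the phase by
\begin{align*}
S(t,x,\xi)=Y(t,x,\xi)\cdot\xi+\int_0^t\Big[q_0+\Xi\cdot\nabla_\eta q_0\Big]\big(X(s,Y(t,x,\xi),\xi),\Xi(s,Y(t,x,\xi),\xi)\big)\,ds.
\end{align*}
Smoothness of $S$ on $[-t_0,t_0]\times\R^{2d}$ follows from smoothness of the flow and of $Y$, and $S(0,x,\xi)=x\cdot\xi$ is immediate from $X(0,y,\xi)=y$, $\Xi(0,y,\xi)=\xi$, $Y(0,x,\xi)=x$. The key point — which I expect to be the main obstacle — is to check that $S$ genuinely solves $(\ref{hamilton jacobi wkb})$; the heart of it is the classical identity $\nabla_x S(t,X(t,y,\xi),\xi)=\Xi(t,y,\xi)$, which expresses that the initial Lagrangian graph $\{(x,\xi)\}=\{(x,\nabla_x(x\cdot\xi))\}$ is carried by the Hamiltonian flow to the graph of $\nabla_xS(t,\cdot,\xi)$. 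This is proved by differentiating the defining integral along the characteristics, using Hamilton's equations together with $\frac{d}{dt}S(t,X(t,y,\xi),\xi)=q_0(X,\Xi)+\Xi\cdot\nabla_\eta q_0(X,\Xi)$, and the Lagrangian (closedness) property; granting it, $\partial_tS(t,X(t,y,\xi),\xi)=\frac{d}{dt}S-\nabla_xS\cdot\dot X=q_0(X,\Xi)=q_0(X,\nabla_xS)$, so $\partial_tS-q_0(x,\nabla_xS)=0$ after composing with $Y$.

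Finally I would deduce the estimates $(\ref{estimate phase wkb 1})$ and $(\ref{estimate phase wkb 2})$. For $|\xi|\notin[c_1,c_2]$ the characteristics are trivial, so $S(t,x,\xi)=x\cdot\xi$ and both estimates hold trivially; for $|\xi|\in[c_1,c_2]$ the variable $\xi$ ranges over a fixed compact set, so it suffices to bound the relevant derivatives uniformly there. Writing $S(t,x,\xi)-x\cdot\xi=(Y(t,x,\xi)-x)\cdot\xi+\int_0^t[\cdots]\,ds$ and using the $O(t)$ bounds on the derivatives of $Y-x$, the boundedness of $\xi$ on the support, and the boundedness of $q_0,\nabla_\eta q_0$ and their derivatives, gives $(\ref{estimate phase wkb 1})$. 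For $(\ref{estimate phase wkb 2})$, the equation gives $\partial_tS(0,x,\xi)=q_0(x,\nabla_xS(0,x,\xi))=q_0(x,\xi)$, hence
\begin{align*}
S(t,x,\xi)-x\cdot\xi-t\,q_0(x,\xi)=\int_0^t\int_0^s\partial_\tau^2S(\tau,x,\xi)\,d\tau\,ds,
\end{align*}
and since $\partial_\tau^2S=\nabla_\eta q_0(x,\nabla_xS)\cdot\partial_\tau\nabla_xS$, the already-established uniform bounds on the derivatives of $\partial_\tau S$ show that this quantity and all its $x,\xi$-derivatives are $O(t^2)$. Thus everything apart from the identity $\nabla_xS=\Xi$ along the flow is a routine consequence of $q_0\in S(-\infty)$ having compact $\xi$-support and globally bounded derivatives.
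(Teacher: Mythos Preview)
Your approach is essentially the one the paper carries out in its Appendix: introduce the Hamiltonian flow, get $O(|t|)$ bounds on $\Phi_H(t,\cdot,\cdot)-\mathrm{Id}$ via Gronwall, invert $y\mapsto X(t,y,\xi)$ for small $t_0$, define $S$ by the classical action formula, and read off $(\ref{estimate phase wkb 1})$--$(\ref{estimate phase wkb 2})$ from Taylor expansion in $t$. One presentational difference is that for $(\ref{estimate phase wkb 1})$ the paper writes $S(t)-x\cdot\xi=t\int_0^1\partial_tS(\theta t)\,d\theta$ and uses conservation of energy to rewrite $\partial_tS(t,x,\xi)=q_0(Y(t,x,\xi),\xi)$, which avoids having to argue separately about the term $(Y-x)\cdot\xi$ on the compact $\xi$-support.

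There is, however, a genuine sign slip in your setup. For the equation $\partial_t S-q_0(x,\nabla_xS)=0$ the characteristic system is the Hamiltonian flow of $H=-q_0$, i.e.\ $\dot X=-\nabla_\eta q_0(X,\Xi)$, $\dot\Xi=\nabla_x q_0(X,\Xi)$; you wrote the flow of $q_0$. With your sign convention the Lagrangian identity $\nabla_xS(t,X(t,y,\xi),\xi)=\Xi(t,y,\xi)$ that you invoke is \emph{false} in general (it holds in the $x$-independent toy case $q_0=\tfrac12|\xi|^2$, but fails as soon as $\nabla_x q_0\neq0$), and your formula for $S$ does not solve $(\ref{hamilton jacobi wkb})$. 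The paper's formula \eqref{define phase standard} uses the flow of $H=-q_0$ and the Lagrangian $\Xi\cdot\nabla_\xi H-H=q_0-\Xi\cdot\nabla_\xi q_0$; once you flip these signs, your argument (including the proof of the Lagrangian identity and the derivation of the estimates) goes through verbatim.
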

Note that the phase given in Proposition $\ref{prop solve hamilton jacobi wkb}$ satisfies requirements of Proposition $\ref{prop action PDO on FIO}$. It remains to solve the transport equations $(\ref{transport 0}), (\ref{transport r})$. To do so, we rewrite these equations as
\begin{align*}
\partial_t a_{0}(t,x,\xi)- V(t,x,\xi)\cdot \nabla_x a_{0} (t,x,\xi)- f(t,x,\xi) a_{0}(t,x,\xi) &=0,   \\
\partial_t a_{r}(t,x,\xi)- V(t,x,\xi)\cdot \nabla_x a_{r} (t,x,\xi)- f(t,x,\xi) a_{r}(t,x,\xi) &=g_{r}(t,x,\xi), 
\end{align*}
for $r=1,...,N-1$ where 
\begin{align}
V(t,x,\xi) &= (\partial_\xi q_0) (x,\nabla_xS(t,x,\xi)), \nonumber \\
f(t,x,\xi) &= \frac{1}{2} \text{tr} \left[ \nabla^2_\xi q_0 (x,\nabla_x S(t,x,\xi)) \cdot \nabla^2_x S(t,x,\xi)  \right] + i q_1(x,\nabla_x S(t,x,\xi)), \nonumber \\
g_{r}(t,x,\xi) &= i \sum_{k+j+l =r+1 \atop j \leq r-1} (q_k\triangleleft a_{j}(t))_l. \nonumber
\end{align}
We now construct $a_r(t,x,\xi), r=0,..., N-1$ by the method of characteristics as follows. Let $Z(t,s,x,\xi)$ be the flow associated to $V(t,x,\xi)$, i.e.
\[
\partial_t Z(t,s,x,\xi) = -V(t,Z(t,s,x,\xi),\xi), \quad Z(s,s,x,\xi)= x.
\]
By the fact that $q_0 \in S(-\infty)$ and  $(\ref{estimate phase wkb 1})$ and using the same trick as in Lemma $\ref{lem estimate flow}$, we have
\begin{align}
|\partial^\alpha_x \partial^\beta_\xi ( Z(t,s,x,\xi)-x) | \leq C_{\alpha\beta} |t-s|, \label{estimate vector field}
\end{align}
for all $|t|,|s| \leq t_0$. Now, we can define iteratively 
\begin{align}
a_{0}(t,x,\xi) &= a(Z(0,t,x,\xi),\xi) \exp \left( \int_{0}^{t} f(s,Z(s,t,x,\xi),\xi) ds \right), \nonumber \\
a_{r}(t,x,\xi) &= \int_{0}^{t}g_{r}(s,Z(s,t,x,\xi), \xi) \exp\left( \int_\tau^t f(\tau, Z(\tau,t,x,\xi),\xi) d\tau \right) ds,  \nonumber
\end{align}
for $r=1,...,N-1$. These functions are respectively solutions to $(\ref{transport 0})$ and $(\ref{transport r})$ with initial data $(\ref{initial data transports})$ respectively. Since $\text{supp}(a) \subset p^{-1}(\text{supp}(\varphi))$, we see that for $t_0>0$ small enough, $(Z(t,s,p^{-1}(\text{supp}(\varphi))), \xi) \in p^{-1}(J)$ for all $|t|, |s| \leq t_0$. By extending $a_r(t,x,\xi)$ on $\R^{2d}$ by $a_r(t,x,\xi)=0$ for $(x,\xi) \notin p^{-1}(J)$, the functions $a_r$ are still smooth in $(x,\xi) \in \R^{2d}$. Using the fact that $a, q_k \in S(-\infty)$, $(\ref{estimate phase wkb 2})$ and $(\ref{estimate vector field})$, we have for $t_0>0$ small enough, $a_r(t, \cdot,\cdot)$ is a bounded set of $S(-\infty)$ and $\text{supp}(a_r(t,\cdot,\cdot)) \in p^{-1}(J)$ uniformly with respect to $t \in [-t_0,t_0]$.
\paragraph{Step 2: $L^2$-boundedness of remainder}
We will use the so called Kuranishi trick (see e.g. \cite{Robert}, \cite{Mizutani}). We firstly have
\[
R_N(t)= ih^{N-1} \int_{0}^{t} e^{i(t-s)h^{-1}\psi(h^2P)} \Big( R_N(h) J_N(s)+h J_h(S(s),r_{N+1}(h,s)) \Big)ds. 
\]
Using that $e^{i(t-s)h^{-1}\psi(h^2P)}$ is unitary in $L^2(\R^d)$ and Proposition $\ref{prop parametrix f}$ that $R_N(h)$ is bounded in $\Lc(L^2(\R^d))$ uniformly in $h\in (0,1]$, the estimate $(\ref{estimate remainder wkb})$ follows from the $L^2$-boundedness of $J_h(S(t), a(t))$ uniformly with respect to $h \in (0,1]$ and $t\in [-t_0,t_0]$ where $(a(t))_{t\in[-t_0,t_0]}$ is bounded in $S(-\infty)$. For $t\in [-t_0,t_0]$, we define a map on $\R^{3d}$ by
\[
\Lambda (t,x,y,\xi) := \int_0^1 \nabla_x S(t, y+s(x-y),\xi) ds.
\]
Using $(\ref{estimate phase wkb 1})$, there exists $t_0>0$ small enough so that for all $t\in [-t_0, t_0]$, 
\[
\|\nabla_x \nabla_\xi S(t,x,\xi)- I_{\R^d}\| \ll 1, \quad \forall x,\xi \in \R^d.
\]
This implies that 
\[
\|\nabla_\xi \Lambda(t,x,y,\xi)-I_{\R^d}| \leq \int_{0}^{1}\| \nabla_\xi \nabla_x S(t,y+s(x-y),\xi)-I_{\R^d}\| ds \ll 1, \quad \forall t\in [-t_0,t_0].
\]
Thus for all $t\in [-t_0,t_0]$ and all $x,y \in \R^d$, the map $\xi \mapsto \Lambda(t,x,y,\xi)$ is a diffeomorphism from $\R^d$ onto itself. If we denote $\xi \mapsto \Lambda^{-1}(t,x,y,\xi)$ the inverse map, then $\Lambda^{-1}(t,x,y,\xi)$ satisfies (see \cite{BoucletthesisphD}) that: for all $\alpha,\alpha',\beta \in \N^d$,  there exists $C_{\alpha\alpha'\beta}>0$ such that
\begin{align} \label{estimate inverse map L2 bound FIO}
|\partial^\alpha_x \partial^{\alpha'}_y\partial^\beta_\xi ( \Lambda^{-1}(t,x,y,\xi)-\xi)| \leq C_{\alpha\alpha'\beta}|t|,
\end{align}
for all $t\in [-t_0,t_0]$. Now, by change of variable $\xi \mapsto \Lambda^{-1}(t,x,y,\xi)$, the action $J_h(S(t),a(t))\circ J_h(S(t),a(t))^\star$ becomes (see \cite{Robert}) a semi-classical pseudo-differential operator with the amplitude
\[
a(t,x,\Lambda^{-1}(t,x,y,\xi)) \overline{a(t,y,\Lambda^{-1}(t,x,y,\xi)} |\det \partial_\xi \Lambda^{-1}(t,x,y,\xi)|.
\]
Using the fact that $(a(t))_{t\in [-t_0,t_0]}$ is bounded in $S(-\infty)$ and $(\ref{estimate inverse map L2 bound FIO})$, this amplitude and its derivatives are bounded. By the Calder\'on-Vaillancourt theorem, we have the result. 
\paragraph{Step 3: Dispersive estimates}
We prove the result for a general term, namely $J_h(S(t),a(t))$ with $(a(t))_{t\in[-t_0,t_0]}$ is bounded in $S(-\infty)$ satisfying $\text{supp}(a(t,\cdot,\cdot)) \in p^{-1}(J)$ for some small neighborhood $J$ of $\text{supp}(\varphi)$ not containing the origin uniformly with respect to $t \in [-t_0,t_0]$. The kernel of $J_h(S(t),a(t))$ reads
\[
K_h(t,x,y)=(2\pi h)^{-d}\int_{\R^d} e^{ih^{-1}(S(t,x,\xi)-y\xi)} a(t,x,\xi)d\xi.
\]
It suffices to show for all $t\in [-t_0,t_0]$ and all $h\in (0,1]$, $|K_h(t,x,y)| \leq  Ch^{-d}(1+|t|h^{-1})^{-d/2}$, for all $x,y \in \R^d$. We only consider the case $t\geq 0$, for $t\leq 0$ it is similar. Since the amplitude is compactly supported in $\xi$ and $a(t,x,\xi)$ is bounded uniformly in $t\in [-t_0,t_0]$ and $x,y \in \R^d$, we have $|K_h(t,x,y)| \leq C h^{-d}$. If $0 \leq t \leq h$ or $1+th^{-1} \leq 2$, then
\begin{align}
|K_h(t,x,y)|  \leq  Ch^{-d} \leq C h^{-d} (1+th^{-1})^{-d/2}. \nonumber 
\end{align}
We now can assume that $h\leq t \leq t_0$ and write the phase function as $ (S(t,x,\xi)-y\xi)/t$ with the parameter $\lambda = th^{-1} \geq 1$. By the choice of $\tilde{\varphi}$ (see Step 1 for $\tilde{\varphi}$), we see that on the support of the amplitude, i.e. on $p^{-1}(J)$, $q_0(x,\xi)=\sqrt{p(x,\xi)}^\sigma$. Thus we apply $(\ref{hamilton jacobi wkb})$ to write
\[
S(t,x,\xi)=x\cdot \xi + t\sqrt{p(x,\xi)}^\sigma +t^2\int_{0}^{1} (1-\theta) \partial^2_t S(\theta t,x,\xi)d\theta.
\] 
Next, using that $p(x,\xi)=\xi^t G(x) \xi= |\eta|^2$ with $\eta=\sqrt{G(x)}\xi$ or $\xi =\sqrt{g(x)} \eta$ where $g(x)=(g_{jk}(x))_{j,k=1}^d$ and $G(x)=(g(x))^{-1}=(g^{jk}(x))_{j,k=1}^d$, the kernel can be written as
\[
K_h(t,x,y)=(2\pi h)^{-d} \int_{\R^d} e^{i\lambda \Phi(t,x,y,\eta)} a(t,x,\sqrt{g(x)}\eta) |g(x)| d\eta,
\]
where
\[
\Phi(t,x,y,\eta)= \frac{\sqrt{g(x)} (x-y)\cdot \eta}{t} + |\eta|^\sigma +t\int_{0}^{1}(1-\theta) \partial^2_t S(\theta t,x, \sqrt{g(x)}\eta) d\theta.
\]
Recall that $|g(x)|:=\sqrt{\det g(x)}$. By $(\ref{assump elliptic})$, $\|\sqrt{G(x)}\|$ and $\|\sqrt{g(x)}\|$ are bounded from below and above uniformly in $x \in \R^d$. This implies that $\eta$ still belongs to a compact set of $\R^d$ away from zero. We denote this compact support by $\mathcal{K}$. The gradient of the phase is
\[
\nabla_\eta \Phi(t,x,y,\eta)= \frac{\sqrt{g(x)}(x-y)}{t} + \sigma\eta|\eta|^{\sigma-2} +t \Big(\int_{0}^{1} (1-\theta) (\nabla_\xi \partial^2_t S)(\theta t,x, \sqrt{g(x)}\eta) d\theta \Big) \sqrt{g(x)}.
\]
\indent Let us consider the case $|\sqrt{g(x)}(x-y)/{t}| \geq C$ for some constant $C$ large enough. Thanks to the Hamilton-Jacobi equation $(\ref{hamilton jacobi wkb})$ (see also $(\ref{second derivative of phase wkb})$, $(\ref{asumption H})$ and Lemma $\ref{lem existence small time}$) and the fact $\sigma \in (0,\infty)\backslash \{1\}$, we have for $t_0$ small enough,
\[
|\nabla_\eta \Phi| \geq |\sqrt{g(x)}(x-y)/{t}| -\sigma |\eta|^{\sigma-1}-O(t) \geq C_1.
\]
Hence we can apply the non stationary theorem, i.e. by integrating by parts with respect to $\eta$ together with the fact that for all $\beta\in \N^d$ satisfying $|\beta| \geq 2$, $|\partial^\beta_\eta \Phi(t,x,y,\eta)| \leq C_\beta$, we have for all $N\geq 1$,
\[
|K_h(t,x,y)| \leq C h^{-d}\lambda^{-N} \leq C h^{-d} (1+th^{-1})^{-d/2},
\]
provided $N$ is taken greater than $d/2$. \newline
\indent Thus we can assume that $|\sqrt{g(x)}(x-y)/t| \leq C$. In this case, we write
\[
\nabla^2_\eta \Phi(t,x,y,\eta)= \sigma |\eta|^{\sigma-2} \Big(I_{\R^d} + (\sigma-2)\frac{\eta\cdot \eta^t}{|\eta|^2}\Big)+O(t).
\]
Using that 
\[
\Big|\det \sigma |\eta|^{\sigma-2} \Big(I_{\R^d} + (\sigma-2)\frac{\eta\cdot \eta^t}{|\eta|^2}\Big) \Big| = \sigma^{d}|\sigma-1\|\eta|^{(\sigma-2)d} \geq C.
\]
Therefore, for $t_0>0$ small enough, the map $\eta\mapsto \nabla_\eta \Phi(t,x,y,\eta)$ from a neighborhood of $\mathcal{K}$ to its range is a local diffeomorphism. 
Moreover, for all $\beta\in \N^d$ satisfying $|\beta| \geq 1$, we have $|\partial^\beta_\eta \Phi(t,x,y,\eta)| \leq C_\beta$. The stationary phase theorem then implies that for all $t \in [h,t_0]$ and all $x,y \in \R^d$ satisfying $|\sqrt{g(x)}(x-y)/t| \leq C$,
\[
|K_h(t,x,y)| \leq C h^{-d} \lambda^{-d/2} \leq C h^{-d}(1+th^{-1})^{-d/2}.
\]
This completes the proof.
\defendproof
\section{Strichartz estimates on compact manifolds}
\setcounter{equation}{0}
In this section, we give the proof of Strichartz estimates on compact manifolds without boundary given in Theorem $\ref{theorem strichartz estimate compact no boundary}$. 
\subsection{Notations}
\paragraph{Coordinate charts and partition of unity} 
Let $M$ be a smooth compact Riemannian manifold without boundary. A coordinate chart $(U_\kappa,V_\kappa, \kappa)$ on $M$ comprises an homeomorphism $\kappa$ between an open subset $U_\kappa$ of $M$ and an open subset $V_\kappa$ of $\R^d$. Given $\phi \in C^\infty_0(U_\kappa)$(resp. $\chi \in C^\infty_0(V_\kappa)$), we define the pushforward of $\phi$ (resp. pullback of $\chi$) by $\kappa_*\phi:= \phi\circ \kappa^{-1}$ (resp. $\kappa^*\chi:=\chi\circ \kappa$). For a given finite cover of $M$, namely $M= \cup_{\kappa \in \Fc} U_\kappa$ with $\#\Fc< \infty$, there exist $\phi_\kappa \in C^\infty_0(U_\kappa), \kappa \in \Fc$ such that $1=\sum_{\kappa} \phi_\kappa(m)$ for all $m \in M$. 
\paragraph{Laplace-Beltrami operator}
For all coordinate chart $(U_\kappa,V_\kappa,\kappa)$, there exists a symmetric positive definite matrix $g_\kappa(x):=(g^\kappa_{jk}(x))_{j,k=1}^d$ with smooth and real valued coefficients on $V_\kappa$ such that the Laplace-Beltrami operator $P=-\Delta_g$ reads in $(U_\kappa, V_\kappa,\kappa)$ as
\[
P_\kappa:=-\kappa_*\Delta_g\kappa^* =-\sum_{j,k=1}^{d} |g_\kappa(x)|^{-1}\partial_j \Big( |g_\kappa(x)| g_\kappa^{jk}(x) \partial_k \Big),
\]
where $|g_\kappa(x)|=\sqrt{\det g_\kappa(x)}$ and $(g_\kappa^{j,k}(x))_{j,k=1}^d:=(g_\kappa(x))^{-1}$. The principal symbol of $P_\kappa$ is
\[
p_\kappa(x,\xi)= \sum_{j,k=1}^{d} g_\kappa^{jk}(x)\xi_j \xi_k.
\]
\subsection{Functional calculus}
In this subsection, we recall well-known facts on pseudo-differential calculus on manifolds (see e.g. \cite{BGT}).  
For a given $a \in S(m)$, we define the operator
\begin{align}
Op^\kappa_h(a):= \kappa^* Op_h(a) \kappa_*. \label{operator on manifold}
\end{align}
If nothing is specified about $a \in S(m)$, then the operator $Op^\kappa_h(a)$ maps $C^\infty_0(U_\kappa)$ to $C^\infty(U_\kappa)$. In the case $\text{supp}(a) \subset V_\kappa \times \R^d$, we have that $Op^\kappa_h(a)$ maps $C^\infty_0(U_\kappa)$ to $C^\infty_0(U_\kappa)$ hence to $C^\infty(M)$. 
We have the following result.
\begin{prop} \label{prop parametrix of P}
Let $\phi_\kappa \in C^\infty_0(U_\kappa)$ be an element of a partition of unity on $M$ and $\tilde{\phi}_\kappa, \tilde{\tilde{\phi}}_\kappa \in C^\infty_0(U_\kappa)$ be such that $\tilde{\phi}_\kappa=1$ near $\emph{supp}(\phi_\kappa)$ and $\tilde{\tilde{\phi}}_\kappa =1$ near $\emph{supp}(\tilde{\phi}_\kappa)$. Then for all $N \geq 1$, all $z \in [0,+\infty)$and all $h \in (0,1]$,
\[
(h^2P-z)^{-1}\phi_\kappa = \sum_{j=0}^{N-1} h^j \tilde{\phi}_\kappa Op^\kappa_h(q_{\kappa,j}(z)) \phi_\kappa + h^N R_N(z,h),
\]
where $q_{\kappa,j}(z) \in S(-2-j)$ is a linear combination of $a_k(p_\kappa-z)^{-1-k}$ for some symbol $a_k \in S(2k-j)$ independent of $z$ and 
\[
R_N(z,h)=- (h^2P-z)^{-1} \tilde{\tilde{\phi}}_\kappa Op^\kappa_h(r_{\kappa,N}(z,h)) \phi_\kappa,
\]
where $r_{\kappa,N}(z,h) \in S(-N)$ with seminorms growing polynomially in $1/\emph{dist}(z, \R^+)$ uniformly in $h\in (0,1]$ as long as $z$ belongs to a bounded set of $\C \backslash [0,+\infty)$.
\end{prop}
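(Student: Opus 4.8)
The plan is to build the parametrix for the resolvent $(h^2P-z)^{-1}$ localized by $\phi_\kappa$ by transferring the problem to $\R^d$ via the coordinate chart and then running the standard semiclassical elliptic parametrix construction, taking care that all the errors remain supported in $U_\kappa$ so that everything pushes back to operators on $M$. First I would observe that, because $\tilde{\phi}_\kappa = 1$ near $\text{supp}(\phi_\kappa)$ and $\tilde{\tilde{\phi}}_\kappa=1$ near $\text{supp}(\tilde{\phi}_\kappa)$, one has $\phi_\kappa = \tilde{\phi}_\kappa\phi_\kappa = \tilde{\tilde{\phi}}_\kappa\tilde{\phi}_\kappa\phi_\kappa$, and the operator $h^2P-z$ acting on functions supported in $U_\kappa$ agrees with $\kappa^*(h^2P_\kappa - z)\kappa_*$, where $h^2P_\kappa=(hD)^t G_\kappa(x)(hD)+\text{(lower order in }h)$ is a semiclassical differential operator on $V_\kappa\subset\R^d$ with principal symbol $p_\kappa(x,\xi)-z$ that is elliptic on $\text{supp}(\tilde{\tilde{\phi}}_\kappa)$ for $z\in[0,\infty)$ thanks to the ellipticity assumption $(\ref{assump elliptic})$ (here $p_\kappa(x,\xi)\sim|\xi|^2\geq 0$, so $p_\kappa - z$ never vanishes for $z$ in the prescribed set — and one tracks the dependence on $1/\text{dist}(z,\R^+)$ when $z$ is allowed into $\C\setminus[0,\infty)$).

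The core step is the iterative construction of symbols. I would set $q_{\kappa,0}(z)=(p_\kappa-z)^{-1}$, cut off by a function of $x$ supported in $\text{supp}(\tilde{\tilde{\phi}}_\kappa)$, and then define $q_{\kappa,j}(z)$ recursively so that the semiclassical symbolic composition $(h^2P_\kappa-z)\#\big(\sum_{j=0}^{N-1}h^j q_{\kappa,j}(z)\big)$ equals the identity up to an error of order $h^N$. Each step of the composition formula produces, when hitting $q_{\kappa,j}$, terms of the shape $a_k \cdot \partial^\alpha q_{\kappa,j}$ with $a_k$ coming from derivatives of the coefficients of $P_\kappa$; since $\partial^\alpha(p_\kappa-z)^{-1}$ is a linear combination of $(\text{smooth in }x)\cdot(p_\kappa-z)^{-1-k}$ with the smooth factors in $S(2k)$ modulo the monomial in $\xi$, an induction shows $q_{\kappa,j}(z)$ is exactly a finite linear combination of $a_k(p_\kappa-z)^{-1-k}$ with $a_k\in S(2k-j)$ independent of $z$, and $q_{\kappa,j}(z)\in S(-2-j)$ — the extra two orders of decay coming from the base resolvent and the fact that every application of the composition either differentiates in $\xi$ (gaining a power) or multiplies by a resolvent power. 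One then inserts the cutoffs: $\tilde{\phi}_\kappa Op^\kappa_h(q_{\kappa,j}(z))\phi_\kappa$ is a well-defined operator on $M$, and the remainder, after regrouping, has the claimed form $R_N(z,h)=-(h^2P-z)^{-1}\tilde{\tilde{\phi}}_\kappa Op^\kappa_h(r_{\kappa,N}(z,h))\phi_\kappa$ with $r_{\kappa,N}(z,h)\in S(-N)$, by writing the identity $(h^2P-z)\big(\sum h^j\tilde{\phi}_\kappa Op^\kappa_h(q_{\kappa,j})\phi_\kappa\big)=\phi_\kappa + h^N\,(\text{error})$ and solving for the resolvent. The fact that $\tilde{\phi}_\kappa=1$ near $\text{supp}(\phi_\kappa)$ and $\tilde{\tilde{\phi}}_\kappa=1$ near $\text{supp}(\tilde{\phi}_\kappa)$ is exactly what kills the commutator terms $[P,\tilde{\phi}_\kappa]$ etc. up to $O(h^\infty)$ with disjoint-support symbols, so those contributions can be absorbed into $r_{\kappa,N}$.

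The main obstacle, and the step needing the most care, is controlling the $z$-dependence: I need the seminorms of $q_{\kappa,j}(z)$ and $r_{\kappa,N}(z,h)$ to grow only polynomially in $1/\text{dist}(z,\R^+)$, uniformly in $h\in(0,1]$, when $z$ ranges over a bounded subset of $\C\setminus[0,\infty)$. This follows because each factor $(p_\kappa(x,\xi)-z)^{-1}$ obeys $|(p_\kappa-z)^{-1}|\leq C/\text{dist}(z,\R^+)$ and its $(x,\xi)$-derivatives bring down additional powers of $(p_\kappa-z)^{-1}$ times symbols in the appropriate class — so a term built from $m$ resolvent factors is $O(\text{dist}(z,\R^+)^{-m})$ — together with the observation that for $z\in[0,\infty)$ itself the ellipticity $(\ref{assump elliptic})$ gives $p_\kappa-z\gtrsim\langle\xi\rangle^2$ on the support, making the construction genuinely uniform there. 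Apart from this bookkeeping, the remaining work (the semiclassical symbolic calculus expansion, the Beals-type or oscillatory-integral remainder estimates, and checking the support/gluing conditions for operators on $M$) is standard and parallels \cite{BGT} and \cite{BTlocalstrichartz}, so I would only sketch it.
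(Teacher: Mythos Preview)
Your proposal is correct and follows the same route as the paper: pull back to the chart, run the semiclassical elliptic parametrix recursion starting from $q_{\kappa,0}=(p_\kappa-z)^{-1}$, and use the nested cutoffs together with the disjoint-support commutator $[h^2P,\tilde\phi_\kappa]\cdot(\cdots)\phi_\kappa$ to absorb the boundary terms into $r_{\kappa,N}$. The one device the paper makes explicit and you leave implicit is a \emph{global extension} of $P_\kappa$ to an elliptic operator on all of $\R^d$: rather than cutting the symbols $q_{\kappa,j}$ in $x$ and carrying cutoff derivatives through the recursion, the paper sets $P=\upsilon P_\kappa-(1-\upsilon)\Delta$ with $\upsilon\in C^\infty_0(V_\kappa)$ equal to $1$ near $\text{supp}(\kappa_*\tilde{\tilde\phi}_\kappa)$, so that $P$ satisfies $(\ref{assump elliptic})$--$(\ref{assump bounded metric})$ on all of $\R^{2d}$ and the standard $\R^d$ parametrix $(h^2P-z)Op_h(q)=I+h^NOp_h(\tilde r_N)$ applies with no support issues; the cutoffs $\tilde\chi_\kappa,\chi_\kappa$ are inserted only afterwards via the identity $(\ref{interchange trick})$. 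Your cutoff-first variant can be made to work, but the extension trick is what keeps the bookkeeping clean. (One small slip: $p_\kappa-z$ certainly \emph{does} vanish for real $z\geq 0$; the hypothesis ``$z\in[0,+\infty)$'' in the statement is a typo for $z\notin[0,+\infty)$, as the remainder bound in $1/\text{dist}(z,\R^+)$ already indicates.)
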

\begin{proof}
Let us set $\chi_\kappa:= \kappa_*\phi_\kappa$, similarly for $\tilde{\chi}_\kappa$ and $\tilde{\tilde{\chi}}_\kappa$ and get $\chi_\kappa, \tilde{\chi}_\kappa, \tilde{\tilde{\chi}}_\kappa \in C^\infty_0(V_\kappa)$ and $\tilde{\chi}_\kappa =1$ near $\text{supp}(\chi_\kappa)$ and $\tilde{\tilde{\chi}}_\kappa =1$ near $\text{supp}(\tilde{\chi}_\kappa)$. We firstly find an operator, still denoted by $P$, globally defined on $\R^d$ of the form
\begin{align}
P= -\sum_{j,k=1}^d g^{jk}(x) \partial_j \partial_k + \sum_{l=1}^d b_l(x) \partial_l, \label{global extending operator}
\end{align}
which coincides with $P_\kappa$ on a large relatively compact subset $V_0$ of $V_\kappa$. By ``large'', we mean that $\text{supp}(\tilde{\tilde{\chi}}_\kappa) \subset V_0$. For instance, we can take $P=\upsilon P_\kappa - (1-\upsilon)\Delta$ where $\upsilon \in C^\infty_0(V_\kappa)$ with values in $[0,1]$ satisfying $\upsilon=1$ on $V_0$. The principal symbol of $P$ is
\begin{align}
p(x,\xi)=\sum_{j,k=1}^{d}g^{jk}(x)\xi_j\xi_k,\quad \text{where } g^{jk}(x)= \upsilon(x) g^{jk}_\kappa(x) + (1-\upsilon(x))\delta_{jk}. \label{pricipal symbol of global extension P}
\end{align}
It is easy to see that $g(x)=(g^{jk}(x))$ satisfies $(\ref{assump elliptic})$ and $(\ref{assump bounded metric})$ and $b_l$ is bounded in $\R^d$ together with all of its derivatives. Using the standard elliptic parametrix for $(h^2P-z)^{-1}$ (see e.g \cite{Robert}), we have
\begin{align}
(h^2P-z) Op_h(q_{\kappa}(z,h)) = I + h^N Op_h(\tilde{r}_{\kappa,N}(z,h)), \label{localize parametrix}
\end{align}
where $q_\kappa(z,h)=\sum_{j=0}^{N-1}h^jq_{\kappa,j}(z)$ with $q_{\kappa,j}(z) \in S(-2-j)$ and $\tilde{r}_{\kappa,N}(z,h)\in S(-N)$ with seminorms growing polynomially in $\scal{z}/\text{dist}(z, \R^+)$ uniformly in $h\in (0,1]$. On the other hand, we can write
\begin{align}
(h^2P_\kappa-z) \tilde{\chi}_\kappa & Op_h(q_\kappa(z,h)) \chi_\kappa \nonumber \\
& = \tilde{\chi}_\kappa (h^2P_\kappa-z) Op_h(q_\kappa(z,h)) \chi_\kappa + [h^2P_\kappa, \tilde{\chi}_\kappa] Op_h(q_{\kappa}(z,h)) \chi_\kappa. \label{interchange trick}
\end{align}
Here $[h^2P_\kappa, \tilde{\chi}_\kappa]$ and $\chi_\kappa$ have coefficients with disjoint supports. Thanks to $(\ref{localize parametrix})$ and the composition of pseudo-differential operators with disjoint supports, we have 
\[
(h^2P_\kappa-z) \tilde{\chi}_\kappa Op_h(q_{\kappa}(z,h)) \chi_\kappa = \chi_\kappa +h^N \tilde{\tilde{\chi}}_\kappa Op_h(r_{\kappa,N}(z,h)) \chi_\kappa,
\]
with $r_{\kappa,N}(z,h)$ satisfying the required property. We then compose to the right and the left of above equality with $\kappa^*$ and $\kappa_*$ respectively and get
\[
(h^2P-z) \tilde{\phi}_\kappa Op_h^\kappa(q_{\kappa}(z,h)) \phi_\kappa = \phi_\kappa +h^N \tilde{\tilde{\phi}}_\kappa Op_h^\kappa(r_{\kappa,N}(z,h)) \phi_\kappa.
\]
This gives the result and the proof is complete.
\end{proof}
Next, we give an application of the parametrix given in Proposition $\ref{prop parametrix of P}$ and have the following result (see \cite[Proposition 2.1]{BGT} or \cite[Proposition 2.5]{BTlocalstrichartz}). 
\begin{prop} \label{prop parametrix f compact manifolds}
Let $\phi_\kappa, \tilde{\phi}_\kappa, \tilde{\tilde{\phi}}_\kappa$ be as in $\emph{Proposition } \ref{prop parametrix of P}$ and $f \in C^\infty_0(\R)$. Then for all $N \geq 1$ and all $h \in (0,1]$,
\begin{align}
f(h^2P)\phi_\kappa= \sum_{j=0}^{N-1} h^j \tilde{\phi}_\kappa Op^\kappa_h(a_{\kappa,j}) \phi_\kappa + h^N R_{\kappa,N}(h), \label{parametrix f compact manifolds}
\end{align}
where $a_{\kappa,j} \in S(-\infty)$ with $\emph{supp}(a_{\kappa,j}) \subset \emph{supp}(f \circ p_\kappa)$ for $j=0,...,N-1$. Moreover, for all $m \geq 0$, there exists $C>0$ such that for all $h\in (0,1]$,
\begin{align}
\|R_N(h)\|_{\Lc(H^{-m}(M), H^m(M))} \leq C h^{-2m}. \label{remainder parametrix f compact manifolds}
\end{align}
\end{prop}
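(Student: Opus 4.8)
The plan is to deduce Proposition \ref{prop parametrix f compact manifolds} from the local parametrix for the resolvent $(h^2P-z)^{-1}$ obtained in Proposition \ref{prop parametrix of P}, together with an almost-analytic extension / Helffer--Sj\"ostrand representation of $f(h^2P)$. Since $f \in C^\infty_0(\R)$, choose an almost-analytic extension $\tilde{f} \in C^\infty_0(\C)$ with $\tilde{f}|_\R = f$ and $\bar{\partial}\tilde{f}(z) = O(|\im z|^\infty)$, so that
\begin{align}
f(h^2P) = -\frac{1}{\pi} \int_\C \bar\partial \tilde{f}(z)\, (h^2P-z)^{-1}\, dL(z), \nonumber
\end{align}
where $dL$ is Lebesgue measure on $\C$. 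Composing on the right with $\phi_\kappa$ and inserting the parametrix of Proposition \ref{prop parametrix of P} gives, for each $N$,
\begin{align}
f(h^2P)\phi_\kappa = \sum_{j=0}^{N-1} h^j \tilde\phi_\kappa\, Op^\kappa_h\!\Big( -\tfrac{1}{\pi}\!\int_\C \bar\partial\tilde f(z) q_{\kappa,j}(z)\, dL(z) \Big)\phi_\kappa + h^N R_{\kappa,N}(h), \nonumber
\end{align}
where $R_{\kappa,N}(h) = -\tfrac{1}{\pi}\int_\C \bar\partial\tilde f(z) R_N(z,h)\, dL(z)$. One then sets $a_{\kappa,j} := -\tfrac{1}{\pi}\int_\C \bar\partial\tilde f(z) q_{\kappa,j}(z)\, dL(z)$.

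First I would verify that $a_{\kappa,j} \in S(-\infty)$ with the stated support property. Since $q_{\kappa,j}(z)$ is a linear combination of $a_k (p_\kappa - z)^{-1-k}$ with $a_k \in S(2k-j)$ independent of $z$, each term's symbol seminorms are bounded by $C\, \langle \xi\rangle^{2k-j}\, |p_\kappa(x,\xi)-z|^{-1-k-\ell}$ for some $\ell$ coming from $\xi$-derivatives hitting $(p_\kappa-z)^{-1-k}$; using the ellipticity $\langle\xi\rangle^2 \lesssim |p_\kappa(x,\xi)| + 1$ and the decay $\bar\partial\tilde f(z) = O(|\im z|^M)$ for $M$ large, the standard Helffer--Sj\"ostrand estimate shows the $z$-integral converges and produces a symbol in $S(-\infty)$ (gaining as much decay in $\langle\xi\rangle$ as we wish by taking $M$ large). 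The support statement follows because $\bar\partial\tilde f$ is supported in a small complex neighborhood of $\mathrm{supp}(f) \subset \R$, so $a_{\kappa,j}$ vanishes where $p_\kappa(x,\xi)$ stays away from $\mathrm{supp}(f)$; more precisely $\mathrm{supp}(a_{\kappa,j}) \subset \mathrm{supp}(f\circ p_\kappa)$ up to shrinking the extension. That $q_{\kappa,0}(z) = (p_\kappa - z)^{-1}$ reproduces $a_{\kappa,0} = f\circ p_\kappa$ by the Cauchy formula is a useful consistency check, though not needed for the statement as written.

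For the remainder bound \eqref{remainder parametrix f compact manifolds}, I would use that $R_N(z,h) = -(h^2P - z)^{-1} \tilde{\tilde\phi}_\kappa Op^\kappa_h(r_{\kappa,N}(z,h))\phi_\kappa$ with $r_{\kappa,N}(z,h) \in S(-N)$ whose seminorms grow only polynomially in $1/\mathrm{dist}(z,\R^+)$. The operator $Op^\kappa_h(r_{\kappa,N}(z,h))$ maps $H^{-m}$ to $H^{N-m}$ (uniformly in $h$, after accounting for the $h$-scaling: $\|Op^\kappa_h(b)\|_{\Lc(H^{s},H^{s+N})} \lesssim h^{-N}$ for $b\in S(-N)$), while the resolvent $(h^2P-z)^{-1}$ is bounded on any $H^s$ with norm $\lesssim \langle z\rangle^{|s|} h^{-2|s|} / \mathrm{dist}(z,\R^+)$ — this is where the factor $h^{-2m}$ enters. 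Combining these with the rapid decay of $\bar\partial\tilde f$ in $|\im z|$ to absorb the negative powers of $\mathrm{dist}(z,\R^+)$, the $z$-integral converges and yields $\|R_{\kappa,N}(h)\|_{\Lc(H^{-m}(M),H^m(M))} \lesssim h^{-2m}$ (the extra $h^{-N}$ gain from $r_{\kappa,N}\in S(-N)$ can be discarded, or kept to improve the estimate for large $N$). Strictly, one should pass from the local objects on $V_\kappa$ to global operators on $M$ via $\kappa^*,\kappa_*$ and the fact that $\tilde\phi_\kappa, \tilde{\tilde\phi}_\kappa$ are compactly supported in $U_\kappa$, so everything is a well-defined operator on $C^\infty(M)$; this is routine given the setup in Section 3.1.

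\textbf{The main obstacle} I anticipate is bookkeeping rather than conceptual: one must track how $\xi$-derivatives falling on the resolvent symbols $(p_\kappa-z)^{-1-k}$ increase the power of $(p_\kappa - z)$ in the denominator (hence the power of $1/\mathrm{dist}(z,\R^+)$), and then check that the almost-analytic extension can be chosen to vanish to high enough order on $\R$ that the Helffer--Sj\"ostrand integral still converges after all these losses — uniformly in $h$ and with the correct $h$-power. This is standard (it is exactly the mechanism behind Proposition \ref{prop parametrix f} on $\R^d$), so citing \cite{BGT}, \cite{BTlocalstrichartz} for the analogous computation and only indicating the modifications needed to localize via $\phi_\kappa$ on the chart would be the efficient route.
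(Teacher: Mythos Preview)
Your approach is essentially the same as the paper's: Helffer--Sj\"ostrand representation of $f(h^2P)$ combined with the resolvent parametrix of Proposition~\ref{prop parametrix of P}, then Cauchy's formula to identify the symbols $a_{\kappa,j}$ (which, as you note, are linear combinations of $a_k\cdot f^{(k)}\!\circ p_\kappa$, giving the exact support statement without any ``shrinking'').

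One correction on the remainder estimate: the factor $h^{-2m}$ does \emph{not} come from the resolvent. Since $H^s(M)$ is defined via the functional calculus of $P$, the operator $(h^2P-z)^{-1}$ commutes with $(1+P)^{s/2}$ and is therefore bounded on $H^s$ with norm $\leq 1/|\!\im z|$, uniformly in $h$ --- there is no $h^{-2|s|}$ loss there. The $h^{-2m}$ arises instead from the pseudo-differential part: the paper sandwiches $Op_h(r_{\kappa,N}(z,h))$ between two copies of $(P+1)^{m/2}$ and observes that
\[
(P+1)^{m/2}\, Op_h(r_{\kappa,N}(z,h))\,\chi_\kappa\, (P+1)^{m/2} \;=\; h^{-2m}\, Op_h(\tilde r_{\kappa,N}(z,h)),
\]
with $\tilde r_{\kappa,N}(z,h)\in S(-N+2m)$ (seminorms polynomial in $1/\mathrm{dist}(z,\R^+)$), then takes $N$ large enough that this is $L^2$-bounded. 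Equivalently, in your language, the loss is in $\|Op^\kappa_h(r_{\kappa,N})\|_{\Lc(H^{-m},H^{m})}\lesssim h^{-2m}$ for $N\geq 2m$; the resolvent then acts on $H^m$ with the harmless bound $1/|\!\im z|$, absorbed by $\bar\partial\tilde f(z)=O(|\!\im z|^\infty)$. With this adjustment your argument goes through and matches the paper's.
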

\begin{proof}
The proof is essentially given in \cite[Proposition 2.1]{BGT}. For the reader's convenience, we recall the main steps. By using Proposition $\ref{prop parametrix of P}$ and the Helffer-Sj\"ostrand formula (see \cite{DimaSjos}), namely
\[
f(h^2P) = -\frac{1}{\pi} \int_{\C} \overline{\partial} \widetilde{f}(z) (h^2P-z)^{-1} dL(z),
\]
where $\widetilde{f}$ is an almost analytic extension of $f$, the Cauchy formula implies $(\ref{parametrix f compact manifolds})$ with 
\[
R_{\kappa,N}(h) = \frac{1}{\pi} \int_{\C} \overline{\partial} \widetilde{f}(z)(h^2P-z)^{-1} \tilde{\tilde{\phi}}_\kappa Op^\kappa_h(r_{\kappa,N}(z,h)) \phi_\kappa  dL(z).
\]
It remains to prove $(\ref{remainder parametrix f compact manifolds})$. This leads to study the action on $L^2(\R^d)$ of the map
\[
\int_{\C} \overline{\partial}\widetilde{f}(z) (P_\kappa+1)^{m/2} (h^2P_\kappa-z)^{-1} \tilde{\tilde{\chi}}_\kappa Op_h(r_{\kappa,N}(z,h)) \chi_\kappa  (P_\kappa+1)^{m/2} dL(z).
\]
Using a trick as in $(\ref{interchange trick})$, we can find a globally defined operator $P$ which coincides with $P_\kappa$ on the support of $\tilde{\tilde{\chi}}_\kappa$. We see that $\|(h^2P-z)^{-1}\|_{\Lc(L^2(\R^d))} \leq C |\im{z}|^{-1}$ and 
\[
(P+1)^{m/2} Op_h(r_{\kappa,N}(z,h)) \chi_\kappa (P+1)^{m/2} = h^{-2m} Op_h(\tilde{r}_{\kappa,N}(z,h)),
\]
where $\tilde{r}_{\kappa,N}(z,h) \in S(-N+2m)$ with seminorms growing polynomially in $1/\text{dist}(z,\R^+)$ uniformly in $h \in (0,1]$ which are harmless since $\widetilde{f}$ is compactly supported and $\overline{\partial}\widetilde{f}(z)= O(|\im{z}|^{\infty})$. By choosing $N$ such that $N-2m>d$, the result then follows from the $\Lc(L^2(\R^d))$ bound of pseudo-differential operator given in Proposition $\ref{prop lq lr bounds}$.
\end{proof}
A direct consequence of Proposition $\ref{prop parametrix f}$ using partition of unity and Proposition $\ref{prop lq lr bounds}$ is the following result. (see \cite[Corollary 2.2]{BGT} or \cite[Proposition 2.9]{BTlocalstrichartz}). 
\begin{coro}\label{coro lq lr bound pseudo}
Let $f \in C^\infty_0(\R)$. Then for all $1\leq q\leq r \leq \infty$, there exists $C>0$ such that for all $h \in (0,1]$,
\[
\|f(h^2P)\|_{\Lc(L^q(M),L^r(M))} \leq C h^{-\left(\frac{d}{q}-\frac{d}{r}\right)}.
\]
\end{coro}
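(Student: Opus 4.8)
The plan is to reduce the claim on $M$ to the corresponding statement on $\R^d$, namely Proposition \ref{prop lq lr bounds for f(h^2P)} (or directly Proposition \ref{prop lq lr bounds}), by localizing with a partition of unity and transporting everything through the coordinate charts. First I would fix a finite cover $M = \cup_{\kappa \in \Fc} U_\kappa$ together with a subordinate partition of unity $1 = \sum_{\kappa} \phi_\kappa$, $\phi_\kappa \in C^\infty_0(U_\kappa)$, and choose auxiliary cutoffs $\tilde\phi_\kappa, \tilde{\tilde\phi}_\kappa \in C^\infty_0(U_\kappa)$ as in Proposition \ref{prop parametrix of P}. Writing $f(h^2 P) = \sum_\kappa f(h^2P)\phi_\kappa$, it suffices to bound each $\|f(h^2P)\phi_\kappa\|_{\Lc(L^q(M),L^r(M))}$ by $Ch^{-(d/q-d/r)}$, since $\#\Fc < \infty$. (One should also note that on the compact manifold $L^q(M) \hookrightarrow L^r(M)$ relations and the equivalence of the intrinsic $L^q(M, d\mathrm{vol}_g)$ norm with the sum of the local Euclidean norms $\|\kappa_*(\psi_\kappa \cdot)\|_{L^q(\R^d)}$ over the charts are standard; this is where compactness enters, and it lets us pass freely between $M$ and $\R^d$.)

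Next I would invoke the parametrix of Proposition \ref{prop parametrix f compact manifolds}: for any $N \geq 1$,
\[
f(h^2P)\phi_\kappa = \sum_{j=0}^{N-1} h^j \tilde\phi_\kappa \, Op^\kappa_h(a_{\kappa,j})\, \phi_\kappa + h^N R_{\kappa,N}(h),
\]
with $a_{\kappa,j} \in S(-\infty)$. For each term in the sum, unfolding the definition $Op^\kappa_h(a_{\kappa,j}) = \kappa^* Op_h(a_{\kappa,j}) \kappa_*$ and using that $\kappa_*, \kappa^*$, and multiplication by $\phi_\kappa, \tilde\phi_\kappa$ are bounded between the relevant $L^q$ spaces (uniformly in $h$), the bound $\|Op_h(a_{\kappa,j})\|_{\Lc(L^q(\R^d), L^r(\R^d))} \leq C h^{-(d/q-d/r)}$ from Proposition \ref{prop lq lr bounds} — applicable since $a_{\kappa,j} \in S(-\infty) \subset S(-m)$ for any $m > d$ — yields a contribution of size $O(h^{j} \cdot h^{-(d/q-d/r)})$, hence $O(h^{-(d/q-d/r)})$ since $j \geq 0$ and $h \in (0,1]$. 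For the remainder I would take $m > d/2$ and $N$ large enough that $N - 2m > 0$; combining the $\Lc(H^{-m}(M), H^m(M))$ bound $\|R_{\kappa,N}(h)\|\leq Ch^{-2m}$ of $(\ref{remainder parametrix f compact manifolds})$ with the Sobolev embeddings $L^{q'}(M) \hookrightarrow H^{-m}(M)$ and $H^m(M) \hookrightarrow L^\infty(M) \hookrightarrow L^r(M)$ (valid on the compact manifold for $m > d/2$, any $q' \geq 1$), the term $h^N R_{\kappa,N}(h)$ is $O(h^{N-2m}) = O(1)$, which is dominated by $h^{-(d/q-d/r)}$. Summing over the finitely many $\kappa$ gives the claim.

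The main obstacle — or rather the only point requiring care — is the bookkeeping at the interfaces between $M$ and $\R^d$: one must make sure that all the cutoff multiplications and the pushforward/pullback operators act boundedly and uniformly in $h$ on the $L^q$-scale (not just $L^2$), and that the supports are arranged so that $Op^\kappa_h(a_{\kappa,j})$ genuinely maps into functions supported in $U_\kappa$ (which is guaranteed here since $\mathrm{supp}(a_{\kappa,j}) \subset V_\kappa \times \R^d$, via the remark after $(\ref{operator on manifold})$). Everything else is a direct transcription of the Euclidean estimates, exactly as in \cite[Corollary 2.2]{BGT}.
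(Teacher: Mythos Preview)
Your proposal is correct and follows exactly the route indicated by the paper: localize via a partition of unity, apply the parametrix of Proposition~\ref{prop parametrix f compact manifolds} in each chart, bound the main terms through Proposition~\ref{prop lq lr bounds}, and control the remainder by Sobolev embedding. The paper does not write out a proof beyond citing these ingredients (and \cite[Corollary 2.2]{BGT}), so your argument is precisely the intended one, with the bookkeeping made explicit.
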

The next proposition gives the Littlewood-Paley decomposition on compact manifolds without boundary (see \cite[Corollary 2.3]{BGT}) which is similar to Proposition $\ref{prop littlewood paley decomposition}$.
\begin{prop} \label{prop littlewood paley decomposition compact no boundary}
There exist $\varphi_0 \in C^\infty_0(\R)$ and $\varphi \in C^\infty_0(\R \backslash \{0\})$ 
such that for all $q \in [2,\infty)$, there exists $C>0$,
\[
\|u\|_{L^q(M)} \leq C \Big(\sum_{h^{-1}:\emph{dya}} \|\varphi(h^2P)u\|^2_{L^q(M)} \Big)^{1/2} + C\|u\|_{L^2(M)},
\]
for all $u \in C^\infty_0(M)$.
\end{prop}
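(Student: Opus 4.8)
The plan is to take the Littlewood--Paley functions exactly as in Proposition \ref{prop littlewood paley decomposition}, reduce the claimed inequality to a square function bound in $L^{q'}(M)$ with $q'=q/(q-1)\in(1,2]$ by duality, and prove that bound by randomisation. First I would fix $\chi\in C^\infty_0(\R)$ with $0\le\chi\le1$, $\chi\equiv1$ on $[-1,1]$ and $\mathrm{supp}\,\chi\subset[-4,4]$, and set $\varphi_0:=\chi$, $\varphi(\lambda):=\chi(\lambda/4)-\chi(\lambda)$; then $\varphi\in C^\infty_0(\R\setminus\{0\})$ and the partial sums of $\varphi_0(\lambda)+\sum_{h^{-1}:\emph{dya}}\varphi(h^2\lambda)$ telescope to $\chi(2^{-2K}\lambda)\to1$, so $\varphi_0(P)+\sum_{h^{-1}:\emph{dya}}\varphi(h^2P)=\mathrm{Id}$ on $L^2(M)$. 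For $u\in C^\infty_0(M)$ the decomposition $u=\varphi_0(P)u+\sum_{h^{-1}:\emph{dya}}\varphi(h^2P)u$ converges in every $L^r(M)$, since for smooth $u$ the tail decays rapidly in $h$. The contribution of $\varphi_0(P)u$ is harmless: because $\varphi_0$ has compact support, $\langle\Lambda_g\rangle^s\varphi_0(P)=\psi(P)$ with $\psi\in C^\infty_0(\R)$, so $\varphi_0(P)$ maps $L^2(M)$ into $H^s(M)$ for every $s$, and Sobolev embedding with $s>d(1/2-1/q)$ gives $\|\varphi_0(P)u\|_{L^q(M)}\lesssim\|u\|_{L^2(M)}$; this is also the $h=1$ instance of Corollary \ref{coro lq lr bound pseudo}. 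Hence it remains to bound $\big\|\sum_{h^{-1}:\emph{dya}}\varphi(h^2P)u\big\|_{L^q(M)}$.

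For this I would introduce fattened cut-offs $\tilde\varphi\in C^\infty_0(\R\setminus\{0\})$ and $\tilde\varphi_0\in C^\infty_0(\R)$ with $\tilde\varphi\equiv1$ near $\mathrm{supp}\,\varphi$ and $\tilde\varphi_0\equiv1$ near $\mathrm{supp}\,\varphi_0$, so that $\varphi(h^2P)=\tilde\varphi(h^2P)\varphi(h^2P)$. By duality, $\big\|\sum_h\varphi(h^2P)u\big\|_{L^q(M)}$ equals the supremum over $v\in C^\infty(M)$ with $\|v\|_{L^{q'}(M)}\le1$ of $\big|\sum_h\langle\varphi(h^2P)u,v\rangle\big|$, and using that $\tilde\varphi(h^2P)$ is self-adjoint together with Hölder and Cauchy--Schwarz in $h$,
\[
\Big|\sum_h\langle\varphi(h^2P)u,\tilde\varphi(h^2P)v\rangle\Big|\le\Big(\sum_h\|\varphi(h^2P)u\|_{L^q(M)}^2\Big)^{1/2}\Big(\sum_h\|\tilde\varphi(h^2P)v\|_{L^{q'}(M)}^2\Big)^{1/2}.
\]
Since $q'\le2$, Minkowski's inequality gives $\big(\sum_h\|\tilde\varphi(h^2P)v\|_{L^{q'}(M)}^2\big)^{1/2}\le\big\|\big(\sum_h|\tilde\varphi(h^2P)v|^2\big)^{1/2}\big\|_{L^{q'}(M)}$, so everything is reduced to the square function estimate $\big\|\big(\sum_h|\tilde\varphi(h^2P)v|^2\big)^{1/2}\big\|_{L^{q'}(M)}\lesssim\|v\|_{L^{q'}(M)}$ for $1<q'\le2$.

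To prove this I would randomise: by Khintchine's inequality and Fubini, the left-hand side to the power $q'$ is comparable to $\mathbb{E}_\omega\|m_\omega(P)v\|_{L^{q'}(M)}^{q'}$, where $m_\omega(\lambda)=\sum_h\epsilon_h(\omega)\tilde\varphi(h^2\lambda)$ and $(\epsilon_h)$ is a Rademacher sequence. Because at each $\lambda$ only a bounded number of the $\tilde\varphi(h^2\lambda)$ are nonzero, $m_\omega\in C^\infty(\R)$ with all Mikhlin seminorms $\sup_\lambda|\lambda^j m_\omega^{(j)}(\lambda)|$ bounded uniformly in $\omega$. The classical spectral multiplier theorem for $P=-\Delta_g$ on the compact manifold $M$ — provable from the finite propagation speed of $\cos(t\sqrt P)$ together with a Calder\'on--Zygmund argument, or from Gaussian bounds on the heat kernel of $P$, and requiring only $>d/2$ derivatives of the multiplier — then yields $\|m_\omega(P)\|_{\Lc(L^{q'}(M))}\le C$ uniformly in $\omega$. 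Combining this with the elementary $L^2$ almost-orthogonality $\sum_h\|\tilde\varphi(h^2P)v\|_{L^2(M)}^2\lesssim\|v\|_{L^2(M)}^2$ (finite overlap of the supports of $\tilde\varphi(h^2\cdot)$) gives the square function bound, and tracing back the reductions completes the proof.

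The one genuinely nontrivial input is this spectral multiplier theorem; everything else is formal manipulation or already available in the paper. It cannot be obtained by interpolating the cheap $L^2$-orthogonality against an $L^1$ estimate, since the square function is at best weak-$(1,1)$. If one wishes to stay strictly within the pseudo-differential calculus developed here, the same endpoint can be reached by proving a weak-$(1,1)$ bound for the square function operator $v\mapsto\big(\sum_h|\tilde\varphi(h^2P)v|^2\big)^{1/2}$ directly from kernel estimates for the operators $\tilde\varphi(h^2P)$ (read off from the parametrix of Proposition \ref{prop parametrix f compact manifolds}) and then interpolating with the $L^2$ bound by the Marcinkiewicz theorem; both routes are standard Calder\'on--Zygmund theory, and the boundedness constant necessarily degenerates as $q\to\infty$, consistent with the exclusion of $q=\infty$.
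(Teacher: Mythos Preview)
Your argument is correct. The paper does not supply its own proof of this proposition; it simply cites \cite[Corollary 2.3]{BGT}, whose argument is precisely the second route you sketch at the end: one reads off Calder\'on--Zygmund kernel estimates for the operators $\tilde\varphi(h^2P)$ from the parametrix of Proposition~\ref{prop parametrix f compact manifolds}, proves a weak-$(1,1)$ bound for the vector-valued (square function) operator, and interpolates with the trivial $L^2$ orthogonality.

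Your primary route --- duality, Khintchine randomisation, and a Mikhlin-type spectral multiplier theorem for $-\Delta_g$ on $M$ --- is a genuine but mild variation. It trades the vector-valued Calder\'on--Zygmund argument for a scalar one (the multiplier theorem), which is arguably cleaner conceptually and imports a result that is useful elsewhere, but it does require invoking an external multiplier theorem (Seeger--Sogge, or the heat-kernel/finite-propagation-speed versions you mention) rather than staying within the parametrix machinery already built in the paper. The \cite{BGT} route is more self-contained relative to the tools developed here. In depth and difficulty the two are equivalent, and you correctly identify the nontrivial input in each case.
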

\subsection{Reduction of problem}
In this subsection, we firstly show how to get Corollary $\ref{coro strichartz estimates wave compact no boundary}$ from Theorem $\ref{theorem strichartz estimate compact no boundary}$ and then give a reduction of Theorem $\ref{theorem strichartz estimate compact no boundary}$.  
\paragraph{Proof of Corollary $\ref{coro strichartz estimates wave compact no boundary}$} Since we are working on compact manifolds without boundary, it is well-known that there exists  an orthonormal basis $(e_j)_{j \in \N}$ of $L^2(M):=L^2(M,d\text{vol}_g)$ of $C^\infty$ functions on $M$ such that
\[
\Lambda_g^\sigma e_j = \lambda_j^\sigma e_j,
\]
with $0 \leq \lambda_0 \leq \lambda_1 \leq \lambda_2 \leq \cdots, \quad \lim_{j\rightarrow \infty} \lambda_j = +\infty$. For any $f$ a piecewise continuous function, the functional $f(\Lambda_g)$ is defined as 
\[
f(\Lambda_g)u:= \sum_{j \in \N} f(\lambda_j) u_j e_j.
\]
If we set $j_0: = \dim (\ker \Lambda_g^\sigma)$, then $\lambda_0 = \lambda
_1=\cdots= \lambda_{j_0-1}=0$ and $\lambda_j \geq \lambda_{j_0}>0$ for $j\geq j_0$. Here the number $j_0$ stands for the number of connected components of $M$ and the corresponding eigenfunctions $(e_j)_{j=0}^{j_0-1}$ are constant functions. We now define the projection on $\ker(\Lambda_g^\sigma)$ by
\[
\Pi_0 u := \sum_{j< j_0} u_j e_j, \quad \text{where } u_j:=\scal{e_j, u}_{L^2(M)}= \int_{M} \overline{e_j(x)} u(x)d\text{vol}_g(x).
\]
By the Duhamel formula, the equation $(\ref{linear fractional wave equation compact no boundary})$ can be written as
\[
v(t)= \cos (t\Lambda_g^\sigma) v_0 + \frac{\sin (t\Lambda_g^\sigma)}{\Lambda_g^\sigma}v_1 + \int_{0}^{t} \frac{\sin ((t-s)\Lambda_g^\sigma)}{\Lambda_g^\sigma} G(s)ds. 
\]
We remark that the only problem may happen on $\ker(\Lambda_g^\sigma)$ of $\frac{\sin (t\Lambda_g^\sigma)}{\Lambda_g^\sigma}$. But it is not the case because
\[
\Pi_0\frac{\sin (t\Lambda_g^\sigma)}{\Lambda_g^\sigma} v_1 = \sum_{j<j_0} \frac{\sin (t\lambda_j^\sigma)}{\lambda_j^\sigma} v_{1,j} e_j=\sum_{j<j_0} t\frac{\sin (t\lambda_j^\sigma)}{t\lambda_j^\sigma} v_{1,j} e_j =t\sum_{j<j_0} v_{1,j} e_j=t \Pi_0 v_1.
\]
Since $\ker(\Lambda_g^\sigma)$ is generated by constant functions, the local in time Strichartz estimates of $\Pi_0 v$, namely $\|\Pi_0 v\|_{L^p(I,L^q(M))}$ with $I$ a bounded interval, can be controlled by any Sobolev norms of data. Therefore, we only need to study the local in time Strichartz of $v$ away from $\ker(\Lambda_g^\sigma)$. Using the fact that
\[
\cos (t\Lambda_g^\sigma)= \frac{e^{it\Lambda_g^\sigma} + e^{-it\Lambda_g^\sigma}}{2}, \quad \sin (t\Lambda_g^\sigma)= \frac{e^{it\Lambda_g^\sigma} - e^{-it\Lambda_g^\sigma}}{2i},
\]
the Strichartz estimates $(\ref{homogeneous strichartz fractional wave with source})$ follow directly from the ones of $e^{\pm it\Lambda_g^\sigma}$ as in $(\ref{homogeneous strichartz fractional schrodinger with source})$. This gives Corollary $\ref{coro strichartz estimates wave compact no boundary}$. \defendproof \newline 
\indent We now prove Theorem $\ref{theorem strichartz estimate compact no boundary}$. To do so, we have the following reduction.
\begin{prop} \label{prop reduction strichartz fractional schrodinger compact no boundary}
Consider $(M,g)$ a smooth compact Riemannian manifold of dimension $d \geq 1$. Let $\sigma\in (0,\infty)\backslash\{1\}$ and $\varphi \in C^\infty_0(\R \backslash \{0\})$. If there exists $t_0>0$ small enough and $C>0$  such that for all $u_0 \in L^1(M)$ and all $h \in (0,1]$,
\begin{align}
\|e^{ith^{-1}(h\Lambda_g)^\sigma} \varphi(h^2P)u_0\|_{L^\infty(M)} \leq Ch^{-d}(1+|t|h^{-1})^{-d/2} \|u_0\|_{L^1(M)}, \label{dispersive fractional schrodinger on compact manifolds without boundary}
\end{align}
for all $t\in [-t_0, t_0]$, then $\emph{Theorem } \ref{theorem strichartz estimate compact no boundary}$ holds true.
\end{prop}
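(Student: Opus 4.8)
The plan is to repeat, essentially verbatim, the argument of Proposition~\ref{prop reduction of theorem local strichartz fractional schrodinger on manifolds}, now invoking the functional calculus, the $L^q(M)\to L^r(M)$ bounds and the Littlewood--Paley decomposition on $M$ (Proposition~\ref{prop littlewood paley decomposition compact no boundary}) in place of their $\R^d$ counterparts. First I would note that $e^{ith^{-1}(h\Lambda_g)^\sigma}=e^{ith^{\sigma-1}\Lambda_g^\sigma}$ is unitary on $L^2(M)$ and commutes with $\varphi(h^2P)$ (spectral theorem), so the map $T(t):=\mathds{1}_{[-t_0,t_0]}(t)\,e^{ith^{-1}(h\Lambda_g)^\sigma}\varphi(h^2P)$ meets the hypotheses of Theorem~\ref{theorem TTstar criterion} with $\delta=d$, $\tau=d/2$: the $L^2(M)$ bound is unitarity, and the $L^1(M)\to L^\infty(M)$ bound for $T(t)T(s)^\star=\mathds{1}_{[-t_0,t_0]}(t)\mathds{1}_{[-t_0,t_0]}(s)\,e^{i(t-s)h^{-1}(h\Lambda_g)^\sigma}\varphi(h^2P)^2$ is $(\ref{dispersive fractional schrodinger on compact manifolds without boundary})$ (applied with $\varphi^2$ in place of $\varphi$). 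This yields, for every fractional admissible $(p,q)$,
\[
\|e^{ith^{-1}(h\Lambda_g)^\sigma}\varphi(h^2P)u_0\|_{L^p([-t_0,t_0],L^q(M))}\le Ch^{-(d/2-d/q-1/p)}\|u_0\|_{L^2(M)}.
\]
Rescaling time via $s=th^{\sigma-1}$ produces a factor $h^{(\sigma-1)/p}$, and since $d/2-d/q-1/p-(\sigma-1)/p=\gamma_{pq}$, the group law and unitarity of $e^{it\Lambda_g^\sigma}$ (exactly as on $\R^d$) give, on any interval $I_h$ of length $2h^{\sigma-1}$,
\[
\|e^{it\Lambda_g^\sigma}\varphi(h^2P)u_0\|_{L^p(I_h,L^q(M))}\le Ch^{-\gamma_{pq}}\|u_0\|_{L^2(M)}.
\]

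Next, for $\sigma\in(1,\infty)$ I would cover the bounded interval $I$ by $N\lesssim h^{1-\sigma}$ intervals of length $2h^{\sigma-1}$ and cumulate, picking up $N^{1/p}\lesssim h^{-(\sigma-1)/p}$ and hence the loss $h^{-\gamma_{pq}-(\sigma-1)/p}$; for $\sigma\in(0,1)$ one has $h^{\sigma-1}\ge 1$, so the estimate on an interval of size $1$ is already contained in the one of size $h^{\sigma-1}$ and the loss is just $h^{-\gamma_{pq}}$. Inserting $\tilde\varphi(h^2P)$ with $\tilde\varphi\in C^\infty_0(\R\backslash\{0\})$ equal to $1$ near $\text{supp}(\varphi)$ lets one replace $\|u_0\|_{L^2(M)}$ by $\|\varphi(h^2P)u_0\|_{L^2(M)}$. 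Applying Proposition~\ref{prop littlewood paley decomposition compact no boundary} to $u=e^{it\Lambda_g^\sigma}u_0$, then Minkowski's inequality in the dyadic sum, the dyadic bound just obtained, and almost orthogonality, together with the trivial estimate $\|e^{it\Lambda_g^\sigma}u_0\|_{L^p(I,L^2(M))}\le |I|^{1/p}\|u_0\|_{L^2(M)}$ for the low-frequency piece (here the boundedness of $I$ and the finiteness of $\text{vol}_g(M)$ are used), gives $(\ref{homogeneous strichartz estimate compact no boundary})$ for $\sigma>1$ and its analogue with $\gamma_{pq}$ for $\sigma\in(0,1)$; one uses $\gamma_{pq}+(\sigma-1)/p\ge 1/p>0$ (resp. $\gamma_{pq}>0$) so that the Sobolev weights dominate the $L^2(M)$ term.

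Finally, for the inhomogeneous bound $(\ref{homogeneous strichartz fractional schrodinger with source})$ I would use the Duhamel representation $u(t)=e^{it\Lambda_g^\sigma}u_0-i\int_0^t e^{i(t-s)\Lambda_g^\sigma}F(s)\,ds$, apply Minkowski's integral inequality in $s$, and bound each $\|e^{i(t-s)\Lambda_g^\sigma}F(s)\|_{L^p(I,L^q(M))}$ by $(\ref{homogeneous strichartz estimate compact no boundary})$; integrating in $s$ over $I$ produces the $\|F\|_{L^1(I,H^{\gamma_{pq}+(\sigma-1)/p}(M))}$ term, with no Christ--Kiselev argument needed since the source lies in $L^1_t$ (the $\sigma\in(0,1)$ case is identical with $\gamma_{pq}$). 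The whole reduction is soft; within its scope the only delicate points are the cumulating step — which is precisely what forces the loss $(\sigma-1)/p$ and crucially exploits the boundedness of $I$ — and absorbing the low-frequency $L^2(M)$ remainder of the Littlewood--Paley estimate, both handled as above. The genuine analytic difficulty, namely establishing the dispersive bound $(\ref{dispersive fractional schrodinger on compact manifolds without boundary})$ itself through the local WKB construction, lies outside this proposition.
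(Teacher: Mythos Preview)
Your proposal is correct and follows essentially the same approach as the paper: the paper's own proof simply says that the homogeneous estimate is obtained exactly as in Proposition~\ref{prop reduction of theorem local strichartz fractional schrodinger on manifolds} (with the compact-manifold Littlewood--Paley square function in place of the $\R^d$ one) and then derives the inhomogeneous bound $(\ref{homogeneous strichartz fractional schrodinger with source})$ from the homogeneous one via Duhamel and Minkowski's inequality, precisely as you outline. Your write-up is in fact more detailed than the paper's, which refers back to the earlier proposition for everything but the Minkowski step; the only superfluous remark is the appeal to the finiteness of $\text{vol}_g(M)$, which is not needed since unitarity alone gives $\|e^{it\Lambda_g^\sigma}u_0\|_{L^p(I,L^2(M))}\le |I|^{1/p}\|u_0\|_{L^2(M)}$.
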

\begin{proof} 
The proof of homogeneous Strichartz estimates follows similarly to the one given in Proposition $\ref{prop reduction of theorem local strichartz fractional schrodinger on manifolds}$. We only give the proof of $(\ref{homogeneous strichartz fractional schrodinger with source})$, i.e. $\sigma \in (1,\infty)$, the one for $\sigma \in (0,1)$ is completely similar. 
The homogeneous part follows from $(\ref{homogeneous strichartz estimate compact no boundary})$. It remains to prove
\begin{align}
\Big\|\int_{0}^{t} e^{i(t-s)\Lambda_g^\sigma} F(s)ds\Big\|_{L^p(I, L^q(M))} \leq C \|F\|_{L^1(I,H^{\gamma_{pq}+(\sigma-1)/p}(M))}. \label{reduction homogeneous strichartz fractional schrodinger compact no boundary}
\end{align}
The estimate $(\ref{reduction homogeneous strichartz fractional schrodinger compact no boundary})$ follows easily from $(\ref{homogeneous strichartz estimate compact no boundary})$ and the Minkowski inequality (see \cite{BGT}, Corollary 2.10). Indeed, the left hand side reads
\begin{align*}
\Big\| \int_I \mathds{1}_{[0,t]}(s) e^{i(t-s)\Lambda_g^\sigma} F(s)ds\Big\|_{L^p(I, L^q(M))} &\leq \int_I \|\mathds{1}_{[0,t]}(s) e^{i(t-s)\Lambda_g^\sigma} F(s)\|_{L^p(I,L^q(M))} ds \nonumber \\
&\leq  \int_I \|e^{i(t-s)\Lambda_g^\sigma} F(s)\|_{L^p(I,L^q(M))} ds \\
&\leq C \int_I \|F(s)\|_{H^{\gamma_{pq}+(\sigma-1)/p}(M)} ds. \nonumber
\end{align*}
This gives $(\ref{reduction homogeneous strichartz fractional schrodinger compact no boundary})$ and the proof of Proposition $\ref{prop reduction strichartz fractional schrodinger compact no boundary}$ is complete.
\end{proof}
\subsection{Dispersive estimates}
This subsection devotes to prove the dispersive estimates $(\ref{dispersive fractional schrodinger on compact manifolds without boundary})$. 
Again thanks to the localization $\varphi$, we can replace $(h\Lambda_g)^\sigma$ by $\psi(h^2P)$ where $\psi(\lambda)= \tilde{\varphi}(\lambda)\sqrt{\lambda}^{\sigma}$ with $\tilde{\varphi}\in C^\infty_0(\R\backslash \{0\})$ such that $\tilde{\varphi}=1$ near $\text{supp}(\varphi)$. The partition of unity allows us to consider only on a local coordinates, namely $\sum_{\kappa} e^{ith^{-1}\psi(h^2P)} \varphi(h^2P)\phi_\kappa$. 
By using the same argument as in Remark $\ref{rem dispersive estimate wkb}$ and Proposition $\ref{prop parametrix f compact manifolds}$, the study of $e^{ith^{-1} \psi(h^2P)} \varphi(h^2P) \phi_\kappa$ is reduced to the one of $e^{ith^{-1}\psi(h^2P)} \tilde{\phi}_\kappa Op_h^\kappa (a_\kappa) \phi_\kappa$ with $a_\kappa \in S(-\infty)$ and $\text{supp}(a_\kappa) \subset \text{supp}(\varphi\circ p_\kappa)$. Let us set 
\[
u(t)=e^{ith^{-1}\psi(h^2P)} \tilde{\phi}_\kappa Op_h^\kappa (a_\kappa) \phi_\kappa u_0.
\]
We see that $u$ solves the following semi-classical evolution equation
\begin{align}
\left\{
\begin{array}{ccc}
(hD_t - \psi(h^2P)) u(t) &=& 0, \\
u_{\vert t=0} &=& \tilde{\phi}_\kappa Op^\kappa_h(a_\kappa)\phi_\kappa u_0. 
\end{array}
\right.
\label{evolution equation}
\end{align}
The WKB method allows us to construct an approximation of the solution to $(\ref{evolution equation})$ in finite time independent of $h$. To do so, we firstly choose $\phi_\kappa', \tilde{\phi}_\kappa', \tilde{\tilde{\phi}}_\kappa' \in C^\infty_0(U_\kappa)$ such that $\phi_\kappa' =1$ near $\text{supp}(\tilde{\tilde{\phi}}_\kappa)$ (see Proposition $\ref{prop parametrix of P}$ for $\tilde{\tilde{\phi}}_\kappa$), $\tilde{\phi}_\kappa'=1$ near $\text{supp}(\phi_\kappa')$ and $\tilde{\tilde{\phi}}_\kappa'=1$ near $\text{supp}(\tilde{\phi}_\kappa')$. Proposition $\ref{prop parametrix f compact manifolds}$ then implies 
\begin{align}
\psi(h^2P) \phi_\kappa' = \tilde{\phi}_\kappa' Op^\kappa_h(b_\kappa(h)) \phi_\kappa' + h^N  R'_{\kappa,N}(h), \label{parametrix of psi compact manifold}
\end{align}
where $b_\kappa(h)=\sum_{l=1}^{N-1} h^l b_{\kappa,l}$ with $b_{\kappa,l} \in S(-\infty)$ and $R'_{\kappa,N}(h)=O_{\Lc(L^2(M))}(1)$. By using the global extension operator defined in $(\ref{global extending operator})$, we can apply the construction of the WKB approximation given in Subsection $\ref{subsection wkb approximation}$ and find $t_0>0$ small enough, a function $S_\kappa \in C^\infty([-t_0,t_0]\times \R^{2d})$ and a sequence $a_{\kappa,j}(t,\cdot,\cdot) \in S(-\infty)$ satisfying $\text{supp}(a_{\kappa,j}(t,\cdot,\cdot)) \subset p^{-1}(J)$ (see $(\ref{pricipal symbol of global extension P})$ for the definition of $p$) for some small neighborhood $J$ of $\text{supp}(\varphi)$ not containing the origin uniformly in $t\in [-t_0,t_0]$ such that 
\begin{align}
(hD_t - Op_h(b_\kappa(h))) J_{\kappa,N}(t) = R_{\kappa,N}(t), \label{application of wkb on Rd}
\end{align}
where 
\[
J_{\kappa,N}(t) :=\sum_{j=0}^{N-1}h^j J_h(S_\kappa(t),a_{\kappa,j}(t)), \quad J_N(0)=Op_h(a_\kappa),
\]
satisfying for all $t \in  [-t_0,t_0]$ and all $(x,\xi)\in p^{-1}(J)$,
\begin{align}
|\partial^\alpha_x \partial^\beta_\xi (S_\kappa(t,x,\xi)-x\cdot\xi)| &\leq C_{\alpha\beta} |t|, \quad |\alpha+\beta|\geq 1, \label{estimates phase wkb compact manifold 1} \\
\Big|\partial^\alpha_x \partial^\beta_\xi (S_\kappa(t,x,\xi) - x\cdot\xi + t\sqrt{p(x,\xi)}^\sigma)\Big| &\leq C_{\alpha\beta} |t|^2, \label{estimates phase wkb compact manifold 2}
\end{align}
and for all $h \in (0,1]$,
\begin{align}
\|J_{\kappa,N}(t)\|_{\Lc(L^1(\R^d),L^\infty(\R^d))} &\leq C h^{-d}(1+|t|h^{-1})^{-d/2}, \label{dispersive estimates application on compact} \\
R_{\kappa,N}(t) &= O_{\Lc(L^2(\R^d))}(h^{N+1}). \label{estimate remainder on compact}
\end{align}
Next, we need the following micro-local finite propagation speed.
\begin{lem}\label{lem pseudo local}
Let $\sigma \in (0,\infty)\backslash \{1\}$, $\chi, \tilde{\chi} \in C^\infty_0(\R^d)$ such that $\tilde{\chi}=1$ near $\emph{supp}(\chi)$, $a(t) \in S(-\infty)$ with $\emph{supp}(a(t,\cdot,\cdot)) \subset p^{-1}(J)$ uniformly in $t\in [-t_0,t_0]$ and $S\in C^\infty([-t_0,t_0]\times \R^{2d})$ satisfy $(\ref{estimates phase wkb compact manifold 2})$ for all $t\in [-t_0,t_0]$ and all $(x,\xi)\in p^{-1}(J)$. Then for $t_0>0$ small enough, 
\[
J_h(S(t),a(t))\chi = \tilde{\chi} J_h(S(t),a(t))\chi + \tilde{R}(t),
\]
where $\tilde{R}(t)= O_{\Lc(L^2(\R^d))}(h^\infty)$. 
\end{lem}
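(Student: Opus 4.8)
The plan is to reduce the claim to a pointwise bound on the Schwartz kernel of $\tilde R(t):=(1-\tilde\chi)\,J_h(S(t),a(t))\,\chi$ via a non-stationary phase argument. That kernel is
\[
K(t,x,y)=(2\pi h)^{-d}\int_{\R^d} e^{ih^{-1}(S(t,x,\xi)-y\cdot\xi)}\,(1-\tilde\chi(x))\,a(t,x,\xi)\,\chi(y)\,d\xi,
\]
an absolutely convergent oscillatory integral whose amplitude is supported in $\{y\in\text{supp}\chi\}$ and, because $\text{supp}(a(t,\cdot,\cdot))\subset p^{-1}(J)$ with $p$ elliptic, is also compactly supported in $\xi$, uniformly in $t\in[-t_0,t_0]$. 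It then suffices to prove that for every $N$ there is $C_N>0$ with $|K(t,x,y)|\le C_N h^{N-d}\langle x-y\rangle^{-N}$ for all $t\in[-t_0,t_0]$ and all $x,y\in\R^d$: Schur's test with $N>d$ gives $\|\tilde R(t)\|_{\Lc(L^2(\R^d))}\le C_N h^{N-d}$, and since $N$ is arbitrary, $\tilde R(t)=O_{\Lc(L^2(\R^d))}(h^\infty)$ uniformly in $t$.

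The heart of the matter is non-stationarity of the phase $\Phi(t,x,y,\xi):=S(t,x,\xi)-y\cdot\xi$ on the support of the amplitude, with $|\nabla_\xi\Phi|\gtrsim\langle x-y\rangle$. First, since $\tilde\chi=1$ on a neighbourhood of the compact set $\text{supp}\chi$, the distance $\delta:=\text{dist}(\text{supp}(1-\tilde\chi),\text{supp}\chi)$ is positive, so $|x-y|\ge\delta$ whenever $(1-\tilde\chi(x))\chi(y)\ne 0$. Second, $(\ref{estimates phase wkb compact manifold 2})$ lets me write $S(t,x,\xi)=x\cdot\xi-t\sqrt{p(x,\xi)}^{\,\sigma}+O(|t|^2)$ with all $\xi$-derivatives controlled, and since $\sqrt{p(x,\xi)}^{\,\sigma}$ and its $\xi$-derivatives are bounded on $p^{-1}(J)$ (ellipticity and boundedness of the metric together with $J$ staying away from the origin), I get $|\nabla_\xi S(t,x,\xi)-x|\le C|t|$ and $|\partial_\xi^\beta S(t,x,\xi)|\le C|t|$ for $|\beta|\ge 2$, uniformly on $p^{-1}(J)$. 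Choosing $t_0$ so small that $Ct_0\le\delta/2$, I obtain, on the support,
\[
|\nabla_\xi\Phi(t,x,y,\xi)|=|\nabla_\xi S(t,x,\xi)-y|\ge|x-y|-C|t|\ge\tfrac12|x-y|\ \gtrsim\ \langle x-y\rangle,
\]
while $|\partial_\xi^\beta\Phi|\le C|t|\le Ct_0$ for $|\beta|\ge 2$.

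Given these bounds, the kernel estimate is the standard non-stationary phase lemma: integrating by parts $N$ times with the operator $L:=\tfrac{h}{i}\,|\nabla_\xi\Phi|^{-2}\,\nabla_\xi\Phi\cdot\nabla_\xi$, which satisfies $L\,e^{ih^{-1}\Phi}=e^{ih^{-1}\Phi}$, one rewrites $K$ with $(L^t)^N$ falling on the amplitude; each application of $L^t$ either differentiates the amplitude (harmless, since $(a(t))_{t}$ is bounded in $S(-\infty)$) or produces a factor $\partial_\xi^2\Phi\,|\nabla_\xi\Phi|^{-2}=O(\langle x-y\rangle^{-2})$, so that one gains a factor $h\,\langle x-y\rangle^{-1}$ at each step. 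This gives $|K(t,x,y)|\le C_N h^{N-d}\langle x-y\rangle^{-N}\mathbf 1_{|x-y|\ge\delta}$, with $C_N$ independent of $t\in[-t_0,t_0]$ because every bound used above is uniform in $t$.

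I do not anticipate a genuine obstacle: this is a routine micro-local finite-speed-of-propagation estimate. The one point requiring care is that $x$ ranges over the \emph{unbounded} set $\text{supp}(1-\tilde\chi)$, so it does not suffice to gain a power of $h$ per integration by parts — one must also extract the spatial decay $\langle x-y\rangle^{-N}$, without which the $x$-integral in Schur's test would diverge. That decay is exactly what the lower bound $|\nabla_\xi\Phi|\gtrsim\langle x-y\rangle$ provides, which is why $t_0$ has to be chosen small in terms of $\delta$, hence in terms of $\chi$ and $\tilde\chi$. Converting the decaying kernel bound to the $\Lc(L^2)$-bound via Schur's test is then immediate.
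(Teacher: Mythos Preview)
Your proof is correct and follows essentially the same route as the paper: both arguments write out the kernel of $(1-\tilde\chi)J_h(S(t),a(t))\chi$, use the separation of supports together with the phase expansion $(\ref{estimates phase wkb compact manifold 2})$ to obtain $|\nabla_\xi\Phi|\gtrsim 1+|x-y|$, apply non-stationary phase to get $|K_h(t,x,y)|\le C_N h^{N-d}(1+|x-y|)^{-N}$, and conclude by Schur's lemma. The only cosmetic difference is that the paper first performs the change of variables $\eta=\sqrt{G(x)}\xi$ so that the leading part of the phase reads $\sqrt{g(x)}(x-y)\cdot\eta-t|\eta|^\sigma$, whereas you work directly in $\xi$; your version is if anything a bit cleaner.
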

\begin{proof}
The kernel of $J_h(S(t),a(t))\chi - \tilde{\chi} J_h(S(t),a(t))\chi$ is given by
\[
K_{h}(t,x,y)= (2\pi h)^{-d} \int_{\R^d} e^{ih^{-1}(S(t,x,\xi)-y\xi)} (1-\tilde{\chi})(x) a(t,x,\xi) \chi(y)d\xi.
\]
Using $(\ref{estimates phase wkb compact manifold 2})$, we can write for $t_0>0$ small enough, $t\in [-t_0,t_0]$ and $(x,\xi)\in p^{-1}(J)$,
\[
S(t,x,\xi)-y\xi = (x-y)\xi - t\sqrt{p(x,\xi)}^\sigma + O(t^2).
\]
By change of variables $\eta = \sqrt{G(x)} \xi$ or $\xi = \sqrt{g(x)}\eta$, we have
\[
K_h(t,x,y)= (2\pi h)^{-d} \int_{\R^d} e^{ih^{-1} \Phi(t,x,y,\eta)} (1-\tilde{\chi})(x) a(t,x,\sqrt{g(x)}\eta) \chi(y) \sqrt{\det g(x)} dx,
\]
where $\Phi(t,x,y,\xi)= \sqrt{g(x)}(x-y)\eta - t |\eta|^\sigma + O(t^2)$. Thanks to the support of $\chi$ and $\tilde{\chi}$, we see that $|x-y| \geq C$. This gives for $t_0>0$ small enough that
\[
|\nabla_\eta \Phi(t,x,y,\eta)| = |\sqrt{g(x)}(x-y) - t \sigma \eta |\eta|^{\sigma-2} + O(t^2)| \geq C (1+|x-y|).
\]
Here we also use the fact that $\|\sqrt{g(x)}\|$ is bounded from below and above (see $(\ref{pricipal symbol of global extension P})$). Using the fact that for all $\beta \in \N^d$ satisfying $|\beta| \geq 2$,
\[
|\partial^\beta_\eta \Phi(t,x,y,\eta)| \leq C_\beta,
\]
the non stationary phase theorem implies for all $N \geq 1$, all $t\in [-t_0,t_0]$ and all $x, y \in \R^d$,
\[
|K_h(t,x,y)| \leq C h^{N-d}(1+|x-y|)^{-N}.
\]
The Schur's Lemma gives $\tilde{R}(t)= O_{\Lc(L^2(\R^d))}(h^\infty)$. This ends the proof.
\end{proof}
\paragraph{Proof of dispersive estimates $(\ref{dispersive fractional schrodinger on compact manifolds without boundary})$} With the same spirit as in $(\ref{operator on manifold})$, let us set $J^\kappa_{N}(t) = \kappa^* J_{\kappa,N}(t) \kappa_*, R^\kappa_N(t) = \kappa^* R_{\kappa,N}(t) \kappa_*$ where $J_{\kappa,N}(t)$ and $R_{\kappa,N}(t)$ given in $(\ref{application of wkb on Rd})$. 
The Duhamel formula gives
\begin{align}
u(t)&=e^{ith^{-1} \psi(h^2P)} \tilde{\phi}_\kappa Op^\kappa_h(a_\kappa) \phi_\kappa u_0 \nonumber \\
&= \tilde{\phi}_\kappa J^\kappa_N(t) \phi_\kappa u_0 -ih^{-1} \int_0^t e^{i(t-s) h^{-1} \psi(h^2P)} ( hD_s-\psi(h^2P))\tilde{\phi}_\kappa J^\kappa_N(s)\phi_\kappa u_0 ds. \nonumber 
\end{align}
We aslo have from $(\ref{parametrix of psi compact manifold})$ that
\begin{align*}
(hD_s-\psi(h^2P)) & \tilde{\phi}_\kappa J^\kappa_N(s)\phi_\kappa \\
& = \tilde{\phi}_\kappa hD_s J^\kappa_N(s)\phi_\kappa - \tilde{\phi}_\kappa' Op^\kappa_h(b_\kappa(h)) \tilde{\phi}_\kappa J^\kappa_N(s)\phi_\kappa - h^N R_{\kappa,N}'(h) \tilde{\phi}_\kappa J^\kappa_N(s)\phi_\kappa.
\end{align*}
The micro-local finite propagation speed given in Lemma $\ref{lem pseudo local}$ and $(\ref{application of wkb on Rd})$ imply
\begin{align*}
(hD_s-\psi(h^2P))& \tilde{\phi}_\kappa J^\kappa_N(s)\phi_\kappa \\
&= \tilde{\phi}_\kappa' \kappa^*(hD_s - Op_h(b_\kappa(h)))J_N(s) \kappa_* \phi_\kappa - \tilde{R}_\kappa(s) - h^NR_{\kappa,N}'(h) \tilde{\phi}_\kappa J^\kappa_N(s)\phi_\kappa. \\
& = \tilde{\phi}_\kappa' R^\kappa_{N}(s) \phi_\kappa - \tilde{R}_\kappa(s) - h^NR_{\kappa,N}'(h) \tilde{\phi}_\kappa J^\kappa_N(s)\phi_\kappa,
\end{align*}
where $\tilde{R}_\kappa(s)= O_{\Lc(L^2(M))}(h^\infty)$. Here we also use the $L^2$-boundedness of pseudo-differential operators with symbols in $S(-\infty)$. We then get
\[
u(t)= \tilde{\phi}_\kappa J^\kappa_N(t) \phi_\kappa u_0 + \Rc^\kappa_{N}(t) u_0,
\]
where 
\[
\Rc^\kappa_{N}(t) u_0 = -ih^{-1}\int_{0}^{t} e^{i(t-s)h^{-1}\psi(h^2P)} (\tilde{\phi}_\kappa' R^\kappa_{N}(s) \phi_\kappa - \tilde{R}_\kappa(s) - h^NR_{\kappa,N}'(h) \tilde{\phi}_\kappa J^\kappa_N(s)\phi_\kappa) u_0 ds.
\]
By the same process as in Remark $\ref{rem dispersive estimate wkb}$ using $(\ref{dispersive estimates application on compact})$ and the fact $\Rc^\kappa_{N}(t) = O_{\Lc(L^2(M))}(h^{N-1})$ for all $t\in [-t_0,t_0]$, we obtain
\[
\|e^{ith^{-1} \psi(h^2P)} \varphi(h^2P) \phi_\kappa u_0\|_{L^\infty(M)} \leq C h^{-d} (1+|t|h^{-1})^{-d/2} |u_0\|_{L^1(M)},
\]
for all $t\in [-t_0,t_0]$. The dispersive estimates $(\ref{dispersive fractional schrodinger on compact manifolds without boundary})$ then follow from the above estimates and partition of unity. This completes the proof. \defendproof
\section{Nonlinear applications}
\setcounter{equation}{0}
In this section, we give the proofs of Theorem $\ref{theorem local wellposedness fractional schrodinger subcritical}$ and Corollary $\ref{coro global existence of defocusing NLFS}$ and Theorem $\ref{theorem local wellposedness subcritical wave}$.
\paragraph{Proof of Theorem $\ref{theorem local wellposedness fractional schrodinger subcritical}$} 
We only treat the case $\sigma \in (1,\infty)$ where we have Strichartz estimates with loss of derivatives. The one for $\sigma \in (0,1)$ is similar and essentially given in \cite[Theorem 1.7]{Dinh}. We follow the standard process (see e.g \cite{Ginibre} or \cite[Proposition 3.1]{BGT}) by using the fixed point argument in a suitable Banach space. We firstly choose $p> \max(\nu-1, 4)$ when $d=1$ and $p> \max(\nu-1, 2)$ when $d\geq 2$ such that $\gamma > d/2 - 1/p$ and then choose $q \geq 2$ such that
\[
\frac{2}{p} + \frac{d}{q} \leq  \frac{d}{2}.
\]
Let us consider
\[
X_T := \Big\{ u \in C(I,H^\gamma(M)) \cap L^p(I,H^\alpha_q(M)), \|u\|_{L^\infty(I,H^\gamma(M))} + \|u\|_{L^p(I,H^\gamma_q(M))} \leq N \Big\}
\]
equipped with the distance
\[
\|u-v\|_{X_T} := \|u-v\|_{L^\infty(I,L^2(M))} + \|u-v\|_{L^p(I,H^{-\gamma_{pq}-(\sigma-1)/p}_q(M))},
\]
where $I=[0,T]$, $T, N>0$ will be chosen later and $\alpha=\gamma-\gamma_{pq}-(\sigma-1)/p$. Here $H^\gamma_q(M):=(1-\Delta_g)^{-\gamma/2} L^q(M)$ is the generalized Sobolev space on $M$ and $H^\gamma(M):=H^\gamma_2(M)$. Using the persistence of regularity (see \cite[Theorem 1.25]{Cazenave}), we have $(X_T, \|\cdot\|_{X_T})$ is a complete metric space. By the Duhamel formula, it suffices to prove that the functional 
\begin{align}
\Phi_{u_0}(u)(t) = e^{it\Lambda_g^\sigma} u_0 -i\mu \int_0^t e^{i(t-s)\Lambda_g^\sigma} |u(s)|^{\nu-1} u(s)ds \nonumber
\end{align}
is a contraction on $X_T$. The Strichartz estimates $(\ref{homogeneous strichartz fractional schrodinger with source})$ imply
\[
\|\Phi_{u_0}(u)\|_{L^\infty(I,H^\gamma(M))}+\|\Phi_{u_0}(u)\|_{L^p(I,H^\alpha_q(M))} \lesssim \|u_0\|_{H^\gamma(M)}+ \|F(u)\|_{L^1(I,H^\gamma(M))},
\] 
where $F(u) = -\mu|u|^{\nu-1} u$. Using our assumption on $\nu$ (i.e. $\nu$ is an odd integer or $(\ref{assump smoothness schrodinger})$ otherwise), the fractional derivatives (see e.g. \cite[Appendix]{Kato95}) and H\"older inequality, we have
\begin{align*}
\|F(u)\|_{L^1(I,H^\gamma(M))} &\lesssim  \Big\| \|u(\cdot)\|^{\nu-1}_{L^\infty(M)} \|u(\cdot)\|_{H^\gamma(M)}\Big\|_{L^1(I)} \lesssim \|u\|^{\nu-1}_{L^{\nu-1}(I,L^\infty(M))} \|u\|_{L^\infty(I,H^\gamma(M))} \nonumber \\ 
&\lesssim  T^{1-\frac{\nu-1}{p}} \|u\|^{\nu-1}_{L^p(I,L^\infty(M))} \|u\|_{L^\infty(I,H^\gamma(M))}. 
\end{align*}
Note that by working in local coordinates, the fractional derivatives on compact manifold are reduced to the ones on $\R^d$. Similarly, using the fact that for all $z,\zeta \in \C$,
\begin{align}
|F(z)-F(\zeta)| \lesssim |z-\zeta| (|z|^{\nu-1} + |\zeta|^{\nu-1}), \label{estimate of F}
\end{align}
we have
\begin{align}
\|F(u)-F(v)\|_{L^1(I,L^2(M))} &\lesssim \Big(\|u\|^{\nu-1}_{L^{\nu-1}(I, L^\infty(M))} + \|v\|^{\nu-1}_{L^{\nu-1}(I, L^\infty(M))} \Big) \|u-v\|_{L^\infty(I,L^2(M))} \nonumber \\
&\lesssim T^{1-\frac{\nu-1}{p}} \Big(\|u\|^{\nu-1}_{L^p(I,L^\infty(M))} + \|v\|^{\nu-1}_{L^p(I,L^\infty(M))} \Big) \|u-v\|_{L^\infty(I, L^2(M))}. \nonumber
\end{align}
The Sobolev embedding with $\alpha > d/q$ implies $L^p(I,H^\alpha_q(M))\subset L^p(I,L^\infty(M))$. Thus, 
\begin{multline*}
\|\Phi_{u_0}(u)\|_{L^\infty(I,H^\gamma(M))} +\|\Phi_{u_0}(u)\|_{L^p(I,H^\alpha_q(M))} \\
\lesssim \|u_0\|_{H^\gamma(M)} + T^{1-\frac{\nu-1}{p}} \|u\|^{\nu-1}_{L^p(I,H^\alpha_q(M))} \|u\|_{L^\infty(I,H^\gamma(M))},
\end{multline*}
and
\begin{align}
\|\Phi_{u_0}(u)-\Phi_{u_0}(v)\|&_{L^\infty(I,L^2(M))}+\|\Phi_{u_0}(u)-\Phi_{u_0}(v)\|_{L^p(I,H^{-\gamma_{pq}-(\sigma-1)/p}_q(M))} \nonumber \\
&\lesssim T^{1-\frac{\nu-1}{p}} \Big(\|u\|^{\nu-1}_{L^p(I,L^\infty(M))} + \|v\|^{\nu-1}_{L^p(I,L^\infty(M))} \Big)\|u-v\|_{L^\infty(I, L^2(M))} \label{uniqueness schrodinger} \\
&\lesssim T^{1-\frac{\nu-1}{p}} \Big(\|u\|^{\nu-1}_{L^p(I,H^\alpha_q(M))} + \|v\|^{\nu-1}_{L^p(I,H^\alpha_q(M))} \Big)\|u-v\|_{L^\infty(I, L^2(M))}. \nonumber
\end{align}
This implies for all $u, v\in X_T$, there exists $C>0$ independent of $u_0\in H^\gamma(M)$ such that
\begin{align*}
\|\Phi_{u_0}(u)\|_{L^\infty(I, H^\gamma(M))}+ \|\Phi_{u_0}(u)\|_{L^p(I,H^\alpha_q(M))} \leq C\|u_0\|_{H^\gamma(M)} + CT^{1-\frac{\nu-1}{p}} N^\nu, 
\end{align*}
and 
\[
\|\Phi_{u_0}(u)-\Phi_{u_0}(v)\|_{X_T} \leq C T^{1-\frac{\nu-1}{p}} N^{\nu-1} \|u-v\|_{X_T}.
\]
Thus, if we set $N=2C\|u_0\|_{H^\gamma(M)}$ and choose $T$ small enough so that $CT^{1-\frac{\nu-1}{p}} N^{\nu-1} \leq \frac{1}{2}$, then $X_T$ is stable by $\Phi_{u_0}$ and $\Phi_{u_0}$ is a contraction on $X_T$. The fixed point theorem gives the existence of solution $u \in C(I, H^\gamma(M)) \cap L^p(I, L^\infty(M))$ to (NLFS). It remains to show the uniqueness. Consider $u, v \in C(I, H^\gamma(M))\cap L^p(I, L^\infty(M))$ two solutions of (NLFS). Since the uniqueness is a local property (see also \cite[Chapter 4]{Cazenave}), it suffices to show $u=v$ for $T$ is small. Using $(\ref{uniqueness schrodinger})$, we have
\[
\|u-v\|_{X_T} \leq CT^{1-\frac{\nu-1}{p}} \Big(\|u\|^{\nu-1}_{L^p(I, L^\infty(M))} +\|v\|^{\nu-1}_{L^p(I, L^\infty(M))} \Big)\|u-v\|_{X_T}.
\]
Since $\|u\|_{L^p(I,L^\infty(M))}$ is small if $T$ is small and similarly for $v$, we see that if $T>0$ small enough,
\[
\|u-v\|_{X_T} \leq \frac{1}{2} \|u-v\|_{X_T}.
\]
This completes the proof.\defendproof
\paragraph{Proof of Corollary $\ref{coro global existence of defocusing NLFS}$}
By the assumptions given in Corollary $\ref{coro global existence of defocusing NLFS}$, we apply Theorem $\ref{theorem local wellposedness fractional schrodinger subcritical}$ with $\gamma=\sigma/2$ and see that for all $u_0 \in H^{\sigma/2}(M)$, there exist $T>0$ and a unique solution $u \in C([0,T], H^{\sigma/2}(M)) \cap L^p([0,T], L^\infty(M))$ to the defocusing (NLFS). Note that the time $T$ depends only on $\|u_0\|_{H^{\sigma/2}(M)}$. Moreover, by a classical approximation argument, the following quantities are conserved for $u_0 \in H^{\sigma/2}(M)$,
\begin{align}
\|u(t)\|^2_{L^2(M)} &= M(u_0), \nonumber \\
\frac{1}{2}\|\Lambda_g^{\sigma/2}u(t)\|^2_{L^2(M)} + \frac{1}{\nu+1}\|u(t)\|^{\nu+1}_{L^{\nu+1}(M)} &= E(u_0). \nonumber 
\end{align}
This shows that $\|u(t)\|_{H^{\sigma/2}(M)}$ remains bounded for all $t$ in the existence domain. Thus we can apply Theorem $\ref{theorem local wellposedness fractional schrodinger subcritical}$ again with the initial data starting at $T$ and obtain a unique solution $u \in C([0,2T], H^{\sigma/2}(M)) \cap L^p([0,2T],L^\infty(M))$. By repeating this process, we extend the solution for positive times. Similarly, the same result holds for negative times. This ends the proof. \defendproof
\paragraph{Proof of Theorem $\ref{theorem local wellposedness subcritical wave}$} 
The proof is very close to the one of Theorem $\ref{theorem local wellposedness fractional schrodinger subcritical}$. We only consider the case $\sigma \in (1,\infty)$, the one for $\sigma \in (0,1)$ is similar (see also \cite[Theorem 1.13]{Dinh}). Let $(p,q)$ and $\alpha$ be as in the proof of Theorem $\ref{theorem local wellposedness fractional schrodinger subcritical}$. We will solve (NLFW) in
\begin{multline}
Y_T := \Big\{ v \in C(I,H^\gamma(M)) \cap C^1(I, H^{\gamma-\sigma}(M)) \cap L^p(I,H^\alpha_q(M)), \Big.\\
\Big. \|[v]\|_{L^\infty(I,H^\gamma(M))}+ \|v\|_{L^p(I, H^\alpha_q(M))} \leq N \Big\} \nonumber
\end{multline}
equipped with the distance
\[
\|v-w\|_{Y_T} := \|[v-w]\|_{L^\infty(I,L^2(M))} + \|v-w\|_{L^p(I,H^{- \gamma_{pq}-(\sigma-1)/p}_q(M))},
\]
where $I=[0,T]$ and $T, N>0$ will be chosen later. Here we denote
\[
\|[v]\|_{L^\infty(I,H^\gamma(M))}= \|v\|_{L^\infty(I,H^\gamma(M))} +\|\partial_t v\|_{L^\infty(I,H^{\gamma-\sigma}(M))}.
\] 
The persistence of regularity implies that $(Y_T, \|\cdot\|_{Y_T})$ is a complete metric space. By the Duhamel formula, it suffices to prove that the functional 
\begin{align}
\Phi_{v_0,v_1}(v)(t)= \cos (t\Lambda_g^\sigma) v_0 + \frac{\sin (t\Lambda_g^\sigma)}{\Lambda_g^\sigma} v_1 - \mu \int_0^t \frac{\sin((t-s)\Lambda_g^\sigma)}{\Lambda_g^\sigma}|v(s)|^{\nu-1}v(s)ds \label{duhamel formula wave}
\end{align}
is a contraction on $Y_T$. The local Strichartz estimates $(\ref{homogeneous strichartz fractional wave with source})$ imply
\begin{align*}
\|\left[\Phi_{v_0,v_1}(v)\right]\|_{L^\infty(I,H^\gamma(M))}+ \|\Phi_{v_0,v_1}(v)\|_{L^p(I, H^\alpha_q(M))} & \lesssim \|[v](0)\|_{H^\gamma(M)}+ \|F(v)\|_{L^1(I,H^{\gamma-\sigma}(M))} \\
&\lesssim \|[v](0)\|_{H^\gamma(M)}+ \|F(v)\|_{L^1(I,H^{\gamma}(M))}. 
\end{align*}
As in the proof of Theorem $\ref{theorem local wellposedness fractional schrodinger subcritical}$, the fractional derivatives with the assumption on $\nu$ given in Theorem $\ref{theorem local wellposedness subcritical wave}$, the H\"older inequality imply
\[
\|F(v)\|_{L^1(I,H^\gamma(M))} \lesssim  T^{1-\frac{\nu-1}{p}} \|v\|^{\nu-1}_{L^p(I,L^\infty(M))} \|v\|_{L^\infty(I,H^\gamma(M))}.
\]
Similarly, using $(\ref{estimate of F})$, we have
\[
\|F(v)-F(w)\|_{L^1(I,L^2(M))} \lesssim  T^{1-\frac{\nu-1}{p}} \Big(\|v\|^{\nu-1}_{L^p(I,L^\infty(M))} + \|w\|^{\nu-1}_{L^p(I,L^\infty(M))}\Big) \|u-v\|_{L^\infty(I,L^2(M))}.
\]
The Sobolev embedding $L^p(I,H^\alpha_q(M)) \subset L^p(I, L^\infty(M))$ then implies that
\begin{multline*}
\|\left[\Phi_{v_0,v_1}(v)\right]\|_{L^\infty(I,H^\gamma(M))}+ \|\Phi_{v_0,v_1}(v)\|_{L^p(I, H^\alpha_q(M))}  \\
\lesssim \|[v](0)\|_{H^\gamma(M)}+  T^{1-\frac{\nu-1}{p}} \|v\|^{\nu-1}_{L^p(I,H^\alpha_q(M))} \|v\|_{L^\infty(I,H^\gamma(M))},
\end{multline*}
and 
\begin{align}
\|\Phi_{v_0,v_1}(v)-\Phi_{v_0,v_1}(w)\|_{Y_T} & \lesssim T^{1-\frac{\nu-1}{p}} \Big(\|v\|^{\nu-1}_{L^p(I,L^\infty(M))} + \|w\|^{\nu-1}_{L^p(I,L^\infty(M))}\Big) \|u-v\|_{Y_T} \label{uniqueness wave} \\
&\lesssim T^{1-\frac{\nu-1}{p}} \Big(\|v\|^{\nu-1}_{L^p(I,H^\alpha_q(M))} + \|w\|^{\nu-1}_{L^p(I,H^\alpha_q(M))}\Big) \|u-v\|_{Y_T}. \nonumber
\end{align}
Therefore, for all $v, w \in Y_T$, there exists a constant $C>0$ independent of $v_0,v_1$ such that 
\[
\|\left[\Phi_{v_0,v_1}(v)\right]\|_{L^\infty(I,H^\gamma(M))}+ \|\Phi_{v_0,v_1}(v)\|_{L^p(I, H^\alpha_q(M))} \leq C\|[v](0)\|_{H^\gamma(M)}+  CT^{1-\frac{\nu-1}{p}} N^\nu,
\]
and 
\[
\|\Phi_{v_0,v_1}(v)-\Phi_{v_0,v_1}(w)\|_{Y_T} \leq C T^{1-\frac{\nu-1}{p}} N^{\nu-1} \|u-v\|_{Y_T}.
\]
Setting $N=2C \|[v](0)\|_{H^\gamma(M)}$ and choosing $T>0$ small enough so that $CT^{1-\frac{\nu-1}{p}} N^{\nu-1} \leq \frac{1}{2}$, we see that $Y_T$ is stable by $\Phi_{v_0,v_1}$ and $\Phi_{v_0,v_1}$ is a contraction on $Y_T$. By the fixed point theorem, there exists a unique solution $v \in Y_T$ to (NLFW). The uniqueness of solution $v \in C(I, H^\gamma(M)) \cap C^1(I, H^{\gamma-\sigma}(M)) \cap L^p(I, L^\infty(M))$ follows as in the proof of Theorem $\ref{theorem local wellposedness fractional schrodinger subcritical}$ using $(\ref{uniqueness wave})$. \defendproof
\appendix
\renewcommand*{\thesection}{\Alph{section}}
\section{Hamilton-Jacobi equation} \label{section hamilton jacobi}
\setcounter{equation}{0}
In this appendix, we will recall the standard Hamilton-Jacobi equation (see e.g. \cite[Th\'eor\`eme IV.14]{Robert}). Let us consider the following Hamilton-Jacobi equation 
\begin{align}\label{hamilton jacobi standard}
\left\{
\begin{array}{ccc}
\partial_t S(t,x,\xi) + H(x,\nabla_xS(t,x,\xi)) &=& 0, \\
S(0,x,\xi)&=& x\cdot \xi,
\end{array}
\right.
\end{align}
where $H \in C^\infty(\R^{2d})$ satisfies that for all $\alpha,\beta \in \N^d$, there exists $C_{\alpha\beta}>0$ such that for all $x,\xi \in \R^d$,
\begin{align} \label{asumption H}
|\partial^\alpha_x \partial^\beta_\xi H(x,\xi) | \leq C_{\alpha\beta}.
\end{align}
The Hamiltonian flow associated to $H$ is denoted by $\Phi_H(t,x,\xi):=(X(t,x,\xi),\Xi(t,x,\xi))$ where
\[
\left\{
\begin{array}{ccc}
\dot{X}(t) &=& \nabla_\xi H(X(t),\Xi(t)), \\
\dot{\Xi}(t) &=& -\nabla_x H(X(t),\Xi(t)),
\end{array}
\right.
\text{ and }
\left\{
\begin{array}{ccc}
X(0) &=& x, \\
\Xi(0) &=& \xi.
\end{array}
\right.
\]
Let us start with the following bound on derivatives of the Hamiltonian flow.
\begin{lem}\label{lem estimate flow}
Let $t_0 \geq 0$ and $\alpha,\beta\in \N^d$ be such that $|\alpha|+|\beta| \geq 1$. Then there exists $C_{\alpha\beta t_0} > 0$ such that for all $t\in [-t_0,t_0]$ and all $(x,\xi) \in \R^{2d}$,
\[
| \partial^\alpha_x \partial^\beta_\xi (\Phi_H(t,x,\xi)- (x,\xi) | \leq C_{\alpha\beta t_0} |t|.
\]
\end{lem}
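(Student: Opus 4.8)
The plan is to proceed by induction on $|\alpha+\beta|$, starting from the integral form of the flow equations. First I would write the flow as
\[
\Phi_H(t,x,\xi) - (x,\xi) = \int_0^t \mathcal{V}(\Phi_H(s,x,\xi))\, ds, \qquad \mathcal{V} := (\nabla_\xi H, -\nabla_x H),
\]
so that $\mathcal{V}$ is a smooth vector field all of whose derivatives are bounded, by hypothesis $(\ref{asumption H})$. Differentiating in $(x,\xi)$ and using the chain rule, $\partial^\alpha_x\partial^\beta_\xi\big(\Phi_H(t)-(x,\xi)\big)$ equals $\int_0^t$ of a sum of terms, each of which is a derivative of $\mathcal{V}$ evaluated at $\Phi_H(s)$ multiplied by a product of lower-or-equal-order derivatives of $\Phi_H(s)$; the top-order term in this Fa\`a di Bruno expansion is $(\partial\mathcal{V})(\Phi_H(s))\cdot \partial^\alpha_x\partial^\beta_\xi \Phi_H(s)$, and all other terms involve only derivatives of $\Phi_H$ of order $\le |\alpha+\beta|-1$.

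For the base case $|\alpha+\beta|=1$, the equation for $w(s):=\partial^\alpha_x\partial^\beta_\xi\big(\Phi_H(s)-(x,\xi)\big)$ reads $w(t) = \int_0^t (\partial\mathcal{V})(\Phi_H(s))\,[\,w(s)+e\,]\,ds$ where $e$ is a constant unit vector coming from differentiating the linear term $(x,\xi)$; since $\|\partial\mathcal{V}\|_{L^\infty}\le C$, Gr\"onwall's inequality gives $|w(t)|\le C|t|e^{C|t|}\le C_{t_0}|t|$ for $|t|\le t_0$. For the inductive step, assume the bound $|\partial^{\alpha'}_x\partial^{\beta'}_\xi(\Phi_H(s)-(x,\xi))|\le C|s|$, hence also $|\partial^{\alpha'}_x\partial^{\beta'}_\xi \Phi_H(s)|\le C$ (absorbing the identity), for all multi-indices with $1\le|\alpha'+\beta'|\le|\alpha+\beta|-1$ and all $|s|\le t_0$. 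Then in the integral identity for $w(s)=\partial^\alpha_x\partial^\beta_\xi(\Phi_H(s)-(x,\xi))$, every term except the top-order one is bounded by a constant (product of bounded derivatives of $\mathcal{V}$ and of $\Phi_H$), so $|w(t)|\le \int_0^t \big(C|w(s)| + C\big)\,ds$; Gr\"onwall again yields $|w(t)|\le C_{\alpha\beta t_0}|t|$ on $[-t_0,t_0]$. Note that no smallness of $t_0$ is needed here — the constant is simply allowed to depend on $t_0$ — because the hypotheses on $H$ are global.

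The main obstacle, such as it is, is purely bookkeeping: organizing the Fa\`a di Bruno expansion of $\partial^\alpha_x\partial^\beta_\xi\big(\mathcal{V}\circ\Phi_H\big)$ cleanly enough to see that every term other than the single top-order term $(\partial\mathcal{V})(\Phi_H)\cdot\partial^\alpha_x\partial^\beta_\xi\Phi_H$ is a product in which each factor is a derivative of $\Phi_H$ of strictly lower order, so that the inductive hypothesis applies and produces a bounded integrand. Once that combinatorial structure is in hand, the analytic input is just the uniform boundedness of all derivatives of $\mathcal{V}$ together with Gr\"onwall's lemma, both routine. (This is exactly the ``same trick'' invoked later in the paper for the estimate $(\ref{estimate vector field})$ on the vector field $Z$.)
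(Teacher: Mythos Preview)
Your proposal is correct and follows essentially the same approach as the paper: differentiate the flow equation, apply Gr\"onwall for first-order derivatives, and then induct on $|\alpha+\beta|$ using Gr\"onwall again. The only cosmetic difference is that the paper works in differential form with the full Jacobian matrix $Z(t)$ satisfying $\dot Z = A(t)Z$, whereas you use the equivalent integral form and treat the partial derivatives one at a time; your write-up of the inductive step (via Fa\`a di Bruno) is in fact more detailed than the paper's, which simply says ``take the derivative of $(\ref{derivative flow})$ and apply again the Gronwall inequality.''
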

\begin{proof}
The proof is essentially given in \cite[Lemme IV.9]{Robert}. We assume first $|\alpha+\beta|=1$ and denote
\[
Z(t)= \left( \begin{array}{cc}
\nabla_x X(t) & \nabla_\xi X(t) \\
\nabla_x \Xi(t) & \nabla_\xi \Xi(t)
\end{array}\right).
\]
By direct computation, we have
\begin{align} \label{derivative flow}
\frac{d}{dt} Z(t) = A(t) Z(t),
\end{align}
where 
\[
A(t) = \left( 
\begin{array}{cc}
\nabla_x \nabla_\xi H(X(t),\Xi(t)) & \nabla^2_\xi H(X(t),\Xi(t)) \\
-\nabla^2_x H(X(t),\Xi(t)) & -\nabla_\xi \nabla_x H(X(t),\Xi(t))
\end{array}
\right).
\]
This implies that
\[
\|Z(t)-I_{\R^{2d}}\| \leq \int_{0}^{t} \|A(s)\| \|Z(s)\| ds \leq N|t| +\int_{0}^{t} N \|Z(s)-I_{\R^{2d}}\| ds,
\]
where $N := \sup_{(t,x,\xi)\in [-t_0,t_0] \times \R^{2d}} \|A(t)\|$. Here $\|\cdot\|$ is the $\R^{2d\times 2d}$-matrix norm. Using Gronwall inequality, we have
\[
\|Z(t)-I_{\R^{2d}}\| \leq N|t| e^{Nt} \leq N e^{Nt_0} |t|.
\]
For $|\alpha+\beta| \geq 2$, we take the derivative of $(\ref{derivative flow})$ and apply again the Gronwall inequality.
\end{proof}
\begin{lem} \label{lem existence small time}
There exists $t_0>0$ small enough such that for all $t\in [-t_0,t_0]$ and all $\xi\in \R^d$, the map $x \mapsto X(t,x,\xi)$ is a diffeomorphism from $\R^d$ onto itself. Moreover, if we denote $x \mapsto Y(t,x,\xi)$ the inverse map, then for all $t\in [-t_0,t_0]$ and all $\alpha,\beta \in \N^d$ satisfying $|\alpha+\beta|\geq 1$, there exists $C_{\alpha\beta}>0$ such that for all $x,\xi \in \R^d$,
\begin{align}
| \partial^\alpha_x \partial^\beta_\xi (Y(t,x,\xi) - x) | \leq C_{\alpha\beta} |t|. \nonumber
\end{align}
\end{lem}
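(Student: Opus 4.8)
The plan is to derive both assertions from Lemma~\ref{lem estimate flow}. Write $X(t,x,\xi)=x+R(t,x,\xi)$; since $\partial^\alpha_x\partial^\beta_\xi R=\partial^\alpha_x\partial^\beta_\xi(X-x)$, Lemma~\ref{lem estimate flow} gives, for every $\alpha,\beta$ with $|\alpha+\beta|\ge 1$, a constant $C_{\alpha\beta}$ with $|\partial^\alpha_x\partial^\beta_\xi R(t,x,\xi)|\le C_{\alpha\beta}|t|$ uniformly in $(x,\xi)\in\R^{2d}$. First I would fix $t_0>0$ so small that $C_{\alpha\beta}t_0\le \tfrac12$ for the finitely many multi-indices used below; in particular, for $|t|\le t_0$ and every $\xi$ the map $x\mapsto R(t,x,\xi)$ is $\tfrac12$-Lipschitz and $\nabla_xX(t,x,\xi)=I+\nabla_xR(t,x,\xi)$ satisfies $\|\nabla_xX(t,x,\xi)-I\|\le\tfrac12$, hence is invertible for all $x,\xi$.

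For the diffeomorphism property, fix $t\in[-t_0,t_0]$, $\xi\in\R^d$ and set $\Psi:=X(t,\cdot,\xi)=\mathrm{Id}+R(t,\cdot,\xi)$. Injectivity: $\Psi(x_1)=\Psi(x_2)$ forces $|x_1-x_2|=|R(t,x_2,\xi)-R(t,x_1,\xi)|\le\tfrac12|x_1-x_2|$. Surjectivity: for given $y$, the map $x\mapsto y-R(t,x,\xi)$ is a $\tfrac12$-contraction of $\R^d$ whose unique fixed point solves $\Psi(x)=y$. Since $D\Psi=\nabla_xX(t,\cdot,\xi)$ is everywhere invertible, the inverse $Y(t,\cdot,\xi):=\Psi^{-1}$ is smooth by the inverse function theorem; joint smoothness of $(t,x,\xi)\mapsto Y(t,x,\xi)$ follows from the implicit function theorem applied to $X(t,Y,\xi)-x=0$, whose differential in $Y$ is the invertible matrix $\nabla_xX(t,Y,\xi)$.

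It remains to estimate $\partial^\alpha_x\partial^\beta_\xi(Y-x)$, which I would do by induction on $n=|\alpha+\beta|$ from the identity $X(t,Y(t,x,\xi),\xi)=x$. For $n=1$: differentiating in $x_j$ gives $\nabla_xX(t,Y,\xi)\,\partial_{x_j}Y=e_j$, whence $\nabla_xY=[\nabla_xX(t,Y,\xi)]^{-1}=(I+O(t))^{-1}=I+O(t)$; differentiating in $\xi_k$ gives $\nabla_xX(t,Y,\xi)\,\partial_{\xi_k}Y+(\partial_{\xi_k}X)(t,Y,\xi)=0$, and $(\partial_{\xi_k}X)(t,Y,\xi)=O(t)$ by Lemma~\ref{lem estimate flow}, so $\partial_{\xi_k}Y=O(t)$; in both cases $\partial^\alpha_x\partial^\beta_\xi(Y-x)=O(t)$. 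For $n\ge2$, assume $\partial^{\alpha'}_x\partial^{\beta'}_\xi(Y-x)=O(t)$ (so $\partial^{\alpha'}_x\partial^{\beta'}_\xi Y$ is bounded) for all $1\le|\alpha'+\beta'|\le n-1$, apply $\partial^\alpha_x\partial^\beta_\xi$ to $X(t,Y(t,x,\xi),\xi)=x$ (the right side differentiates to $0$ since $n\ge2$), and expand the left side by the multivariate Fa\`{a} di Bruno formula. The unique term of the expansion that is not manifestly $O(t)$ is the one coming from the one-block partition, namely $\nabla_xX(t,Y,\xi)\,\partial^\alpha_x\partial^\beta_\xi Y=(I+O(t))\,\partial^\alpha_x\partial^\beta_\xi Y$; every other term carries a factor $D^kX$ with $k\ge2$ — the number of derivatives of $X$ equals the number of blocks — which is $O(t)$ by Lemma~\ref{lem estimate flow}, multiplied by derivatives of $Y$ (and the trivial derivatives of the $\xi$-variable) of orders in $\{1,\dots,n-1\}$, bounded by the induction hypothesis. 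Hence $(I+O(t))\,\partial^\alpha_x\partial^\beta_\xi Y=O(t)$, and inverting $I+O(t)$ gives $\partial^\alpha_x\partial^\beta_\xi(Y-x)=\partial^\alpha_x\partial^\beta_\xi Y=O(t)$, closing the induction.

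I expect the only delicate point — a mild bookkeeping matter rather than a genuine difficulty — to be that last Fa\`{a} di Bruno step: one must check that the single-block partition reproduces exactly the top-order term with the invertible coefficient $\nabla_xX(t,Y,\xi)=I+O(t)$ (using that for $n\ge2$ the $\xi$-component $\partial^\alpha_x\partial^\beta_\xi\xi$ vanishes, so $\nabla_\xi X$ does not contribute to the top-order term), and that every multi-block partition necessarily contains a higher derivative of $X$, which Lemma~\ref{lem estimate flow} makes $O(t)$. Everything else is routine.
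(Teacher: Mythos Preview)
Your proposal is correct and follows essentially the same route as the paper: use Lemma~\ref{lem estimate flow} to make $\nabla_x X(t,\cdot,\xi)$ close to the identity, deduce the global diffeomorphism, then differentiate the identity $X(t,Y(t,x,\xi),\xi)=x$ and run an induction on $|\alpha+\beta|$. The only cosmetic differences are that the paper invokes Hadamard's global inversion theorem where you give the explicit contraction/fixed-point argument, and the paper leaves the higher-order induction as a one-line remark where you spell out the Fa\`a di Bruno bookkeeping.
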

\begin{proof}
By Lemma $\ref{lem estimate flow}$ , there exists $t_0>0$ small enough such that
\[
\|\nabla_x X(t) - I_{\R^d}\| \leq \frac{1}{2},
\]
for all $t\in [-t_0,t_0]$. By Hadamard global inversion theorem, the map $x\mapsto X(t,x,\xi)$ is a diffeomorphism from $\R^d$ onto itself. Let $x\mapsto Y(t,x,\xi)$ be its inverse. By taking derivative $\partial^\alpha_x \partial^\beta_\xi$ with $|\alpha+\beta|=1$ of the following equality
\begin{align}
x=X(t,Y(t,x,\xi),\xi), \label{composition of X Y}
\end{align}
we have
\[
(\nabla_x X)(t,Y(t,x,\xi),\xi) \partial^\alpha_x \partial^\beta_\xi (Y(t,x,\xi)- x) = - \partial^\alpha_y\partial^\beta_\eta(X(t,y,\eta)-y)|_{(y,\eta)=(Y(t,x,\xi),\xi)}.
\]
By choosing $t_0$ small enough, we see that the matrix $(\partial_xX)(t,Y(t,x,\xi),\xi)$ is invertible and its inverse is bounded uniformly in $t\in[-t_0,t_0]$ and $x,\xi \in \R^d$. This implies that
\[
|\partial^\alpha_x \partial^\beta_\xi (Y(t,x,\xi)-x)| \leq C |\partial^\alpha_y\partial^\beta_\eta(X(t,y,\eta)-y)| \leq C_{\alpha\beta}|t|.
\]
For higher derivatives, we differentiate $(\ref{composition of X Y})$ and use an induction on $|\alpha+\beta|$. This completes the proof.
\end{proof}
Now, we are able to solve the Hamilton-Jacobi equation $(\ref{hamilton jacobi standard})$ and have the following result.
\begin{prop} \label{prop solve hamilton jacobi standard}
Let $t_0$ be as in $\emph{Lemma } \ref{lem existence small time}$. Then there exists a unique function $S \in C^\infty([-t_0,t_0] \times \R^{2d})$ such that $S$ solves the Hamilton-Jacobi equation $(\ref{hamilton jacobi standard})$. The solution $S$ is given by
\begin{align} \label{define phase standard}
S(t,x,\xi)= Y(t,x,\xi)\cdot \xi + \int_{0}^{t} (\xi \cdot \nabla_\xi H -H)\circ \Phi_H(s,Y(t,x,\xi),\xi) ds,
\end{align}
and $S$ satisfies
\begin{align} 
\nabla_\xi S(t)=Y(t),\quad \nabla_xS(t)= \Xi(t,Y(t),\xi), \quad \Phi_H(t, \nabla_\xi S(t),\xi)=(x,\nabla_xS(t)), \label{property of phase standard}
\end{align}
where $S(t):=S(t,x,\xi)$ and $Y(t):=Y(t,x,\xi)$. Moreover, for all $\alpha,\beta\in \N^d$, there exists $C_{\alpha\beta}>0$ such that for all $t \in [-t_0,t_0]$ and all $x,\xi \in \R^d$,
\begin{align}
|\partial^\alpha_x \partial^\beta_\xi \left(S(t,x,\xi)-x\cdot \xi \right)| &\leq C_{\alpha\beta}|t|,\quad |\alpha+\beta|\geq 1, \label{estimate phase wkb standard 1} \\
|\partial^\alpha_x \partial^\beta_\xi \left(S(t,x,\xi)-x\cdot \xi+tH(x,\xi)\right)| &\leq C_{\alpha\beta} |t|^2. \label{estimate phase wkb standard 2}
\end{align}
\end{prop}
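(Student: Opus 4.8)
\noindent\textit{Proof proposal.} The plan is to solve $(\ref{hamilton jacobi standard})$ by the method of characteristics: the Hamiltonian flow $\Phi_H(t,\cdot,\cdot)=(X(t,\cdot,\cdot),\Xi(t,\cdot,\cdot))$ and the diffeomorphism $x\mapsto Y(t,x,\xi)$ furnished by Lemma~$\ref{lem existence small time}$ are the building blocks, and it then remains only to verify that the explicit formula $(\ref{define phase standard})$ does the job. First I would introduce the auxiliary function
\[
\widetilde S(t,y,\xi):=y\cdot\xi+\int_0^t\big(\xi\cdot\nabla_\xi H-H\big)\circ\Phi_H(s,y,\xi)\,ds,
\]
so that $S(t,x,\xi)=\widetilde S(t,Y(t,x,\xi),\xi)$, and differentiate $\widetilde S$ directly. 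Recognizing the integrand as the Lagrangian $\Xi(s)\cdot\dot X(s)-H(\Phi_H(s,y,\xi))$ and using $\dot X=\nabla_\xi H(X,\Xi)$, $\dot\Xi=-\nabla_x H(X,\Xi)$, the $\partial_{y_j}$- and $\partial_{\xi_j}$-derivatives of the integrand collapse to total derivatives $\tfrac{d}{ds}\big(\Xi\cdot\partial_{y_j}X\big)$, resp. $\tfrac{d}{ds}\big(\Xi\cdot\partial_{\xi_j}X\big)$, which integrate to boundary terms; together with $X(0,y,\xi)=y$, $\Xi(0,y,\xi)=\xi$ this gives $(\nabla_y\widetilde S)_j=\Xi(t,y,\xi)\cdot\partial_{y_j}X(t,y,\xi)$, $(\nabla_\xi\widetilde S)_j=y_j+\Xi(t,y,\xi)\cdot\partial_{\xi_j}X(t,y,\xi)$, and $\partial_t\widetilde S=\Xi(t,y,\xi)\cdot\dot X(t,y,\xi)-H(\Phi_H(t,y,\xi))$.

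Next I would set $y=Y(t,x,\xi)$ and exploit $X(t,Y(t,x,\xi),\xi)=x$ together with the identities obtained by differentiating it in $x$, $\xi$ and $t$, namely $\nabla_yX|_{y=Y}\,\nabla_xY=I$, $\nabla_yX|_{y=Y}\,\nabla_\xi Y+\nabla_\xi X|_{y=Y}=0$ and $\nabla_yX|_{y=Y}\,\partial_tY+\dot X|_{y=Y}=0$. Plugging these into the chain rule produces exactly the cancellations yielding $(\ref{property of phase standard})$: $\nabla_\xi S(t)=Y(t)$, $\nabla_xS(t)=\Xi(t,Y(t),\xi)$, and consequently $\partial_tS(t,x,\xi)=-H(x,\nabla_xS(t,x,\xi))$; the orbit relation $\Phi_H(t,\nabla_\xi S(t),\xi)=(X(t,Y(t),\xi),\Xi(t,Y(t),\xi))=(x,\nabla_xS(t))$ is then immediate, while $S(0,x,\xi)=Y(0,x,\xi)\cdot\xi=x\cdot\xi$ and the $C^\infty$ regularity of $S$ is inherited from that of $\Phi_H$, $H$ and $Y$. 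For uniqueness I would use the standard bicharacteristic argument: if $S'$ is any $C^1$ solution of $(\ref{hamilton jacobi standard})$, then differentiating the equation shows that the curves $s\mapsto(X(s,y,\xi),\nabla_xS'(s,X(s,y,\xi),\xi))$ solve Hamilton's system with data $(y,\xi)$, hence equal $\Phi_H(s,y,\xi)$, and then $\tfrac{d}{ds}\big(S'(s,X(s,y,\xi),\xi)\big)=(\xi\cdot\nabla_\xi H-H)\circ\Phi_H(s,y,\xi)$ integrates to $S'(t,X(t,y,\xi),\xi)=S(t,X(t,y,\xi),\xi)$; since $y\mapsto X(t,y,\xi)$ is onto for $|t|\le t_0$, this forces $S'=S$.

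Finally, for $(\ref{estimate phase wkb standard 1})$--$(\ref{estimate phase wkb standard 2})$ the idea is to reduce everything to Taylor expansion in $t$ of quantities whose $(t,x,\xi)$-derivatives are bounded uniformly on $[-t_0,t_0]\times\R^{2d}$. The bounds $|\partial^\alpha_x\partial^\beta_\xi(\Phi_H(t,x,\xi)-(x,\xi))|\le C_{\alpha\beta}|t|$ from Lemma~$\ref{lem estimate flow}$ and $|\partial^\alpha_x\partial^\beta_\xi(Y(t,x,\xi)-x)|\le C_{\alpha\beta}|t|$ from Lemma~$\ref{lem existence small time}$, together with the chain rule and assumption $(\ref{asumption H})$ on $H$, show that every mixed $(t,x,\xi)$-derivative of $\Phi_H$, of $Y$, and hence of $S$, is bounded on $[-t_0,t_0]\times\R^{2d}$. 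For $(\ref{estimate phase wkb standard 1})$, since $\nabla_\xi(S-x\cdot\xi)=Y-x$ and $\nabla_x(S-x\cdot\xi)=\Xi(t,Y,\xi)-\xi$ both vanish at $t=0$, the fundamental theorem of calculus in $t$ yields the $O(|t|)$ bound for all their derivatives, hence for $S-x\cdot\xi$ itself. For $(\ref{estimate phase wkb standard 2})$, from $\partial_tS=-H(x,\nabla_xS)$ and $\nabla_xS(0,x,\xi)=\xi$ one has $\partial_tS(0,x,\xi)=-H(x,\xi)$, so $S(t,x,\xi)-x\cdot\xi+tH(x,\xi)=\int_0^t(t-s)\,\partial_t^2S(s,x,\xi)\,ds$; differentiating $\partial_t^2S=\nabla_\xi H(x,\nabla_xS)\cdot\nabla_x\big(H(x,\nabla_xS)\big)$ and invoking the uniform bounds above gives the $O(|t|^2)$ estimate. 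The main obstacle will be organizing the variation-of-action computation and the chain rule through $Y$ so that the cancellations in $(\ref{property of phase standard})$ emerge cleanly; verifying the uniform-in-$(x,\xi)$ boundedness of all the derivatives needed for the Taylor arguments is more bookkeeping than genuine difficulty.
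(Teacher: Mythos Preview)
Your proposal is correct and follows essentially the same route as the paper: the method of characteristics for the construction and properties $(\ref{property of phase standard})$ (which the paper simply cites from \cite{Robert}), and a Taylor expansion in $t$ for $(\ref{estimate phase wkb standard 1})$--$(\ref{estimate phase wkb standard 2})$, reducing to the bounds of Lemmas~$\ref{lem estimate flow}$ and~$\ref{lem existence small time}$ together with assumption $(\ref{asumption H})$. The only cosmetic difference is that the paper uses energy conservation to rewrite $\partial_tS=-H(Y(t),\xi)$ before expanding, whereas you keep the form $\partial_tS=-H(x,\nabla_xS)$; both lead to the same estimates.
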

\begin{proof}
It is well-known (see \cite[Th\'eor\`eme IV.14]{Robert}) that the function $S$ defined in $(\ref{define phase standard})$ is the unique solution to $(\ref{hamilton jacobi standard})$ and satisfies $(\ref{property of phase standard})$. It remains to prove $(\ref{estimate phase wkb standard 1})$ and $(\ref{estimate phase wkb standard 2})$. By $(\ref{property of phase standard})$ and the conservation of energy, we have
\[
H(x,\nabla_xS(t)) = H\circ \Phi_H(t,\nabla_\xi S(t),\xi) = H(\nabla_\xi S(t),\xi)=H(Y(t),\xi).
\]
This implies that 
\[
S(t,x,\xi)-x\cdot \xi = t\int_0^1 \partial_tS(\theta t,x,\xi)d\theta = - t\int_0^1 H(Y(\theta t,x,\xi),\xi)d\theta.
\]
Using $(\ref{asumption H})$ and Lemma $\ref{lem existence small time}$, we have $(\ref{estimate phase wkb standard 1})$. Next, we compute
\begin{align}
\partial^2_tS(t)&= -\partial_t \left[H(Y(t),\xi)\right] = -(\nabla_xH)(Y(t),\xi) \cdot \partial_t Y(t) \nonumber \\
&= -(\nabla_xH)(Y(t),\xi) \cdot  \nabla_\xi \left[ \partial_t S(t)  \right] =-(\nabla_xH)(Y(t),\xi) \cdot \nabla_\xi \left[ -H(Y(t),\xi)  \right] \nonumber 
\\
&= (\nabla_x H)^2(Y(t),\xi) \cdot \nabla_\xi Y(t)+ (\nabla_x H \cdot \nabla_\xi H)(Y(t),\xi). \label{second derivative of phase wkb}
\end{align}
The Taylor formula gives
\begin{multline}
S(t,x,\xi)= 
 x\cdot \xi - tH(x,\xi) \\ + t^2\int_{0}^{1} (1-\theta) \left[ (\nabla_x H)^2(Y(\theta t),\xi) \cdot \nabla_\xi Y(\theta t)+ (\nabla_x H \cdot \nabla_\xi H)(Y(\theta t),\xi) \right] d\theta. \nonumber
\end{multline}
Using again $(\ref{asumption H})$ and Lemma $\ref{lem existence small time}$, we have $(\ref{estimate phase wkb standard 2})$.
\end{proof}
\section*{Acknowledgments}
\addcontentsline{toc}{section}{Acknowledments}
The author would like to express his deep thanks to his wife-Uyen Cong for her encouragement and support. He also would like to thank his supervisor Prof. Jean-Marc BOUCLET for the kind guidance and constant encouragement. He also would like to thank the reviewers for their helpful comments and suggestions, which helped improve the manuscript.


{\sc Institut de Math\'ematiques de Toulouse, Universit\'e Toulouse III Paul Sabatier, 31062 Toulouse Cedex 9, France.} \\
\indent Email: \href{mailto:dinhvan.duong@math.univ-toulouse.fr}{dinhvan.duong@math.univ-toulouse.fr}
\end{document}